\title{
  Conditional distributions for the nested Dirichlet process via sequential
  imputation
}
\author{
  Evan Donald\\
  University of Central Florida
  \and
  Jason Swanson\\
  University of Central Florida
}
\date{}
\newtheorem{thm}{Theorem}[section]
\newtheorem{cor}[thm]{Corollary}
\newtheorem{prop}[thm]{Proposition}
\newtheorem{assum}[thm]{Assumption}
\theoremstyle{remark}
\newtheorem{rmk}[thm]{Remark}
\numberwithin{equation}{section}
\DeclareMathOperator{\Bet}{Beta}
\DeclareMathOperator{\Dir}{Dir}
\DeclareMathOperator{\Gam}{Gamma}
\DeclareMathOperator{\Gamer}{Gamer}
\DeclareMathOperator{\Var}{Var}
\def\l{\ell}
\def\ol{\overline}
\def\wh{\widehat}
\def\wt{\widetilde}
\def\al{\alpha}
\def\be{\beta}
\def\ga{\gamma}
\def\Ga{\Gamma}
\def\de{\delta}
\def\De{\Delta}
\def\ep{\varepsilon}
\def\th{\theta}
\def\vth{\vartheta}
\def\ka{\kappa}
\def\la{\lambda}
\def\vpi{\varpi}
\def\vrho{\varrho}
\def\si{\sigma}
\def\ph{\varphi}
\def\Om{\Omega}
\def\bmeta{\bm{\eta}}
\def\bmmu{\bm{\mu}}
\def\bmnu{\bm{\nu}}
\def\bmth{\bm{\th}}
\def\bmp{\bm{p}}
\def\bmu{\bm u}
\def\bmX{{\bm X}}
\def\bmx{{\bm x}}
\def\bmY{{\bm Y}}
\def\bmy{{\bm y}}
\def\bmZ{{\bm Z}}
\def\bmz{{\bm z}}
\def\bN{\mathbb{N}}
\def\bR{\mathbb{R}}
\def\cD{\mathcal{D}}
\def\cF{\mathcal{F}}
\def\cG{\mathcal{G}}
\def\cL{\mathcal{L}}
\def\cS{\mathcal{S}}
\def\cT{\mathcal{T}}
\def\fm{\mathfrak{m}}
\def\fn{\mathfrak{n}}
\begin{document}

\maketitle

\begin{abstract}

We consider an array of random variables, taking values in a complete and
separable metric space, that exhibits a kind of symmetry which we call row
exchangeability. Given such an array, a natural model for Bayesian nonparametric
inference is the nested Dirichlet process (NDP). Exactly determining posterior
distributions for the NDP is infeasible, since the computations involved grow
exponentially with the sample size. In this paper, we present a new approach to
determining these posterior distributions that involves the use of sequential
imputation.

\bigskip

\noindent{\bf AMS subject classifications:} Primary 62G05;
secondary 60G57, 62D10, 62M20

\smallskip

\noindent{\bf Keywords and phrases:} exchangeability, Dirichlet processes,
Bayesian inference, importance sampling, sequential imputation

\end{abstract}

\section{Introduction}\label{S:intro}

\subsection{Motivation and main objective}

Consider a situation in which there are several agents, all from the same
population. Each agent undertakes a sequence of actions. These actions are
chosen according to the agent's particular tendencies. Although different agents
have different tendencies, there may be patterns in the population.

We observe a certain set of agents over a certain amount of time. Based on these
observations, we want to make probabilistic forecasts about two things:
\begin{itemize}
  \item The future behavior of the agents we have observed.
  \item The behavior of a new (unobserved) agent from the population.
\end{itemize}
Let $S$ denote the set of possible actions the agents may take. We assume that
$S$ is a complete and separable metric space. Let $\xi_{ij}$ denote the $j$th
action by the $i$th agent. We assume that $\{\xi_{\si(i),\tau_i(j)}\}$ and
$\{\xi_{ij}\}$ have the same finite-dimensional distributions whenever $\si$ and
$\tau_i$ are permutations. We will say that such an array is \emph{row
exchangeable}.

Note that if $\xi = \{\xi_{ij}\}$ is row exchangeable, then for each $i$, the
sequence $\xi_i = \{\xi_{ij}: j \in \bN\}$ is an exchangeable sequence of
$S$-valued random variables, and that the sequence of sequences, $\xi = \{\xi_i:
i \in \bN\}$ is also exchangeable. Let $M_1(S)$ denote the set of probability
measures on $S$. We equip $M_1(S)$ with the Prohorov metric, so that $M_1(S)$ is
also a complete and separable metric space. By de Finetti's theorem, there
exists a sequence of random Borel probability measures $\mu_1, \mu_2, \ldots$ on
$S$ and a random Borel probability measure $\vpi$ on $M_1(S)$ such that
\begin{enumerate}[(i)]
  \item given $\vpi$, the sequence $\mu_1, \mu_2, \ldots$ is i.i.d.\@ with
        distribution $\vpi$, and
  \item for each $i$, given $\mu_i$, the sequence $\xi_{i1}, \xi_{i2}, \ldots$
        is i.i.d.\@ with distribution $\mu_i$.
\end{enumerate}
We call the $\mu_i$ the \emph{row distributions} of the random array $\xi =
\{\xi_{ij}\}$, and we call $\vpi$ the \emph{row distribution generator}.

Our goal is to make inferences about the array $\xi$ based on observations of
some of its entries. We wish for this inference to be both Bayesian and
nonparametric. To facilitate Bayesian inference, we must place a prior
distribution on the random measure $\vpi$. We make the nonparametric choice of
letting $\vpi$ be a Dirichlet process. That is, $\vpi \sim \cD(\ka \rho)$ for
some $\ka > 0$ and some Borel probability measure $\rho$ on $M_1(S)$. To choose
the measure $\rho$, we first observe that
\begin{equation*}
  P(\mu_i \in B) = E[P(\mu_i \in B \mid \vpi)] = E[\vpi(B)] = \rho(B).
\end{equation*}
Hence, the measure $\rho$ is the prior distribution for $\mu_i$. In keeping with
our aim of nonparametric inference, we also let $\rho$ be the distribution of a
Dirichlet process. That is, $\rho = \cD(\ep \vrho)$ for some $\ep > 0$ and some
Borel probability measure $\vrho$ on $S$.

This gives us the model $\vpi \sim \cD(\ka \cD(\ep \vrho))$. In other words, the
process $\vpi$ is what Rodr\'iguez, Dunson, and Gelfand call a nested Dirichlet
process (NDP) in \cite{Rodriguez2008}. In that paper, the authors use a
motivating example in which the agents are different medical centers, and the
actions are the individual patient outcomes produced by these centers.

We call $\ka$ and $\ep$ the column and row concentrations of $\vpi$,
respectively, and we call $\vrho$ the base measure of $\vpi$. We adopt the
following notation:
\begin{equation*}
  X_{in} = \begin{pmatrix}
    \xi_{i1} & \cdots & \xi_{in}
  \end{pmatrix}, \qquad
  \bmX_{mn} = \begin{pmatrix}
    X_{1n} \\ \vdots \\ X_{mn}
  \end{pmatrix}
  = \begin{pmatrix}
    \xi_{11} & \cdots & \xi_{1n} \\
    \vdots & \ddots & \vdots \\
    \xi_{m1} & \cdots & \xi_{mm}
  \end{pmatrix}, \qquad
  \bmmu_m = \begin{pmatrix}
    \mu_1 \\ \vdots \\ \mu_m
  \end{pmatrix}.
\end{equation*}
Our objective is to make inferences about the future of the process $\xi$ based
on past observations. That is, if $M, N, N' \in \bN$ and $N < N'$, then we wish
to compute
\begin{equation}\label{main-goal}
  \cL(\xi_{ij}: i \le M, \; N < j \le N' \mid \bmX_{MN}).
\end{equation}
Here, the notation $\cL(X \mid Y)$ denotes the regular conditional distribution
of $X$ given $Y$.

As demonstrated in \cite{Rodriguez2008}, algorithms based on P\'olya urns are
infeasible in this situation. They require evaluating distributions where the
number of terms grows exponentially with the sample size. For the same reason,
the exact computation of \eqref{main-goal} is infeasible. (The exact computation
in the simplest nontrivial case, $M = 2$ and $S = \{0, 1\}$, is given in
\cite{Berry1979} and takes up a full page.) In \cite{Rodriguez2008}, the authors
deal with the infeasibility of posterior computation by using truncation. Here,
we take a different approach, motivated by the work of Liu and coauthors in
\cite{Kong1994,Liu1996}. In \cite{Liu1996}, Liu considered what is effectively
an NDP with $S = \{0, 1\}$. Using the method of sequential imputation, developed
in \cite{Kong1994}, Liu was able to determine the posterior distributions 
\eqref{main-goal} without truncation. Unfortunately, Liu's proof contains a
fatal flaw. In this paper, we correct that flaw, then generalize the method to
arbitrary $S$.

\subsection{The role of sequential imputation}

To explain how sequential imputation enters into the determination of
\eqref{main-goal}, consider the following. It is straightforward to verify that
\begin{equation*}
  P\left(
    \bigcap_{i = 1}^M \bigcap_{j = N_i + 1}^{O_i} \{\xi_{ij} \in A_{ij}\}
      \; \middle| \; \cG
  \right)
    = E\left[
      \prod_{i = 1}^M \prod_{j = N_i + 1}^{O_i} \mu_i(A_{ij})
        \; \middle| \; \cG
    \right],
\end{equation*}
for all Borel sets $A_{ij} \subseteq S$ and any sub-$\si$-algebra $\cG \subseteq
\si(X_{1 N_1}, X_{2 N_2}, \ldots, X_{M N_M})$. Taking $N_i = N$, $O_i = N'$, and
$\cG = \si(\bmX_{MN})$, we see that \eqref{main-goal} is entirely determined by
$\cL(\bmmu_M \mid \bmX_{MN})$. Note that
\[
  \cL(\bmmu_M \mid \bmX_{MN}; d\nu)
    = \cL(
      \mu_2, \ldots, \mu_M \mid \bmX_{MN}, \mu_1 = \nu_1;
      d\nu_2 \cdots d\nu_M
    ) \, \cL(\mu_1 \mid \bmX_{MN}; d\nu_1).
\]
Iterating this, we see that we can determine $\cL(\bmmu_M \mid \bmX_{MN})$ if we
know
\begin{equation}\label{main-goal2}
  \cL(\mu_m \mid \bmX_{MN}, \bmmu_{m - 1}), \quad 1 \le m \le M.
\end{equation}
In general, $\bmX_{mN}$ and $\{ (\mu_j, X_{jN})\}_{j = m + 1}^M$ are
conditionally independent given $\bmmu_m$. Hence, in \eqref{main-goal2}, the
first $m - 1$ rows of $\bmX_{MN}$ can be omitted. In other words, the posterior
distribution \eqref{main-goal} is entirely determined by the conditional
distributions,
\begin{equation}\label{main-goal3}
  \cL(\mu_m \mid X_{mN}, \ldots, X_{MN}, \bmmu_{m - 1}),
\end{equation}
where $1 \le m \le M$.

When $m < M$, the determination of \eqref{main-goal3} involves conditioning on
more than one row of the array $\xi$. This is exactly the issue that makes
P\'olya-urn-based algorithms and exact computations of \eqref{main-goal}
infeasible. The number of required calculations grows exponentially with the
number of rows. If, instead of \eqref{main-goal3}, we wanted to compute
\begin{equation}\label{1rowConDis}
  \cL(\mu_m \mid X_{mN}, \bmmu_{m - 1}),
\end{equation}
for $1 \le m \le M$, then this is feasible. The problem, then, is to find a way
to use \eqref{1rowConDis} to determine \eqref{main-goal}.

This is where sequential imputation is used. The entire row of data, $\xi_m = 
\{\xi_{mj}\}_{j = 1}^\infty$, is enough to determine $\mu_m$. But in 
\eqref{1rowConDis}, we observe $X_{mN}$, which is only part of this row of data.
Hence, we are faced with a sequence, indexed by $m$, of missing data problems.
This is precisely the situation that sequential imputation is designed to
handle.

\subsection{Other related models}

\subsubsection{The dependent Dirichlet process}

Although the NDP arises naturally when considering row exchangeable arrays, it
is just one of many models used for nonparametric Bayesian inference. In the
NDP, we see that we have a sequence $\mu = \{\mu_i\}_{i = 1}^\infty$ of
dependent Dirichlet processes. The study of dependent Dirichlet processes goes
back at least to \cite{MacEachern1999, MacEachern2000}. Since a Dirichlet
process is almost surely a discrete measure, it is characterized by its atoms
(the countable set of points on which it is supported) and its weights (the
amount of mass it assigns to each atom). In \cite{MacEachern1999,
MacEachern2000}, a sequence of dependent Dirichlet processes is defined by a
specific construction of the weights and atoms. The resulting process has come
to be known by the name, dependent Dirichlet process (DDP). It should be noted,
though, that not every family of dependent Dirichlet processes is a DDP. Our
sequence $\mu$, for instance, is not a DDP, but is rather a variation of the
DDP. In \cite{Barrientos2012}, an alternative, equivalent definition of the DDP
was given in terms of copulas. For a survey of the DDP and related models, see
\cite{Quintana2022}.

Two common categories of DDPs are the single-weights DDPs and the single-atoms
DDPs. In the former, all the Dirichlet processes in the DDP share the same
weights; in the latter, they share the same atoms. The NDP does not fit into
either of these categories. In fact, as long as $\vrho$ is non-atomic, either
$\mu_i = \mu_j$ or $\mu_i$ and $\mu_j$ have no atoms and no weights in common.

\subsubsection{Mixture models}

There are a number of popular variations of the DDP. For instance, in
\cite{Dunson2007}, the authors propose a model which begins with a family of
independent Dirichlet processes. The dependent Dirichlet processes are then
constructed as weighted mixtures of the independent ones, with the dependence
structure determined by the weights. In a slightly different approach,
\cite{Mueller2004} proposes a model in which the dependence is created by a
common underlying Dirichlet process. That is, $\mu_j = c \wt \mu_0 + (1 - c) \wt
\mu_j$, where $\{\wt \mu_j\}_{j = 0}^\infty$ are independent Dirichlet
processes. The parameter $c$ allows for control over the degree of dependence
among the different $\mu_j$. The authors call this a hierarchical Dirichlet
process mixture model.

The hierarchical Dirichlet process mixture of \cite{Mueller2004} should not be
confused with the hierarchical Dirichlet process (HDP), which was introduced in
\cite{Teh2006}. Compared to the mixture processes of \cite{Dunson2007} and
\cite{Mueller2004}, the HDP seems, at least on the surface, to be much more
closely related to the NDP. In fact, though, they are quite different. The HDP
is a Dirichlet process whose base measure is a Dirichlet process. The NDP,
however, is a Dirichlet process whose base measure is \emph{the law of} a
Dirichlet process. If we are not careful about distinguishing between a process
and its law, then we could easily mistake one for the other.

The HDP and NDP, in fact, have different state spaces. The HDP takes values in
$M_1(S)$, whereas the NDP takes values in $M_1(M_1(S))$. For example, if we take
$\ka'$ to be random and $\rho' \sim \cD(\ep \vrho)$, then we can define $\la$ to
be an HDP by using the conditional distribution ${\la \mid \ka', \rho'} \sim
\cD(\ka' \rho')$. Note that $\rho'$ is an $M_1(S)$-valued random variable. In
contrast, for the NDP, we take $\ka$ and $\rho$ to be nonrandom and $\rho =
\cD(\ep \vrho)$. In this case, $\rho$ is a nonrandom element of $M_1(M_1(S))$.
We then define $\vpi$ to be an NDP by the unconditional distribution $\vpi \sim
\cD(\ka \rho)$.

\subsubsection{The infinite relational model}

A model that does bear a close connection to the NDP is the infinite relational
model (IRM), introduced independently in both \cite{Kemp2006} and \cite{Xu2006}.
It is a cluster-based model that can be regarded as a kind of nonparametric
stochastic block model, and is common in the machine learning literature. For a
survey of the IRM and other Bayesian models of exchangeable structures, see
\cite{Orbanz2015}.

We could obtain an IRM from our process $\xi$ by only a slight modification. To
understand this modification, we should clarify that although (i) and (ii) above
are consequences of the row exchangeability of $\xi$, they do not characterize
row exchangeability. For example, an array $\xi$ is said to be separately
exchangeable if $\{\xi_{\si(i), \tau(j)}\}$ and $\{\xi_{ij}\}$ have the same
finite-dimensional distributions whenever $\si$ and $\tau$ are permutations. A
separately exchangeable array also satisfies (i) and (ii). But a row
exchangeable array satisfies
\begin{enumerate}[(i)]
  \setcounter{enumi}{2}
  \item the entries of the array $\{\xi_{ij}\}$ are conditionally i.i.d.\@ given
        $\mu$,
\end{enumerate}
whereas a separately exchangeable array does not. Because of (iii), our process
satisfies $\cL(\bmX_{mn} \mid \mu) = \prod_{i = 1}^m \mu_i^n$, a fact that we
will use again in Section \ref{S:cond1row}. In particular, we have
$\cL(\xi_{11}, \xi_{21} \mid \mu_1, \mu_2) = \mu_1 \times \mu_2$, and this is
true regardless of whether $\mu_1 = \mu_2$ or not.

Now, suppose we modify our process so that whenever $\mu_i = \mu_{i'}$, the rows
$\xi_i$ and $\xi_{i'}$ are identical. That is, $\mu_i = \mu_{i'}$ implies
$\xi_{ij} = \xi_{i'j}$ for all $j$. In this case, the array $\xi$ no longer
satisfies (iii) and is no longer row exchangeable. It is, however, separately
exchangeable. In fact, the array $\xi$, modified in this way, would be an
instance of an IRM called a simple IRM. A general IRM is obtained from a simple
IRM through a process called randomization.

We can apply randomization to any random array, so let use drop any specific
assumptions for now and just let $\xi$ be an arbitrary random array of elements
of $S$. To obtain a randomization of $\xi$, let $(T, \cT)$ be a measurable space
and let $Q$ be a probability kernel from $S$ to $T$. Define $\Xi = \{\Xi_{ij}\}$
so that $\{\Xi_{ij}\}$ is conditionally i.i.d.\@ given $\xi$ and $\Xi_{ij} \mid
\xi \sim Q(\xi_{ij})$. Then the array $\Xi$ is called a $Q$-randomization of
$\xi$. If we apply $Q$-randomization to the simple IRM in the previous
paragraph, then we obtain a general IRM.

The IRM is a model that is used when the columns of $\xi$ correspond to
fundamentally distinct objects that remain fixed from row to row. For instance,
suppose the agents are movie critics, and their $j$th action is to review the
$j$th movie on some fixed list of movies that is shared by all critics. Suppose
we knew the exact reviewing tendencies of the first two critics. That is, we
know $\mu_1$ and $\mu_2$. Even then, if we observe $\xi_{11}$, the first
critic's review of the first movie, then we would learn something about the
first movie. This could potentially affect our probabilities for $\xi_{21}$, the
second critic's review of the first movie. In other words, $\xi_{11}$ and
$\xi_{21}$ would not be conditionally independent given $\mu_1$ and $\mu_2$.
This is distinctly different from the motivating example in
\cite{Rodriguez2008}, where the agents are different medical centers, and the
actions are the individual patient outcomes produced by those centers. In the
movie critic scenario, an NDP is not an appropriate model, whereas an IRM would
be reasonable. In the language of the machine learning literature, the rows,
which represents the agents, exhibit clustering in both scenarios. But only in
the movie critic scenario do the columns exhibit clustering.

\subsection{Outline of paper}

In Section \ref{S:backgrnd}, we give some necessary background information and
establish the notational conventions that we will use throughout the paper. In
Section \ref{S:seqImp}, we give a precise formulation of the method of
sequential imputation as it appears in \cite{Kong1994}. This formulation is
given in Theorem \ref{T:Kong-Liu}. Liu's error in \cite{Liu1996} is to apply
this result to the simple NDP with $S = \{0, 1\}$ despite the fact that the
hypotheses do not hold. To correct this error, we give a new proof under weaker
hypotheses, and present this in Theorem \ref{T:seqImpGen}.

In Section \ref{S:seqImpNDP}, we apply Theorem \ref{T:seqImpGen} to the NDP with
a general state space $S$. Our main results are Theorem \ref{T:seqImpNDP} and
Corollary \ref{C:seqImpNDP}. Finally, in Section \ref{S:examples}, we present
several hypothetical examples to illustrate the use of our main results. See
\url{https://github.com/jason-swanson/ndp} for the code used to generate the
simulations in Section \ref{S:examples}.

\section{Notation and background}\label{S:backgrnd}

\subsection{General notation}

Throughout the paper, we fix a complete and separable metric space $S$. When
needed, we let $\cS$ denote its Borel $\si$-algebra. As noted in the
introduction, we write $M_1 = M_1(S)$ for the set of Borel probability measures
on $S$, and equip $M_1$ with the Prohorov metric so that $M_1$ is itself a
complete and separable metric space.

Let $(T, \cT)$ be a measurable space. Recall that a probability kernel from $T$
to $S$ is a measurable function $\mu: T \to M_1(S)$. If $\mu$ is any function
from $T$ to $M_1(S)$, measurable or not, we write $\mu(t, B)$ for $(\mu(t))(B)$.
Such a function is a kernel if and only if $\mu(\cdot, B)$ is measurable for
each Borel set $B$. Note that a random probability measure on $S$ is a
probability kernel from $\Om$ to $S$. Also, if $\mu$ is a probability kernel
from $T$ to $S$ and $Y$ is a $T$-valued random variables, then $\mu(Y)$ is a
random probability measure on $S$.

Now let $S'$ be another complete and separable metric space. Let $\ga$ be a
probability kernel from $T$ to $S$ and $\ga'$ a probability kernel from $T
\times S$ to $S'$. (We allow the possibility that $T$ is a singleton, in which
case $\ga$ is a probability measure on $S$ and $\ga'$ is a probability kernel
from $S$ to $S'$.) We write $\ga \ga'$ to denote the probability kernel from $T$
to $S \times S'$ characterized by
\begin{equation*}
  (\ga \ga')(y, A \times A') = \int_A \ga'(y, z, A') \, \ga(y, dz).
\end{equation*}
In particular, this means
\[
  \int_{S \times S'} f(z, z') \, (\ga \ga')(y, dz \, dz')
    = \int_S \int_{S'} f(z, z') \, \ga'(y, z, dz') \, \ga(y, dz).
\]
As shorthand for this equation, we write
\[
  (\ga \ga')(y, dz \, dz') = \ga'(y, z, dz') \, \ga(y, dz).
\]
If $T$ is a singleton, then $\ga \ga'$ is a probability measure and $(\ga \ga')
(dz \, dz') = \ga'(z, dz') \, \ga(dz)$.

We write $\cL(X)$ and $\cL(X \mid Y)$ for the distribution of $X$ and the
regular conditional distribution of $X$ given $Y$, respectively. We use
semicolons to indicate evaluation, so that $\cL(X; B) = (\cL(X))(B)$ and $\cL(X
\mid Y; B) = P(X \in B \mid Y)$. We also adopt the usual notation, $X \sim \mu$
and $X \mid Y \sim \mu$, to mean $\cL(X) = \mu$ and $\cL(X \mid Y) = \mu(Y)$,
respectively. In the case $\cL(X \mid Y) = \mu(Y)$, the probability kernel $\mu$
is only determined $\mu(Y)$-a.e. Nonetheless, if a particular $\mu$ has been
fixed, we use the notation $\cL(X \mid Y = y)$ to denote the probability measure
$\mu(y, \cdot)$.

\subsection{Dirichlet processes}

Given a nonzero, finite measure $\al$ on $S$, a Dirichlet process on $S$ with
parameter $\al$ is a random probability measure $\la$ on $S$ that satisfies
\begin{equation}\label{DirProcFDD}
  \cL(\la(B_0), \ldots, \la(B_d)) = \Dir(\al(B_0), \ldots, \al(B_d)),
\end{equation}
whenever $\{B_0, \ldots, B_d\} \subseteq \cS$ is a partition of $S$. The
right-hand side of \eqref{DirProcFDD} is the Dirichlet distribution on the
simplex $\De^d$. We write $\cD(\al)$ to denote the law of a Dirichlet process
with parameter $\al$. Since a Dirichlet process is an $M_1$-valued random
variable, it follows that $\cD(\al)$ is a Borel probability measure on $M_1$.
That is, $\cD(\al) \in M_1(M_1)$. Given a Borel set $B \subseteq M_1$, we write
$\cD(\al, B)$ for $(\cD(\al))(B)$.

With $\al$ as above, let $\ka = \al(S) > 0$ and $\rho = \ka^{-1} \al$, so that
$\rho \in M_1$. We typically write $\cD(\al) = \cD(\ka \rho)$, and think of the
law of a Dirichlet process as being determined by two parameters, a positive
number $\ka \in (0, \infty)$ and a probability measure $\rho \in M_1$. We call
the measure $\rho$ the base measure, or base distribution, and the number $\ka$
the concentration parameter. If $\ph \in L^1(\rho)$, then
\begin{equation*}
  E \int \ph \, d\la = \int \ph \, d\rho.
\end{equation*}
Taking $\ph = 1_A$ gives the special case, $E[\la(A)] = \rho(A)$. This and other
basic properties of the Dirichlet process can be found in \cite{Ferguson1973}.

A sequence of samples from a Dirichlet process $\la \sim \cD(\ka \rho)$ is a
sequence $\eta = \{\eta_i\}_{i = 1}^\infty$ that satisfies ${\eta \mid \la} \sim
\la^\infty$. We adopt the notation $\bmeta_n = (\eta_1, \ldots, \eta_n)$ and
$\bmx_n = (x_1, \ldots, x_n) \in S^n$. Note that for fixed $i$, we have
\begin{equation*}
  P(\eta_i \in A) = E[P(\eta_i \in A \mid \la)] = E[\la(A)] = \rho(A).
\end{equation*}
Thus, $\rho$ represents our prior distribution on the individual $\eta_i$, in
the case that we have not observed any of their values. As shown in
\cite[Theorem 3.1]{Ferguson1973}, the posterior distribution is given by
\begin{align}
  \cL(\la \mid \bmeta_n)
    &= \cD\bigg(
      \al + \sum_{i = 1}^n \de_{\eta_i}
    \bigg), \text{ and}\label{Dir-Bayes}\\
  \cL(\eta_{n + 1} \mid \bmeta_n)
    &= \frac \ka {\ka + n} \rho + \frac n {\ka + n} \wh \rho_n,
    \label{Dir-post}
\end{align}
where $\wh \rho_n = n^{-1} \sum_{i = 1}^n \de_{\eta_i}$ is the empirical
distribution of $\bmeta_n$.

The next proposition expresses \eqref{Dir-Bayes} in a purely analytic form.

\begin{prop}
  Let $\al$ be a nonzero, finite measure on $S$. Then
  \begin{equation}\label{DirBayesIn}
    \int_B \nu^n(A) \, \cD(\al, d\nu) = \int_{M_1} \int_A {
      \cD\bigg(\al + \sum_{i = 1}^n \de_{x_i}, B \bigg)
    } \, \nu^n(d\bmx_n) \, \cD(\al, d\nu),
  \end{equation}
  for every $A \in \cS^n$ and every Borel set $B \subseteq M_1$.
\end{prop}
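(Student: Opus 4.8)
The plan is to recognize both sides of \eqref{DirBayesIn} as two computations of the same joint probability, $P(\la \in B,\ \bmeta_n \in A)$, realized on a probability space carrying a Dirichlet process $\la \sim \cD(\al)$ together with a vector of samples $\bmeta_n = (\eta_1, \ldots, \eta_n)$ from it, so that $\bmeta_n \mid \la \sim \la^n$. The left-hand side is the more direct of the two. Since $\la^n(A) = P(\bmeta_n \in A \mid \la)$, the conditioning structure gives
\[
  \int_B \nu^n(A) \, \cD(\al, d\nu)
    = E\big[ 1_B(\la)\, \la^n(A) \big]
    = E\big[ 1_B(\la)\, 1_A(\bmeta_n) \big]
    = P(\la \in B,\ \bmeta_n \in A),
\]
where the middle step is the tower property, using that $1_B(\la)$ is $\si(\la)$-measurable.

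For the right-hand side, I would first observe that the iterated integral against $\nu^n(d\bmx_n)\,\cD(\al, d\nu)$ is integration against the joint law of $(\la, \bmeta_n)$, whose $\bmeta_n$-marginal is precisely $\cL(\bmeta_n)$. The key input is the posterior formula \eqref{Dir-Bayes}, which says that $\bmx_n \mapsto \cD(\al + \sum_{i=1}^n \de_{x_i}, B)$ is a version of the regular conditional probability $P(\la \in B \mid \bmeta_n = \bmx_n)$. Substituting this identification turns the right-hand side into
\[
  E\big[ 1_A(\bmeta_n)\, P(\la \in B \mid \bmeta_n) \big]
    = E\big[ 1_A(\bmeta_n)\, 1_B(\la) \big]
    = P(\la \in B,\ \bmeta_n \in A),
\]
the first equality again being the tower property, since $1_A(\bmeta_n)$ is $\si(\bmeta_n)$-measurable. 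Comparing this with the computation of the left-hand side completes the argument.

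The main obstacle is the bookkeeping around the fact that \eqref{Dir-Bayes} determines the conditional law only $\cL(\bmeta_n)$-almost everywhere. I would need to check that $\bmx_n \mapsto \cD(\al + \sum_{i=1}^n \de_{x_i}, B)$ is genuinely measurable, so that the inner integral is well defined; this follows because it is a probability kernel in its second argument and is measurable in the atoms $x_i$. Then, because the right-hand integrand depends on $(\la, \bmeta_n)$ only through $\bmeta_n$, the outer integration collapses to one against $\cL(\bmeta_n)$, and the almost-everywhere identity from \eqref{Dir-Bayes} is exactly enough; no pointwise version of the posterior is needed. A secondary, routine point is to confirm that the iterated integral may be read as an integral against the joint law, which is just the definition of the kernel composition $\cD(\al)\,\nu^n$ recorded in Section \ref{S:backgrnd}.
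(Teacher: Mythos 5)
Your proposal is correct and is essentially the paper's own proof: both arguments identify each side of \eqref{DirBayesIn} with the joint probability $P(\la \in B,\ \bmeta_n \in A)$, computing the left side by conditioning on $\la$ and the right side by conditioning on $\bmeta_n$ via \eqref{Dir-Bayes}. The only difference is cosmetic---the paper descends from the joint probability through an extra tower step conditioning on $\la$ to reach the iterated integral, while you read the iterated integral directly as integration against the joint law $\nu^n(d\bmx_n)\,\cD(\al, d\nu)$ and collapse to the marginal of $\bmeta_n$, which also lets you dispatch the almost-everywhere caveat a bit more explicitly than the paper does.
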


\begin{proof}
  We first note that
  \[
    P(\la \in B, \bmeta_n \in A) = E[1_B(\la) P(\bmeta_n \in A \mid \la)]
      = E[1_B(\la) \la^n(A)]
      = \int_B \nu^n(A) \, \cD(\al, d\nu).
  \]
  On the other hand, by \eqref{Dir-Bayes}, we have
  \begin{align*}
    P(\la \in B, \bmeta_n \in A)
      &= E[1_A(\bmeta_n) P(\la \in B \mid \bmeta_n)]\\
    &= E\bigg[
      1_A(\bmeta_n) \cD\bigg( \al + \sum_{i = 1}^n \de_{\eta_i}, B \bigg)
    \bigg]\\
    &= E\bigg[ E\bigg[
      1_A(\bmeta_n) \cD\bigg( \al + \sum_{i = 1}^n \de_{\eta_i}, B \bigg)
    \;\bigg|\; \la \bigg] \bigg]\\
    &= E\bigg[ \int_A {
      \cD\bigg( \al + \sum_{i = 1}^n \de_{x_i}, B \bigg)
    } \, \la^n(d\bmx_n) \bigg]\\
    &= \int_{M_1} \int_A {
      \cD\bigg(\al + \sum_{i = 1}^n \de_{x_i}, B \bigg)
    } \, \nu^n(d\bmx_n) \, \cD(\al, d\nu),
  \end{align*}
  which proves \eqref{DirBayesIn}.
\end{proof}

We can also use \eqref{Dir-post} to obtain a recursive formula for the
distribution of $\bmeta_n$. Let $\rho_n = \cL(\bmeta_n)$. Suppose $f: S^{n + 1}
\to \bR$ is bounded and measurable. Then
\[
  \int_{S^{n + 1}} f \, d\rho_{n + 1} = E[f(\bmeta_{n + 1})]
    = E[E[f(\bmeta_{n + 1}) \mid \bmeta_n]]
    = E\bigg[\int_S {
      f(\bmeta_n, x_{n + 1})
    } \, \cL(\eta_{n + 1} \mid \bmeta_n; dx_{n + 1})\bigg].
\]
Using \eqref{Dir-post}, this gives
\[
  \int_{S^{n + 1}} f \, d\rho_{n + 1}
    = \frac \ka {\ka + n} E\bigg[\int_S {
      f(\bmeta_n, x_{n + 1})
    } \, \rho(dx_{n + 1})\bigg]
    + \frac 1 {\ka + n} \sum_{i = 1}^n E[f(\bmeta_n, \eta_i)].
\]
Hence,
\begin{multline}\label{jntLawSamp}
  \int_{S^{n + 1}} f \, d\rho_{n + 1} = \frac \ka {\ka + n} \, {
    \int_{S^n} \int_S f(\bmx_{n + 1}) \, \rho(dx_{n + 1}) \, \rho_n(d\bmx_n)
  }\\
  + \frac 1 {\ka + n} \, {
    \sum_{i = 1}^n \int_{S^n} f(\bmx_n, x_i) \, \rho_n(d\bmx_n)
  },
\end{multline}
for all $n \in \bN$.

\subsection{Mixtures of Dirichlet processes}

Now let $\al$ be a random measure on $S$ such that $\al(S) \in (0, \infty)$ a.s.
Let $\la$ be a random probability measure on $S$ such that $\la \mid \al \sim
\cD(\al)$. In this case, $\la$ is called a mixture of Dirichlet processes on $S$
with mixing distribution $\cL(\al)$. We also let $\ka = \al(S)$ and $\rho =
\al/\al (S)$, so that $\ka$ and $\rho$ are random variables taking values in
$(0, \infty)$ and $M_1$, respectively. Some fundamental properties of these
mixtures are given in \cite{Antoniak1974}.

A sequence of samples from $\la$ is a sequence $\eta = \{\eta_i\}_{i =
1}^\infty$ that satisfies ${\eta \mid \la, \al} \sim \la^\infty$. In this case, \eqref{Dir-Bayes} generalizes to
\begin{equation}\label{DirBayesMx}
  \cL(\la \mid \bmeta_n, \al)
    = \cD\bigg(
      \al + \sum_{i = 1}^n \de_{\eta_i}
    \bigg),
\end{equation}
for any $n \in \bN$.

Now let $(T, \cT)$ be a measurable space. Fix $n \in \bN$ and let $Y$ be a
$T$-valued random variable such that $Y$ and $(\la, \al)$ are conditionally
independent given $\bmeta_n$. This holds, for example, if $Y$ is a function of
$\bmeta_n$ and $W$, where $W$ is some noise that is independent of $(\la, \al)$.
In other words, we can think of $Y$ as a noisy observation of $\bmeta_n$.

The following result extends \eqref{Dir-Bayes} to noisy observations of data
generated by a Dirichlet mixture. A special case of this appears as \cite
[Theorem 3]{Antoniak1974}.

\begin{thm}
  With notation as above, we have
  \begin{equation}\label{DirNoisy1}
    \cL(\la \mid Y) = \int_{(0, \infty) \times M_1 \times S^n} {
      \cD\bigg(t \nu + \sum_{i = 1}^n \de_{x_i}\bigg)
    } \, \cL(\ka, \rho, \bmeta_n \mid Y; dt \, d\nu \, dx),
  \end{equation}
  In particular, if $\al$ is not random, as in
  \eqref{Dir-Bayes}, so that $Y$ and $\la$ are conditionally independent given
  $\bmeta_n$, then
  \begin{equation}\label{DirNoisy2}
    \cL(\la \mid Y) = \int_{S^n} {
      \cD\bigg(\al + \sum_{i = 1}^n \de_{x_i}\bigg)
    } \, \cL(\bmeta_n \mid Y; d\bmx_n)
  \end{equation}
\end{thm}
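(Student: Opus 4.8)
The plan is to verify \eqref{DirNoisy1} straight from the definition of a regular conditional distribution. Writing $K(Y, \cdot)$ for the right-hand side of \eqref{DirNoisy1}, note that $K$ is manifestly a probability kernel, since it averages the probability measures $\cD(t\nu + \sum_{i=1}^n \de_{x_i})$, which depend measurably on $(t, \nu, \bmx_n)$, against a probability kernel. Hence it suffices to show that $E[g(\la)h(Y)] = E[h(Y) \int_{M_1} g \, dK(Y, \cdot)]$ for all bounded measurable $g \colon M_1 \to \bR$ and $h \colon T \to \bR$. By Fubini, $\int_{M_1} g \, dK(Y, \cdot)$ equals the iterated integral of $\int_{M_1} g \, d\cD(t\nu + \sum_{i=1}^n \de_{x_i})$ against $\cL(\ka, \rho, \bmeta_n \mid Y)$, so the whole argument reduces to manipulating this iterated integral. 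Throughout I will use that $\al = \ka\rho$, so that conditioning on $(\ka, \rho)$ is the same as conditioning on $\al$, and $t\nu$ plays the role of $\al$.

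First I would expand the left-hand side by the tower property, conditioning on $(\al, \bmeta_n, Y)$:
\[
  E[g(\la) h(Y)] = E\big[ h(Y) \, E[g(\la) \mid \al, \bmeta_n, Y] \big].
\]
The crux is the inner conditional expectation, and this is the step I expect to be the main obstacle. The hypothesis only gives $Y \perp (\la, \al) \mid \bmeta_n$, whereas I need to drop $Y$ from a conditioning that already includes $\al$. To bridge this, I would first observe that $P(Y \in \cdot \mid \al, \bmeta_n, \la) = P(Y \in \cdot \mid \bmeta_n)$ is free of $(\la, \al)$; averaging this over $\la$ given $(\al, \bmeta_n)$ then shows $P(Y \in \cdot \mid \al, \bmeta_n) = P(Y \in \cdot \mid \bmeta_n)$ as well. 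The two identities together give $Y \perp \la \mid (\al, \bmeta_n)$, which is exactly what licenses dropping $Y$. Granting this, \eqref{DirBayesMx} evaluates the inner expectation as the measurable function
\[
  E[g(\la) \mid \al, \bmeta_n, Y] = E[g(\la) \mid \al, \bmeta_n]
    = \int_{M_1} g \, d\cD\bigg( \al + \sum_{i=1}^n \de_{\eta_i} \bigg)
    =: \Psi(\ka, \rho, \bmeta_n).
\]

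It then remains to re-express $E[h(Y)\,\Psi(\ka, \rho, \bmeta_n)]$. Conditioning once more, now on $Y$, and invoking the regular conditional distribution $\cL(\ka, \rho, \bmeta_n \mid Y)$ gives $E[g(\la) h(Y)] = E[h(Y) \int \Psi(t, \nu, \bmx_n) \, \cL(\ka, \rho, \bmeta_n \mid Y; dt\,d\nu\,dx)]$. Writing $\Psi$ back out and applying Fubini to pull the $g$-integral outside identifies the inner integral with $\int_{M_1} g \, dK(Y, \cdot)$, which is the desired identity and proves \eqref{DirNoisy1}. Finally, \eqref{DirNoisy2} is the special case in which $\al$—hence $\ka$ and $\rho$—is nonrandom, so that $\cL(\ka, \rho, \bmeta_n \mid Y) = \de_{(\ka, \rho)} \otimes \cL(\bmeta_n \mid Y)$ and the integral in \eqref{DirNoisy1} collapses to the integral over $S^n$ appearing in \eqref{DirNoisy2}.
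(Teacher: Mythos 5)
Your proof is correct, and it arrives at the paper's pivotal identity---that $\cL(\la \mid Y)$ is the conditional expectation given $Y$ of $\cD\big(\al + \sum_{i = 1}^n \de_{\eta_i}\big)$---but by a slightly different routing of the conditional-independence calculus. The paper never conditions on $(\al, \bmeta_n, Y)$ jointly: it performs two successive swaps, first writing $P(\la \in B \mid Y) = E[P(\la \in B \mid \bmeta_n) \mid Y]$ using $Y \perp \la \mid \bmeta_n$, then writing $P(\la \in B \mid \bmeta_n) = E[P(\la \in B \mid \bmeta_n, \al) \mid Y, \bmeta_n]$ using $Y \perp \al \mid \bmeta_n$, and collapsing the towers. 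You instead condition on the triple $(\al, \bmeta_n, Y)$ at once, which forces you to establish the weak-union property $Y \perp \la \mid (\al, \bmeta_n)$ from the hypothesis $Y \perp (\la, \al) \mid \bmeta_n$; your inline derivation (the conditional law of $Y$ given $(\la, \al, \bmeta_n)$ is $\si(\bmeta_n)$-measurable, hence agrees with the law given $(\al, \bmeta_n)$ after averaging out $\la$) is exactly right, and you correctly flagged this as the step needing care. What your version buys: a single conditional-independence statement does all the work, and the test-function formulation makes explicit the measurability of $(t, \nu, \bmx_n) \mapsto \cD\big(t\nu + \sum_{i = 1}^n \de_{x_i}; B\big)$, which the paper uses silently in its final disintegration. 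What the paper's version buys: it needs no weak-union lemma, and in fact consumes only the two marginal independences $Y \perp \la \mid \bmeta_n$ and $Y \perp \al \mid \bmeta_n$, so it remains valid under a formally weaker hypothesis than the joint one. The concluding steps---disintegrating against $\cL(\ka, \rho, \bmeta_n \mid Y)$ via \eqref{DirBayesMx}, and collapsing to \eqref{DirNoisy2} when $\al$ is nonrandom so that $\cL(\ka, \rho, \bmeta_n \mid Y)$ factors as a point mass times $\cL(\bmeta_n \mid Y)$---coincide in the two arguments.
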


\begin{proof}
  Let $B \subseteq M_1$ be Borel measurable. Then
  \begin{equation}\label{DirNoisy3}
    P(\la \in B \mid Y) = E[P(\la \in B \mid Y, \bmeta_n) \mid Y]
      = E[P(\la \in B \mid \bmeta_n) \mid Y],
  \end{equation}
  since $Y$ and $\la$ are independent given $\bmeta_n$. Similarly,
  \begin{equation}\label{DirNoisy4}
    P(\la \in B \mid \bmeta_n)
      = E[P(\la \in B \mid \bmeta_n, \al) \mid \bmeta_n]
      = E[P(\la \in B \mid \bmeta_n, \al) \mid Y, \bmeta_n],
  \end{equation}
  since $Y$ and $\al$ are independent given $\bmeta_n$. Substituting 
  \eqref{DirNoisy4} in \eqref{DirNoisy3}, we have
  \[
    P(\la \in B \mid Y)
      = E[E[P(\la \in B \mid \bmeta_n, \al) \mid Y, \bmeta_n] \mid Y]
      = E[P(\la \in B \mid \bmeta_n, \al) \mid Y].
  \]
  By \eqref{DirBayesMx}, this gives
  \begin{align*}
    P(\la \in B \mid Y) &= E\bigg[
      \cD\bigg(\al + \sum_{i = 1}^n \de_{\eta_i}, B\bigg)
    \;\bigg|\; Y \bigg]\\
    &= \int_{(0, \infty) \times M_1 \times S^n} {
      \cD\bigg(t \nu + \sum_{i = 1}^n \de_{x_i}, B\bigg)
    } \, \cL(\ka, \rho, \bmeta_n \mid Y; dt \, d\nu \, dx),
  \end{align*}
  which is \eqref{DirNoisy1}. In the case that $\al$ is not random, this
  reduces to \eqref{DirNoisy2}.
\end{proof}

\section{Sequential imputation}\label{S:seqImp}

As discussed in Section \ref{S:intro}, we aim to find a way to use
\eqref{1rowConDis} to compute $\cL(\bmmu_M \mid \bmX_{MN})$. If we do this via
simulation, then we must find a way to use \eqref{1rowConDis} to simulate
$\bmmu_M$ according to the conditional distribution $\cL (\bmmu_M \mid
\bmX_{MN})$. One approach would be to simulate $\mu_1$ according to the
distribution $\cL(\mu_1 \mid X_{1N})$, and then use that simulated value to
simulate $\mu_2$ according to $\cL(\mu_2 \mid X_{2N}, \mu_1)$, and so on.
However, if we do that, then we would not be simulating $\bmmu_M$ according to
its correct conditional distribution, since our simulation of $\mu_m$ would not
take into account observations from higher-numbered rows.

One way to fix this is to do many such incorrect simulations of $\bmmu_M$. Let
$K$ be the number of incorrect simulations we generate. Some of these $K$
simulations will be ``more incorrect'' than others. We then assign the $K$
simulated values weights according to their level of correctness, and choose one
of them randomly, with probabilities proportional to those weights. If we assign
the weights appropriately, then the distribution of the chosen value will
converge to $\cL(\bmmu_M \mid \bmX_{MN})$ as $K$ tends to infinity.

This is the method of sequential imputation, first introduced in
\cite{Kong1994}. It is an application of the more general method of importance
sampling that originated in \cite{Kloek1978}. In Sections \ref{S:impSamp} and
\ref{S:ess}, we give a generalized formulation of importance sampling, along
with a discussion of effective sample size. In Section \ref{S:seqImpSim}, we
lay out the definitions and constructions that are needed in sequential
imputation. In Sections \ref{S:sim-dens} and \ref{S:pfDens}, we prove that
sequential imputation leads asymptotically to the desired conditional
expectation. The proof in Section \ref{S:pfDens} is a rigorous presentation of
the proof in \cite{Kong1994}.

Unfortunately, as we will see in Section \ref{S:seqImpNDP}, this version of
sequential imputation does not apply to the NDP. In Section \ref{S:pfNoDens},
therefore, we provide a new proof under more general assumptions. This new
result, given in Theorem \ref{T:seqImpGen}, will allow us in Section 
\ref{S:seqImpNDP} to apply sequential imputation to the NDP.

\subsection{Importance sampling}\label{S:impSamp}

Importance sampling is a method of approximating a particular probability
distribution using samples from a different distribution. The samples themselves
will vary in how ``important'' they are in determining the distribution of
interest. This is modeled by assigning different weights to the samples.

We begin by presenting, without commentary, the formal statement of the method
of importance sampling in Theorem \ref{T:imp-samp} below. We then describe in
Remark \ref{R:imp-samp} the intuitive interpretation of the method.

Let $(T, \cT)$ be a measurable space. Let $Z$ be an $S$-valued random variable
and $Y$ a $T$-valued random variable. Let $\fm^*$ be a probability kernel from
$T$ to $S$ such that $\cL(Z \mid Y) \ll \fm^*(Y)$ a.s. Assume there exist
measurable functions $w: T \times S \to \bR$ and $h: T \to [0, \infty)$ such
that $h(Y) > 0$ a.s., $E h(Y) < \infty$, and
\begin{equation}\label{impSampCon}
  w(Y, \cdot) = h(Y) \, \frac {d\cL(Z \mid Y)} {d\fm^*(Y)} \quad \text{a.s.}
\end{equation}
Define $Z^*$ so that $Z^* \mid Y \sim \fm^*(Y)$ and let $W = w(Y, Z^*)$. Let $
\{(Z^{*, k}, W_k)\}_{k = 1}^\infty$ be copies of $(Z^*, W)$ that are
i.i.d.\@ given $Y$. Define $\wt Z^K$ so that
\begin{equation}\label{impSampDef}
  \wt Z^K \mid Z^{*, 1}, \ldots, Z^{*, K}, Y \propto \sum_{k = 1}^K {
    W_k \de_{Z^{*, k}}
  }
\end{equation}

\begin{thm}\label{T:imp-samp}
  With the notation and assumptions given above, we have
  \begin{equation}\label{imp-samp}
    \cL(\wt Z^K \mid Z^{*, 1}, \ldots, Z^{*, K}, Y) \to \cL(Z \mid Y)
    \quad \text{a.s.}
  \end{equation}
  as $K \to \infty$.
\end{thm}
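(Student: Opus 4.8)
The plan is to recognize that, by the defining relation \eqref{impSampDef}, the left-hand side of \eqref{imp-samp} is the random weighted empirical measure
\[
  \wh\mu_K := \frac{\sum_{k = 1}^K W_k \de_{Z^{*,k}}}{\sum_{k = 1}^K W_k},
\]
and to show that $\wh\mu_K$ converges weakly to $\cL(Z \mid Y)$ almost surely. Since $S$ is separable and metric, there is a countable family $\{f_n\} \subseteq C_b(S)$ that is convergence-determining for weak convergence, so it suffices to prove, for each fixed bounded measurable $f$ (in particular each $f_n$), that $\wh\mu_K(f) \to \int_S f \, d\cL(Z \mid Y)$ almost surely, and then to intersect the countably many resulting almost-sure events.

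The first step is to handle the numerator and denominator of $\wh\mu_K(f)$ separately by a conditional strong law of large numbers. Conditionally on $Y$ the pairs $(Z^{*,k}, W_k)$, $k \ge 1$, are i.i.d., so I would pass to a regular conditional distribution: writing $Q_y$ for the conditional law of $(Z^*, W)$ given $Y = y$, the sequence $\{(Z^{*,k}, W_k)\}$ has conditional law $Q_y^\infty$ given $Y = y$. For fixed bounded measurable $f$ the map $(z, \omega) \mapsto \omega f(z)$ is $Q_y$-integrable for $\cL(Y)$-a.e.\ $y$, since $W \ge 0$ and $\int \omega \, Q_y(dz \, d\omega) = E[W \mid Y = y] = h(y) < \infty$ a.e.\ (using $E h(Y) < \infty$). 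Applying the ordinary strong law of large numbers under $Q_y^\infty$ and then transferring the conclusion to the original space by a Fubini argument over the joint law of $(Y, \{(Z^{*,k}, W_k)\})$, I obtain
\[
  \frac 1 K \sum_{k = 1}^K W_k f(Z^{*,k}) \to E[W f(Z^*) \mid Y],
    \qquad
  \frac 1 K \sum_{k = 1}^K W_k \to E[W \mid Y] \quad \text{a.s.}
\]

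The next step is to evaluate these conditional expectations using \eqref{impSampCon}. Since $Z^* \mid Y \sim \fm^*(Y)$ and $W = w(Y, Z^*)$, the identity \eqref{impSampCon} gives $w(Y, z) \, \fm^*(Y, dz) = h(Y) \, \cL(Z \mid Y; dz)$, so that
\[
  E[W f(Z^*) \mid Y] = \int_S f(z) \, w(Y, z) \, \fm^*(Y, dz) = h(Y) \int_S f \, d\cL(Z \mid Y),
\]
and taking $f \equiv 1$ yields $E[W \mid Y] = h(Y)$. Because $h(Y) > 0$ a.s., dividing the two limits gives
\[
  \wh\mu_K(f) = \frac{\sum_{k = 1}^K W_k f(Z^{*,k})}{\sum_{k = 1}^K W_k} \to \int_S f \, d\cL(Z \mid Y) \quad \text{a.s.},
\]
and intersecting over the countable family $\{f_n\}$ produces a single almost-sure event on which $\wh\mu_K \Rightarrow \cL(Z \mid Y)$, which is \eqref{imp-samp}.

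I expect the main obstacle to be the rigorous justification of the conditional strong law of large numbers, in particular transferring the pathwise convergence holding $Q_y^\infty$-a.s.\ for each $y$ back to an almost-sure statement on the original probability space, and then verifying that the exceptional null sets can be combined into a single null event — both over the conditioning variable $y$ and over the countable determining family $\{f_n\}$. A secondary point requiring care is that the denominator $\sum_{k=1}^K W_k$ is strictly positive for all large $K$, so that $\wh\mu_K$ is eventually well defined; this follows once its normalized version is known to converge to $h(Y) > 0$.
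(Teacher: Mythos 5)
Your proposal is correct and follows essentially the same route as the paper's proof: apply the conditional strong law of large numbers separately to $\frac 1K \sum_{k} W_k f(Z^{*,k})$ and $\frac 1K \sum_k W_k$, identify the limits as $h(Y) E[f(Z) \mid Y]$ and $h(Y)$ via the weight identity \eqref{impSampCon}, and divide using $h(Y) > 0$ a.s. The two points you flag as requiring care --- transferring the conditional SLLN through a regular conditional distribution, and upgrading per-$f$ convergence to weak convergence of the conditional laws via a countable convergence-determining family in $C_b(S)$ --- are precisely the steps the paper leaves implicit, so your write-up is if anything slightly more complete than the original.
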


\begin{rmk}\label{R:imp-samp}
  The interpretation of Theorem \ref{T:imp-samp} is the following. We observe
  $Y$ and we wish to determine $\cL(Z \mid Y)$. Unfortunately, for one reason or
  another, this is not directly possible. Instead, we are only able to determine
  a different distribution, $\fm^*(Y)$, which we call the \emph{simulation
  measure}. Using $\fm^*(Y)$, we generate an i.i.d.\@ collection of samples,
  $Z^{*, 1}, \ldots, Z^{*, K}$. The $k$-th sample, $Z^{*, k}$, gets assigned the
  weight $W_k = w(Y, Z^{*, k})$, where $w$ is some function satisfying
  \eqref{impSampCon}. We then use these weights to randomly choose one of the
  $K$ samples. The randomly chosen sample is denoted by $\wt Z^K$. Theorem
  \ref{T:imp-samp} says that if $K$ is large, then the law of $\wt Z^K$ is close
  to $\cL(Z \mid Y)$.
\end{rmk}

\begin{proof}[Proof of Theorem \ref{T:imp-samp}]
  First note that
  \begin{equation}\label{imp-samp-2}
    \begin{split}
      E[w(Y, Z^*) \mid Y] &= \int_S w(Y, z) \, \fm^*(Y, dz)\\
        &= \int_S h(Y) \frac{d\cL(Z \mid Y)}{d\fm^*(Y)} \fm^*(Y, dz)\\
        &= h(Y) \int_S d\cL(Z \mid Y; dz)\\
        &= h(Y).
    \end{split}
  \end{equation}
  Hence, $E w(Y, Z^*) = E h(Y) < \infty$. Now let $f: S \to \bR$ be bounded
  and measurable. By the conditional law of large numbers, we have
  \[
    \frac 1 K \sum_{k = 1}^K {
      w(Y, Z^*) f(Z^{*, k})
    } \to E[w(Y, Z^*) f(Z^*) \mid Y] \quad \text{a.s.}
  \]
  But
  \begin{multline*}
    E[w(Y, Z^*) f(Z^*) \mid Y] = \int_S w(Y, z) f(z) \, \fm^*(Y, dz)\\
    = h(Y) \int_S f(z) \, \cL(Z \mid Y; dz) = h(Y) E[f(Z) \mid Y].
  \end{multline*}
  Therefore, since $h(Y) > 0$ a.s.,
  \begin{align*}
    E[f(\wt Z^K) \mid Z^{*, 1}, \ldots, Z^{*, K}, Y] &= \frac{
      \sum_{k = 1}^K w(Y, Z^{*, k}) f(Z^{*, k})
    }{
      \sum_{k = 1}^K w(Y, Z^{*, k})
    }\\
    &\to \frac{h(Y) E[f(Z) \mid Y]}{h(Y) E[1 \mid Y]}\\
    &= E[f(Z) \mid Y],
  \end{align*}
  and this proves \eqref{imp-samp}.
\end{proof}

\subsection{Effective sample size}\label{S:ess}

Let $f: S \to \bR$ be continuous and bounded. By \eqref{imp-samp}, if $K$ is
large, then
\[
  \frac{\sum_{k = 1}^K W_k f(Z^{*, k})}{\sum_{k = 1}^K W_k}
    \approx E[f(Z) \mid Y].
\]
On the other hand, by the conditional law of large numbers, if $\{Z^k\}_{k =
1}^\infty$ are copies of $Z$ that are i.i.d.\@ given $Y$, then
\[
  \frac 1 K \sum_{k = 1}^K f(Z^k) \approx E[f(Z) \mid Y].
\]
This latter estimate of $E[f(Z) \mid Y]$ is presumably more efficient, in the
sense that smaller $K$ values are needed. This is because in the latter
estimate, we are generating values directly from $\cL(Z \mid Y)$, rather than
from the modified distribution $\fm^*(Y)$.

In an effort to measure this difference in efficiency, let $K$ be a given number
of weighted samples. We wish to find a number $K_e$ such that
\[
  \Var\bigg(
    \frac{\sum_{k = 1}^K W_k f(Z^{*, k})}{\sum_{k = 1}^K W_k}
  \;\bigg|\; Y \bigg) \approx \Var\bigg(
    \frac 1 {K_e} \sum_{k = 1}^{K_e} f(Z^k)
  \;\bigg|\; Y \bigg).
\]
The right-hand side is $K_e^{-1} \Var(f(Z) \mid Y)$. In \cite{Kong1992}, it is
shown that
\[
  \Var\bigg(
    \frac{\sum_{k = 1}^K W_k f(Z^{*, k})}{\sum_{k = 1}^K W_k}
  \;\bigg|\; Y \bigg) \approx \frac 1 K \Var(f(Z) \mid Y)\bigg(
    1 + \Var\bigg(\frac W {h(Y)} \;\bigg|\; Y \bigg)
  \bigg).
\]
We therefore define
\[
  K_e = \frac K {1 + \Var\left(\frac W {h(Y)} \;\middle|\; Y\right)},
\]
which is called the effective sample size.

By \eqref{imp-samp-2}, we have
\[
  \Var\left(\frac W {h(Y)} \;\middle|\; Y\right)
    = \frac{\Var(W \mid Y)}{h(Y)^2}
    = \frac{\Var(W \mid Y)}{E[W \mid Y]^2}.
\]
Therefore,
\[
  K_e = K\left(1 + \frac{\Var(W \mid Y)}{E[W \mid Y]^2}\right)^{-1}.
\]
If we approximate $E[W \mid Y]$ by the sample mean, $\ol W = K^{-1} \sum_{k =
1}^K W_k$, and $\Var(W \mid Y)$ by the population variance, $\wt \si^2 = (K^ 
{-1} \sum_{k = 1}^K W_k^2) - \ol W^2$, then we have $K_e \approx K_e'$, where
\[
  K_e' = \frac K {1 + \wt \si^2/\ol W^2}
    = \frac{\big(\sum_{k = 1}^K W_k\big)^2}{\sum_{k = 1}^K W_k^2}.
\]
On the other hand, if we use the sample variance, $\si^2 = K(K - 1)^{-1}\wt
\si^2$, then we have $K_e \approx K_e''$, where
\[
  K_e'' = \frac K {1 + \si^2/\ol W^2}
    = \frac{K(K - 1)}{K - 1 + K \wt \si^2/\ol W^2}
    = \frac{K(K - 1)}{K - 1 + K(K/K_e' - 1)}
    = \left(\frac{K - 1}{K - K_e'/K}\right) K_e'.
\]

\subsection{Sequential imputation and the simulation measure}\label{S:seqImpSim}

Now fix $M \in \bN$. Let $Z = (Z_1, \ldots, Z_M)$ be an $S^M$-valued random
variable and let $z = (z_1, \ldots, z_M)$ denote an element of $S^M$. We adopt
the notation $\bmZ_m = (Z_1, \ldots, Z_m)$ and we use $\bmz_m = (z_1, \ldots,
z_m)$ for an element of $S^m$. Note that $\bmZ_M = Z$ and $\bmz_M = z$. We also
let $Y = (Y_1, \ldots, Y_M)$ be a $T^M$-valued random variable and adopt similar
notation in that case.

We think of $Y$ as observed values and $Z$ as unobserved. In this sense, $Z$ is
regarded as ``missing data.'' We wish to determine $\cL(Z \mid Y)$. Suppose,
however, that we are only able to determine $\cL(Z_m \mid \bmY_m, \bmZ_{m - 1})$
for $1 \le m \le M$. (By convention, a variable with a $0$ subscript is omitted.
Hence, when $m = 1$, we have $\cL(Z_m \mid \bmY_m, \bmZ_{m - 1}) = \cL (Z_1 \mid
Y_1)$.) We describe here a method of using $\cL(Z_m \mid \bmY_m, \bmZ_{m - 1})$
to approximate $\cL(Z \mid Y)$. This method is called sequential imputation and
first appeared in \cite{Kong1994}.

Consider, for the moment, the case $M = 2$. By conditioning on $Z_1$, we could
determine $\cL(Z \mid Y)$ sequentially, if we could compute
\begin{enumerate}[(i)]
  \item $\cL(Z_1 \mid Y)$ and
  \item $\cL(Z_2 \mid Y, Z_1)$.
\end{enumerate}
The second of these is available to us, but the first is not. Instead of (i), we
can only compute $\cL(Z_1 \mid Y_1)$. The idea in sequential imputation is to
use $\cL(Z_1 \mid Y_1)$ to simulate $Z_1$, then use this simulated value in (ii)
to determine the law of $Z_2$. We are substituting the missing data $Z_1$ with
its (incorrectly) simulated value. This kind of substitution is called
imputation. Since we are using $\cL(Z_1 \mid Y_1)$ instead of the correct
distribution in (i), we must combine this with the method of importance sampling
presented in Theorem \ref{T:imp-samp}.

To apply Theorem \ref{T:imp-samp}, we first construct the simulation measure
$\fm^*$. Let $\ga_m$ be a probability kernel from $T^m \times S^{m - 1}$ to $S$
with $Z_m \mid \bmY_m, \bmZ_{m - 1} \sim \ga_m(\bmY_m, \bmZ_{m - 1})$. Let
$\ga_m^*$ be the probability kernel from $T^M \times S^{m - 1}$ to $S$ given by
$\ga_m^*(y, \bmz_{m - 1}) = \ga_m(\bmy_m, \bmz_{m - 1})$. Note that $\ga_{M -
1}^*$ is a probability kernel from $T^M \times S^{M - 2}$ to $S$ and $\ga_M^*$
is a probability kernel from $T^M \times S^{M - 1}$ to $S$. Hence, $\ga_{M -
1}^* \ga_M^*$ is a probability kernel from $T^M \times S^{M - 2}$ to $S^2$.
Iterating this, if we define $\fm^* = \ga_1^* \cdots \ga_M^*$, then $\fm^*$ is a
probability kernel from $T^M$ to $S^M$.

In Theorem \ref{T:imp-samp}, we have $Z^* \mid Y \sim \fm^*(Y)$. Hence,
\begin{equation}\label{sim-iter}
  Z_m^* \mid Y, Z_1^*, \ldots, Z_{m - 1}^*
    \sim \ga_m^*(Y, Z_1^*, \ldots, Z_{m - 1}^*)
    = \ga_m(\bmY_m, Z_1^*, \ldots, Z_{m - 1}^*).
\end{equation}
In other words, the simulated vector $Z^* = (Z_1^*, \ldots, Z_M^*
)$ can be constructed sequentially using $\cL(Z_m \mid \bmY_m, \bmZ_{m - 1})$,
where in each step the missing data $\bmZ_{m - 1}$ is imputed with the
previously simulated values $Z_1^*, \ldots, Z_{m - 1}^*$. To prove
that Theorem \ref{T:imp-samp} applies in this situation, we must find a weight
function $w$ satisfying \eqref{impSampCon}.

\subsection{A simulation density}\label{S:sim-dens}

As noted earlier, sequential imputation first appeared in \cite{Kong1994}.
There, a weight function was constructed using density functions. The proof and
construction in \cite{Kong1994} did not specify the codomain of the random
variables $Y$ and $Z$, nor did it specify the measures with respect to which
they have joint and conditional densities. In Theorem \ref{T:Kong-Liu} below, we
give a rigorous formulation of the proof in \cite{Kong1994}. First, we clarify
the assumptions about the existence of densities, and then show how this relates
to the simulation measure $\fm^*$.

\begin{assum}\label{A:dens-exist}
  There exist $\si$-finite measures $\fn$ and $\wt \fn$ on $S$ and $T$,
  respectively, such that $\cL(Y, Z) \ll \wt \fn^M \times \fn^M$.
\end{assum}

If Assumption \ref{A:dens-exist} holds, then we may let $f = d\cL(Y, Z)/d(\wt
\fn^M \times \fn^M)$ be a density of $(Y, Z)$ with respect to $\wt \fn^M \times
\fn^M$. If we write $f$ with omitted arguments, it is assumed that they have
been integrated out. For example,
\[
  f(y_1, \bmz_m) = \int_{S^{M - m}} \int_{T^{M - 1}} {
    f(y, z)
  } \, \wt \fn^{M - 1}(dy_2 \cdots dy_M) \, \fn^{M - m}(dz_{m + 1} \cdots dz_m).
\]
In other words, such functions are the marginal densities. By changing $f$ on a
set of measure zero, we may assume $f \in [0, \infty)$ everywhere and if the
value of a marginal density at a point is $0$, then $f$ at that point is $0$ for
all values of the omitted arguments.

We use $|$ to denote conditional densities. For example,
\[
  f(z_{m + 1} \mid \bmy_m, \bmz_m) = \frac{
    f(\bmy_m, \bmz_{m + 1})
  }{
    f(\bmy_m, \bmz_m)
  }.
\]
As usual, we adopt the convention that a variable with a $0$ subscript is
omitted. For instance, if $m = 1$, then $f(\bmy_m, \bmz_{m - 1}) = f(y_1)$ and
$f(z_m \mid \bmy_m, \bmz_{m - 1}) = f(z_1 \mid y_1)$.

With this notation, we may write
\[
  \ga_m^*(y, \bmz_{m - 1}, dz_m) = {
    f(z_m \mid \bmy_m, \bmz_{m - 1}) \, \fn(dz_m)
  }.
\]
We also have
\begin{multline*}
  (\ga_{M - 1}^* \ga_M^*)(y, \bmz_{M - 2}, dz_{M - 1} \, dz_M) = {
    \ga_M^*(y, \bmz_{M - 1}, dz_M) \, \ga_{M - 1}^*(y, \bmz_{M - 2}, dz_{M - 1})
  }\\
  = {
    f(z_M \mid \bmy_M, \bmz_{M - 1})
    f(z_{M - 1} \mid \bmy_{M - 1}, \bmz_{M - 2})
    \, \fn(dz_M) \, \fn(dz_{M - 1})
  }.
\end{multline*}
Iterating this, we obtain $\fm^*(y, dz) = f^*(y, z) \, \fn^M(dz)$, where
\[
  f^*(y, z) = \prod_{m = 1}^M f(z_m \mid \bmy_m, \bmz_{m - 1}).
\]

\subsection{A proof using densities}\label{S:pfDens}

We now define the weight function and show that sequential imputation leads
asymptotically to $\cL(Z \mid Y)$. For $(y, z) \in T \times S$, define
\[
  w(y, z) = \prod_{m = 1}^M f(y_m \mid \bmy_{m - 1}, \bmz_{m - 1}).
\]
Define $Z^{*, k}$ and $\wt Z^K$ as in \eqref{impSampDef}.

\begin{thm}\label{T:Kong-Liu}
  If Assumption \ref{A:dens-exist} holds and $f(y) \in L^2(\wt \fn^M)$, then
  \begin{equation}\label{Kong-Liu}
    \cL(\wt Z^K \mid Z^{*, 1}, \ldots, Z^{*, K}, Y) \to \cL(Z \mid Y)
    \quad \text{a.s.}
  \end{equation}
\end{thm}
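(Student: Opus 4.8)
The plan is to apply Theorem~\ref{T:imp-samp} by verifying its hypotheses with the simulation measure $\fm^*$ constructed in Section~\ref{S:seqImpSim} and the weight function $w$ just defined. The theorem's conclusion \eqref{imp-samp} is literally the conclusion \eqref{Kong-Liu} we want, so the entire task reduces to checking that $\cL(Z \mid Y) \ll \fm^*(Y)$ a.s.\ and that there exist functions $w$ and $h$ with $h(Y) > 0$ a.s., $E h(Y) < \infty$, and satisfying the density identity \eqref{impSampCon}. The natural candidate for $h$ is $h(y) = f(y)$, the marginal density of $Y$; the condition $f(y) \in L^2(\wt\fn^M)$ in the hypothesis should be what guarantees $E h(Y) = E f(Y) < \infty$ (in fact $L^1$ would suffice for finiteness, but $L^2$ is presumably wanted for the effective-sample-size bound in Section~\ref{S:ess}, or to ensure the law of large numbers applies to $wf$).

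The core computation is to identify the Radon--Nikodym derivative $d\cL(Z \mid Y)/d\fm^*(Y)$ explicitly. First I would write down the conditional density of $Z$ given $Y$. Under Assumption~\ref{A:dens-exist}, $\cL(Z \mid Y = y)$ has density $f(z \mid y) = f(y,z)/f(y)$ with respect to $\fn^M$, and this density telescopes as
\[
  f(z \mid y) = \prod_{m=1}^M f(z_m \mid \bmy_M, \bmz_{m-1}).
\]
Meanwhile, Section~\ref{S:sim-dens} already established that $\fm^*(y, dz) = f^*(y,z)\,\fn^M(dz)$ with
\[
  f^*(y,z) = \prod_{m=1}^M f(z_m \mid \bmy_m, \bmz_{m-1}).
\]
The two products differ only in that the true conditional density conditions each factor on the full observation vector $\bmy_M$, whereas the simulation density conditions only on $\bmy_m$, the observations available up through step $m$. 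Dividing, the Radon--Nikodym derivative is the ratio of these two products, and the key algebraic step is to telescope this ratio. Writing each factor as $f(\bmy_M,\bmz_m)/f(\bmy_M,\bmz_{m-1})$ versus $f(\bmy_m,\bmz_m)/f(\bmy_m,\bmz_{m-1})$ and multiplying over $m$, both the $\bmz$-indexed numerators and denominators cancel in a chain, leaving $f(y)\prod_{m=1}^M f(y_m \mid \bmy_{m-1}, \bmz_{m-1})^{-1}$ in one grouping. Rearranging, one obtains
\[
  f(y)\,\frac{d\cL(Z \mid Y)}{d\fm^*(Y)} = \prod_{m=1}^M f(y_m \mid \bmy_{m-1}, \bmz_{m-1}) = w(y,z),
\]
which is exactly \eqref{impSampCon} with $h(y)=f(y)$.

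The main obstacle is the careful bookkeeping of the telescoping cancellation: one must verify that the chain of marginal densities collapses cleanly across all $M$ factors, and simultaneously track the measure-zero conventions adopted after Assumption~\ref{A:dens-exist} so that every ratio is well-defined. In particular, one must confirm that $\cL(Z \mid Y) \ll \fm^*(Y)$ holds a.s.---equivalently, that $f^*(y,z) = 0$ forces $f(z \mid y) = 0$ on the relevant set---which follows from the convention that a vanishing marginal density forces the joint density to vanish for all omitted arguments. Once the identity \eqref{impSampCon} is established and $E f(Y) < \infty$ is confirmed from the integrability hypothesis, Theorem~\ref{T:imp-samp} applies directly and yields \eqref{Kong-Liu}. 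No genuinely new probabilistic argument is needed beyond the reduction; the substance is entirely in the density algebra and the verification of absolute continuity.
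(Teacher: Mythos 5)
Your proposal is correct and follows essentially the same route as the paper's proof: reduce to Theorem \ref{T:imp-samp} with $h(y) = f(y)$, and establish \eqref{impSampCon} via the telescoping identity $w(y,z)\, f^*(y,z) = f(y,z)$ --- your ratio of the two chain-rule factorizations is the same algebra, just grouped differently, and your handling of absolute continuity via the vanishing-marginal convention matches the paper. One parenthetical remark is wrong, though: since $Y$ has density $f$ with respect to $\wt\fn^M$, we have $E f(Y) = \int_{T^M} f(y)^2 \, \wt\fn^M(dy)$, so the $L^2$ hypothesis is \emph{exactly} what makes $E h(Y) < \infty$; $L^1$ does not suffice ($f$ is automatically in $L^1$, being a probability density, yet $E f(Y)$ can still be infinite), and the hypothesis is not there merely for effective-sample-size or law-of-large-numbers reasons.
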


\begin{proof}
  By Theorem \ref{T:imp-samp}, it suffices to show that $\cL(Z \mid Y) \ll
  \fm^*(Y)$ a.s., $f(Y) > 0$ a.s., $E f(Y) < \infty$, and
  \[
    w(Y, \cdot) = f(Y) \, \frac {d\cL(Z \mid Y)} {d\fm^*(Y)} \quad \text{a.s.}
  \]
  Since $f(y)$ is the density of $Y$ with respect to $\wt \fn^M$, we have
  \[
    P(f(Y) = 0) = \int_{f^{-1}(\{0\})} f(y) \, \wt \fn^M(dy) = 0,
  \]
  so that $f(Y) > 0$ a.s. Since $f(y) \in L^2(\wt \fn^M)$, we also have
  \[
    E f(Y) = \int_{T^M} f(y)^2 \, \wt \fn^M(dy) < \infty.
  \]
  Finally,
  \[
    w(y, z) f^*(y, z) = \prod_{m = 1}^M {
      \frac{f(\bmy_m, \bmz_{m - 1})}{f(\bmy_{m - 1}, \bmz_{m - 1})} \,
      \frac{f(\bmy_m, \bmz_m)}{f(\bmy_m, \bmz_{m - 1})}
    } = f(y, z).
  \]
  Hence,
  \begin{multline*}
    \cL(Z \mid Y = y; dz) = f(z \mid y) \, \fn^M(dz)
    = \frac{f(y, z)}{f(y)} \, \fn^M(dz)\\
    = \frac{w(y, z)}{f(y)} \, f^*(y, z) \, \fn^M(dz)
    = \frac{w(y, z)}{f(y)} \, \fm^*(y, dz).
  \end{multline*}
  Therefore, $d\cL(Z \mid Y)/d\fm^*(Y) = w(Y, \cdot)/f(Y)$ a.s.
\end{proof}

\subsection{A proof without a simulation density}\label{S:pfNoDens}

We wish to apply sequential imputation to determine $\cL(\bmmu_M \mid \bmX_
{MN})$, using the computable distributions \eqref{1rowConDis}. In this case, we
would naturally take $Z_m = \mu_m$ and $Y_m = X_{mN} = (\xi_{m1}, \xi_{m2},
\ldots, \xi_{mN})$. In \cite{Liu1996}, Liu alleged to do exactly this in
the special case $S = \{0, 1\}$, using the results in \cite{Kong1994} as his
justification.

Unfortunately, sequential imputation---as it is presented in Theorem
\ref{T:Kong-Liu}---does not apply in this case. Namely, Assumption
\ref{A:dens-exist} is not satisfied. In fact, it is straightforward to verify
that, as long as $\vrho$ is not a point mass, the vector $Z = \bmmu_m$ has no
joint density with respect to any product measure.

Hence, the proof of Theorem \ref{T:Kong-Liu}, which is a rigorous presentation
of the proof in \cite{Kong1994}, does not justify the use of sequential
imputation in this setting. This includes not only the general setting that we
are working with, but also the special case $S = \{0, 1\}$ that was treated in
\cite{Liu1996}.

In this section, we give a new proof of \eqref{Kong-Liu}, in which we do not
require the joint densities of Assumption \ref{A:dens-exist}. In doing so, we
retroactively justify the results in \cite{Liu1996}, and also lay the
foundations for applying sequential imputation to the NDP on an arbitrary state
space $S$.

We continue to let the simulation measure $\fm^*$ be defined as in Section
\ref{S:seqImpSim}, but we must drop the assumption that $\fm^*$ has a density
with respect to a product measure. We cannot drop densities altogether, though,
since they are essential to defining the weight function.

\begin{assum}\label{A:densYexist}
  There exist $\si$-finite measures $\fn_1, \fn_2, \ldots, \fn_M$ and $\wt \fn$
  on $S, S^2, \ldots, S^M$ and $T$, respectively, such that $\cL(Y, \bmZ_m) \ll
  \wt \fn^M \times \fn_m$ for every $m$.
\end{assum}

Under Assumption \ref{A:densYexist}, we may let $f_m$ be a density of $(Y,
\bmZ_m)$ with respect to $\wt \fn^M \times \fn_m$. We adopt the same assumptions
and notational conventions for $f_m$ as we did for $f$ in Section
\ref{S:sim-dens}. For $(y, z) \in T^M \times S^M$, define
\begin{equation}\label{weightFunc}
  w(y, z) = f_1(y_1) \prod_{m = 1}^{M - 1} {
    f_m(y_{m + 1} \mid \bmy_m, \bmz_m)
  }.
\end{equation}
Define $Z^{*, k}$ and $\wt Z^K$ as in \eqref{impSampDef}.

\begin{thm}\label{T:seqImpGen}
  If Assumption \ref{A:densYexist} holds and $f_M(y) \in L^2(\wt \fn^M)$, then
  \[
    \cL(\wt Z^K \mid Z^{*, 1}, \ldots, Z^{*, K}, Y) \to \cL(Z \mid Y)
    \quad \text{a.s.}
  \]
\end{thm}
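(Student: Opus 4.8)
The plan is to reduce Theorem \ref{T:seqImpGen} to Theorem \ref{T:imp-samp}, exactly as the proof of Theorem \ref{T:Kong-Liu} does, but working around the fact that the full joint density $f = d\cL(Y,Z)/d(\wt\fn^M \times \fn^M)$ no longer exists. The key observation is that Theorem \ref{T:imp-samp} only requires a weight function $w$ satisfying \eqref{impSampCon}, the integrability $Ef_M(Y) < \infty$, the positivity $f_M(Y) > 0$ a.s., and the absolute continuity $\cL(Z \mid Y) \ll \fm^*(Y)$. None of these four facts genuinely needs a joint density of $(Y,Z)$; what the old proof used the joint density \emph{for} was (a) writing $\fm^*(y,dz) = f^*(y,z)\,\fn^M(dz)$ and (b) the telescoping identity $w(y,z)f^*(y,z) = f(y,z)$. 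So first I would verify that each of the four hypotheses of Theorem \ref{T:imp-samp} holds using only the marginal densities $f_m$ of $(Y,\bmZ_m)$ guaranteed by Assumption \ref{A:densYexist}.

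The positivity and integrability steps go through verbatim from the proof of Theorem \ref{T:Kong-Liu}, reading $f_M$ for $f$: since $f_M(y)$ is the density of $Y$ with respect to $\wt\fn^M$, we get $P(f_M(Y) = 0) = 0$, and since $f_M(y) \in L^2(\wt\fn^M)$, the proof of \eqref{imp-samp-2}-style computation gives $Ef_M(Y) = \int f_M(y)^2\,\wt\fn^M(dy) < \infty$. The substantive work is establishing the Radon–Nikodym identity
\[
  w(Y, \cdot) = f_M(Y)\,\frac{d\cL(Z \mid Y)}{d\fm^*(Y)} \quad\text{a.s.}
\]
To do this I would multiply $w(y,z)$ from \eqref{weightFunc} by the simulation density $f^*(y,z) = \prod_{m=1}^M f(z_m \mid \bmy_m, \bmz_{m-1})$ from Section \ref{S:sim-dens}. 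Here I must be careful: the simulation kernel $\fm^*$ is built from the \emph{conditional} laws $\cL(Z_m \mid \bmY_m, \bmZ_{m-1})$, and under Assumption \ref{A:densYexist} these conditionals still have densities against $\fn$ because they can be extracted from the $f_m$'s. The cleanest way to see this is to note that the conditional density $f(z_m \mid \bmy_m, \bmz_{m-1})$ appearing in $f^*$ can be written as a ratio of marginals of $f_m$ and $f_{m-1}$, so I would carefully track which $f_m$ each factor comes from and show that the product $w(y,z)\,f^*(y,z)$ telescopes to $f_M(y,z)$, a density of $(Y,Z) = (Y,\bmZ_M)$ against $\wt\fn^M \times \fn_M$.

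The main obstacle is precisely this telescoping bookkeeping, because the factors in $w$ and in $f^*$ are now densities against \emph{different} base measures $\fn_m$ (the density of $(Y,\bmZ_m)$ is taken against $\wt\fn^M \times \fn_m$, and the $\fn_m$ are unrelated across $m$), whereas in Theorem \ref{T:Kong-Liu} every factor lived over the single product measure $\fn^M$. I expect the resolution is that the cross-term base measures must cancel in the ratios, so that the final identity reads $\cL(Z \mid Y = y; dz) = (w(y,z)/f_M(y))\,\fm^*(y,dz)$, with $\fm^*(y,dz)$ carrying the $\fn$-factors and $w/f_M$ being a genuine Radon–Nikodym derivative. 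Verifying that the mixed base measures are consistent—i.e.\ that the conditional densities implicit in $\fm^*$ agree $\fn^M$-a.e.\ with the ratios of the $f_m$'s, and that these ratios really do multiply out to $f_M$—is where the proof must do work that the density-based proof got for free. Once the identity is in hand, absolute continuity $\cL(Z \mid Y) \ll \fm^*(Y)$ follows immediately, all four hypotheses of Theorem \ref{T:imp-samp} are met, and the conclusion is exactly its conclusion \eqref{imp-samp}.
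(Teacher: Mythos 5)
Your reduction to Theorem \ref{T:imp-samp} and the positivity/integrability steps are correct and match the paper. But your central device fails: the simulation density $f^*(y, z) = \prod_{m = 1}^M f(z_m \mid \bmy_m, \bmz_{m - 1})$ from Section \ref{S:sim-dens} does not exist in this setting, and your claim that the conditional laws $\cL(Z_m \mid \bmY_m, \bmZ_{m - 1})$ ``still have densities against $\fn$ because they can be extracted from the $f_m$'s'' is false. Assumption \ref{A:densYexist} gives a density of $(Y, \bmZ_m)$ against $\wt \fn^M \times \fn_m$, where $\fn_m$ is an arbitrary $\si$-finite measure on $S^m$ --- \emph{not} a product, and with no compatibility required between $\fn_m$ and $\fn_{m - 1}$. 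So the ratio of marginals of $f_m$ and $f_{m - 1}$ is not a conditional density of the single coordinate $Z_m$ against any fixed measure on $S$. This is not removable bookkeeping: in the intended NDP application (the entire reason this theorem replaces Theorem \ref{T:Kong-Liu}), the kernel $\ga_m(\bmy_m, \bmnu_{m - 1})$ of \eqref{compGamma} carries point masses $\de_{\nu_i}$ at locations that move with the conditioning variables, so these kernels admit no common dominating measure and $\fm^*$ has no density with respect to any product measure. The ``cross-term cancellation'' you hope for cannot even be set up, because the factors of $f^*$ you want to cancel are undefined; and a repair via disintegrating $\fn_m$ over $S^{m - 1}$ would need compatibility hypotheses that Assumption \ref{A:densYexist} does not grant.

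The paper's proof avoids this by running the telescoping at the level of measures, taking densities only in the $Y$-coordinates, where they genuinely exist. For each $k$ it computes $P(Y_{k + 1} \in A, \bmZ_k \in B, Z_{k + 1} \in C \mid \bmY_k)$ two ways: once by conditioning through $(\bmY_{k + 1}, \bmZ_k)$, which brings in the kernel $\ga_{k + 1}$ integrated against $f_k(y_{k + 1} \mid \bmY_k, \bmz_k) \, \wt \fn(dy_{k + 1})$ --- a conditional density of the \emph{observation} coordinate, available from $f_k$ --- and once via $f_{k + 1}$. Equating the two and using $f_{k + 1}(\bmY_k) = f_k(\bmY_k)$ a.s.\@ (both are densities of $\bmY_k$ against $\wt \fn^k$) yields, almost surely,
\[
  \cL(\bmZ_{k + 1} \mid \bmY_{k + 1}; d\bmz_{k + 1})
    = \frac{f_k(\bmY_k)}{f_{k + 1}(\bmY_{k + 1})} \,
      f_k(Y_{k + 1} \mid \bmY_k, \bmz_k) \,
      \ga_{k + 1}^*(Y, \bmz_k, dz_{k + 1}) \,
      f_k(\bmz_k \mid \bmY_k) \, \fn_k(d\bmz_k),
\]
in which the conditional law of $Z_{k + 1}$ enters only as the kernel $\ga_{k + 1}^*$, never through a density. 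Iterating from $k = M - 1$ down to $k = 1$, and using $\ga_1^*(Y, dz_1) = f_1(z_1 \mid Y_1) \, \fn_1(dz_1)$ (which \emph{does} exist, since $(Y, Z_1) \ll \wt \fn^M \times \fn_1$), produces $\cL(Z \mid Y; dz) = f_M(Y)^{-1} w(Y, z) \, \fm^*(Y, dz)$ with $w$ as in \eqref{weightFunc}, which is exactly \eqref{KongLiu+}. Your proposal correctly identifies the target identity and the right factors $f_m(y_{m + 1} \mid \bmy_m, \bmz_m)$ in the weight, but the route through $f^*$ would have to be abandoned, not patched, to complete the argument.
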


\begin{proof}
  The first part of the proof of Theorem \ref{T:Kong-Liu} carries over, so we
  need only show that
  \begin{equation}\label{KongLiu+}
    w(Y, \cdot) = f_M(Y) \, \frac {d\cL(Z \mid Y)} {d\fm^*(Y)} \quad \text{a.s.}
  \end{equation}
  Let $k \in \{1, \ldots, M - 1\}$ and let $A \in \cT$, $B \in \cS^k$, and $C
  \in \cS$.
  Then
  \begin{align*}
    P(Y_{k + 1} &\in A, \bmZ_k \in B, Z_{k + 1} \in C \mid \bmY_k, \bmZ_k)\\
    &= 1_B(\bmZ_k) P(Y_{k + 1} \in A, Z_{k + 1} \in C \mid \bmY_k, \bmZ_k)\\
    &= 1_B(\bmZ_k) E[{
      P(Y_{k + 1} \in A, Z_{k + 1} \in C \mid \bmY_{k + 1}, \bmZ_k)
    } \mid \bmY_k, \bmZ_k]\\
    &= 1_B(\bmZ_k) E[{
      1_A(Y_{k + 1}) \ga_{k + 1}(\bmY_{k + 1}, \bmZ_k, C)
    } \mid \bmY_k, \bmZ_k]\\
    &= 1_B(\bmZ_k) \int_A {
      \ga_{k + 1}(\bmY_k, y_{k + 1}, \bmZ_k, C)
      f_k(y_{k + 1} \mid \bmY_k, \bmZ_k)
    } \, \wt \fn(dy_{k + 1}).
  \end{align*}
  Hence,
  \begin{align*}
    P(Y_{k + 1} &\in A, \bmZ_k \in B, Z_{k + 1} \in C \mid \bmY_k)\\
    &= E\bigg[{
      1_B(\bmZ_k) \int_A {
        \ga_{k + 1}(\bmY_k, y_{k + 1}, \bmZ_k, C)
        f_k(y_{k + 1} \mid \bmY_k, \bmZ_k)
      } \, \wt \fn(dy_{k + 1})
    } \bigg| \bmY_k \bigg]\\
    &= \int_B \int_A {
      \ga_{k + 1}(\bmY_k, y_{k + 1}, \bmz_k, C)
      f_k(y_{k + 1} \mid \bmY_k, \bmz_k)
    } \, \wt \fn(dy_{k + 1}) f_k(\bmz_k \mid \bmY_k) \, \fn_k(d\bmz_k)\\
    &= \int_A \int_B \int_C {
      f_k(y_{k + 1} \mid \bmY_k, \bmz_k)
      \, \ga_{k + 1}(\bmY_k, y_{k + 1}, \bmz_k, dz_{k + 1})
      f_k(\bmz_k \mid \bmY_k)
      \, \fn_k (d\bmz_k)
      \, \wt \fn(dy_{k + 1})
    }.
  \end{align*}
  On the other hand,
  \[
    P(Y_{k + 1} \in A, \bmZ_k \in B, Z_{k + 1} \in C \mid \bmY_k)
      = \int_A \int_{B \times C} {
      f_{k + 1}(y_{k + 1}, \bmz_{k + 1} \mid \bmY_k)
    } \, \fn_{k + 1}(d\bmz_{k + 1}) \, \wt \fn(dy_{k + 1}).
  \]
  Hence,
  \begin{multline*}
    \int_{B \times C} {
      f_{k + 1}(y_{k + 1}, \bmz_{k + 1} \mid \bmY_k)
    } \, \fn_{k + 1}(d\bmz_{k + 1})\\
    = \int_B \int_C {
      f_k(y_{k + 1} \mid \bmY_k, \bmz_k)
      \, \ga_{k + 1}(\bmY_k, y_{k + 1}, \bmz_k, dz_{k + 1})
      f_k(\bmz_k \mid \bmY_k)
      \, \fn_k (d\bmz_k)
    }, 
  \end{multline*}
  for $\wt \fn$-a.e.\@ $y_{k + 1} \in T$. In particular, with probability one,
  we have
  \begin{align*}
    \cL(\bmZ_{k + 1} \mid \bmY_{k + 1}; d&\bmz_{k + 1})\\
    &= {
      f_{k + 1}(\bmz_{k + 1} \mid \bmY_{k + 1})
    } \, \fn_{k + 1}(d\bmz_{k + 1})\\
    &= \frac{
      f_{k + 1}(Y_{k + 1}, \bmz_{k + 1} \mid \bmY_k)
    }{
      f_{k + 1}(Y_{k + 1} \mid \bmY_k)
    } \, \fn_{k + 1}(d\bmz_{k + 1})\\
    &= \frac{f_{k + 1}(\bmY_k)}{f_{k + 1}(\bmY_{k + 1})} {
      f_k(Y_{k + 1} \mid \bmY_k, \bmz_k)
      \, \ga_{k + 1}(\bmY_{k + 1}, \bmz_k, dz_{k + 1})
      f_k(\bmz_k \mid \bmY_k)
      \, \fn_k (dz_k)
    }.
  \end{align*}
  Since $f_{k + 1}(\bmy_k)$ and $f_k(\bmy_k)$ are both densities of $\bmY_k$
  with respect to $\wt \fn^k$, we have $f_{k + 1}(\bmy_k) = f_k(\bmy_k)$, $\wt
  \fn^k$-a.e. In particular, $f_{k + 1}(\bmY_k) = f_k(\bmY_k)$ a.s. Thus,
  \begin{multline*}
    f_{k + 1}(\bmz_{k + 1} \mid \bmY_{k + 1}) \, \fn_{k + 1}(d\bmz_{k + 1})\\
    = \frac{f_k(\bmY_k)}{f_{k + 1}(\bmY_{k + 1})} {
      f_k(Y_{k + 1} \mid \bmY_k, \bmz_k)
      \, \ga_{k + 1}^*(Y, \bmz_k, dz_{k + 1})
      f_k(\bmz_k \mid \bmY_k)
      \, \fn_k (dz_k)
    },
  \end{multline*}
  almost surely. Note that $\ga_1^*(Y, dz_1) = \ga_1(Y_1, dz_1) = f_1(z_1 \mid
  Y_1)
  \, \fn_1(dz_1)$. Hence, starting with $k = M - 1$ and iterating backwards to
  $k = 1$,
  we obtain
  \[
    \cL(Z \mid Y; dz) = {
      \frac 1 {f_M(Z)} \, w(Y, Z) (\ga_1^* \cdots \ga_M^*)(Y, dz)
    }.
  \]
  Since $\fm^* = \ga_1^* \cdots \ga_M^*$, this proves \eqref{KongLiu+}.
\end{proof}

\section{Sequential imputation for the NDP}\label{S:seqImpNDP}

In this section, we apply sequential imputation, in the form of Theorem
\ref{T:seqImpGen}, to an array of samples from an NDP. In Theorem
\ref{T:seqImpGen}, we take $Z_m = \mu_m$ and we let $Y_m$ represent some
observations we have made about the samples $X_{mN}$.

A direct observation of the samples would be represented by taking $Y_m =
X_{mN}$. In general, though, we cannot treat the case $Y_m = X_{mN}$ with
sequential imputation. This is because Assumption \ref{A:densYexist} may fail.
Part of Assumption \ref{A:densYexist} is that $Y = (Y_1, \ldots, Y_M)$ has a
joint density with respect to some product measure. But when $Y_m = X_{mN}$,
this fails even in the case $M = 2$ and $N = 1$. More specifically, it is
straightforward to verify that if $\vrho$ is not discrete, then $(\xi_{11},
\xi_{21})$ has no joint density with respect to any product measure.

On the other hand, we can observe discrete functions of $X_{mN}$. This is
because Assumption \ref{A:densYexist} is trivially satisfied whenever $Y$ is
discrete. From an applied perspective, this is no restriction at all. Any
real-world measurement will have limits to its precision, meaning that only a
finite number of measurement outcomes are possible. We therefore assume, from
this point forward, that $Y_m$ is a discrete function of $X_{mN}$.

Section \ref{S:discrObs} contains our main result, Theorem \ref{T:seqImpNDP}.
This theorem shows how to use sequential imputation to compute $\cL(\bmmu_M \mid
Y)$. The chief challenge is to construct the simulated row distributions,
$\bmu_M^{*, k}$. According to \eqref{sim-iter}, these should be constructed
using the single-row conditional distributions, $\cL({\mu_m \mid \bmY_m,
\bmmu_{m - 1}})$. In Section \ref{S:cond1row}, we compute $\cL({\mu_m \mid
\bmY_m, \bmmu_{m - 1}})$. In Section \ref{S:genSims}, we use these to generate
$\bmu_M^{*, k}$. Finally, in Section \ref{S:pfSI-NDP}, we give the proof of
Theorem \ref{T:seqImpNDP}.

\subsection{Sequential imputation with discrete observations}\label{S:discrObs}

Let $T$ be a countable set, fix $N \in \bN$, and let $\ph_m: S^N \to T$. Define
$Y_m = \ph_m(X_{mN})$. We adopt the notation of Section \ref{S:seqImpSim}, so
that $Y = (Y_1, \ldots, Y_M)$, $\bmY_m = (Y_1, \ldots, Y_m)$, $y = (y_1, \ldots,
y_M) \in T^M$, and $\bmy_m = (y_1, \ldots, y_m) \in T^m$. We will apply Theorem
\ref{T:seqImpGen} with $M_1$ in place of $S$ and $\bmmu_M$ in place of $Z$. We
therefore change notation from $z$ to $\nu$. That is, $\nu = (\nu_1, \ldots,
\nu_M) \in M_1^M$ and $\bmnu_m = (\nu_1, \ldots, \nu_m) \in M_1^m$.

Let $\vrho_n = \cL(X_{mn})$, so that $\vrho_1 = \vrho$. Using \eqref{jntLawSamp}
with $\ep \vrho$ instead of $\ka \rho$ gives us a recursive way to compute
$\vrho_n$. In particular, for $B_n \in \cS^n$ and $B \in \cS$, we have
\begin{equation*}
  \vrho_{n + 1}(B_n \times B) = \frac \ep{\ep + n} \vrho_n(B_n) \vrho(B)
    + \frac 1{\ep + n} \sum_{i = 1}^n \vrho_n(B_n \cap \pi_i^{-1} B),
\end{equation*}
where $\pi_i: S^n \to S$ is the projection onto the $i$th co-ordinate.

For $m \in \{1, \ldots, M\}$ and $y_m \in T$, let $A_m = \ph_m^{-1}(\{y_m\}) \in
\cS^N$. Then $Y_m = y_m$ if and only if $X_{mN} \in A_m$. Therefore, the 
\emph{prior likelihoods}, $P(Y_m = y_m)$, satisfy
\begin{equation}\label{Y_m-dist}
  P(Y_m = y_m) = \vrho_N(A_m).
\end{equation}
Although the notation does not explicitly indicate it, we must remember that the
set $A_m$ depends on the vector $y_m$.

Now fix $y = (y_1, \ldots, y_M) \in T^M$. Using $y$, we will construct a
\emph{weighted simulation} of $\bmmu_M$, which is a pair $(t, \bmu_M^*)$, where
$t = \{t_{mi}: 1 \le m \le M, 1 \le i \le m\}$ is a triangular array of $[0,
\infty)$-valued random variables and $\bmu_M^* = (u_1^*, \ldots, u_M^*)$ is a
vector of $M_1$-valued random variables, all of which are independent of $Y$.
The rows of $t$, which we denote by $t_m = (t_{m1}, \ldots, t_{mm})$, are called
the \emph{row weights} of the weighted simulation, and the random measures
$u_m^*$ are called the \emph{simulated row distributions}. We construct $t_m$
and $u_m^*$ by recursion on $m$ as follows. Let
\begin{equation}\label{t-defn}
  t_{mi} =\begin{cases}
    (u_i^*)^N(A_m) &\text{if $1 \le i < m$},\\
    \ka \vrho_N(A_m) &\text{if $i = m$},
  \end{cases}
\end{equation}
and
\begin{equation}\label{u^*-defn}
  \cL(u_m^* \mid \bmu_{m - 1}^*) \propto t_{mm} \int_{S^N} {
    \cD\bigg(\ep \vrho + \sum_{n = 1}^N \de_{x_n}\bigg)
  } \, \vrho_N(dx \mid A_m) + \sum_{i = 1}^{m - 1} t_{mi} \de_{u_i^*},
\end{equation}
where $\bmu_{m - 1}^* = (u_1^*, \ldots, u_{m - 1}^*)$ and $\vrho_N(A \mid A_m) =
P(X_{mN} \in A \mid X_{mN} \in A_m)$. In other words,
\[
  P(u_m^* = u_i^* \mid \bmu_{m - 1}^*) = {
    \frac{t_{mi}}{t_{m1} + \cdots t_{mm}}
  },
\]
for $1 \le i < m$, and, with probability $t_{mm}/(t_{m1} + \cdots + t_{mm})$,
the random measure $u_m^*$ is independent of $\bmu_{m - 1}^*$ and has
distribution
\begin{equation}\label{u^*-def-pt}
  \int_{S^N} {
    \cD\bigg(\ep \vrho + \sum_{n = 1}^N \de_{x_n}\bigg)
  } \, \vrho_N(dx \mid A_m).
\end{equation}
The above is what the distribution of $\mu_m$ would be if we had only observed
the row $y_m$. (This is a consequence of \eqref{DirNoisy2}.)

Finally, having constructed the weighted simulation $(t, \bmu_M^*)$, we define
the \emph{total weight} of the weighted simulation to be
\begin{equation}\label{V-defn}
  V = \prod_{m = 1}^M \frac 1{\ka + m - 1} \sum_{i = 1}^m t_{mi}.
\end{equation}

\begin{thm}\label{T:seqImpNDP}
  Let $\{(t^k, \bmu_M^{*, k})\}_{k = 1}^K$ be $K$ independent weighted
  simulations as above, with corresponding total weights $V_k$. Then
  \begin{equation}\label{seqImpNDP1}
    \cL(\bmmu_M \mid Y = y) = \lim_{K \to \infty} \frac{
      \sum_{k = 1}^K V_k \de(\bmu_M^{*, k})
    }{
      \sum_{k = 1}^K V_k
    },
  \end{equation}
  where $\de(\bmu_M^{*, k})$ is the point mass measure on $M_1^M$ centered at
  $\bmu_M^{*, k}$. Consequently, if $\Phi$ is a measurable function on $M_1^M$
  taking values in a metric space and $P(\bmmu_M \in D^c \mid Y = y) = 1$, where
  $D \subseteq M_1^M$ is the set of discontinuities of $\Phi$, then
  \begin{equation}\label{seqImpNDP2}
    \cL(\Phi(\bmmu_M) \mid Y = y) = \lim_{K \to \infty} \frac{
      \sum_{k = 1}^K V_k \de(\Phi(\bmu_M^{*, k}))
    }{
      \sum_{k = 1}^K V_k
    }.
  \end{equation}
\end{thm}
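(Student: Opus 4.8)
The plan is to apply Theorem \ref{T:seqImpGen} with $M_1$ in place of $S$, with $Z = \bmmu_M$ and $Z_m = \mu_m$, and with the discrete observations $Y_m = \ph_m(X_{mN})$. First I would verify the two hypotheses. For Assumption \ref{A:densYexist} I take $\wt\fn$ to be counting measure on the countable set $T$ and $\fn_m = \cL(\bmmu_m)$, a probability measure on $M_1^m$. Since $T^M$ is countable, $\cL(Y,\bmmu_m) \ll \wt\fn^M\times\fn_m$ holds if and only if each fiber $B\mapsto P(Y=y,\bmmu_m\in B)$ is dominated by $\fn_m$; but $P(Y=y,\bmmu_m\in B)\le P(\bmmu_m\in B) = \fn_m(B)$, so this is immediate. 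For the integrability hypothesis, the marginal $f_M(y)$ is the density of $Y$ with respect to counting measure, namely $P(Y=y)$, and $\sum_{y}P(Y=y)^2\le\sum_y P(Y=y) = 1$, so $f_M(y)\in L^2(\wt\fn^M)$.

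Next I would match the abstract objects of Theorem \ref{T:seqImpGen} to the explicit construction \eqref{t-defn}--\eqref{V-defn}. The simulation kernel is $\ga_m = \cL(\mu_m\mid\bmY_m,\bmmu_{m-1})$. Because the rows are conditionally i.i.d.\ given $\vpi$ and each $Y_j$ depends only on $\mu_j$ and independent sampling noise, the observations $Y_1,\ldots,Y_{m-1}$ are conditionally independent of $\mu_m$ given $\bmmu_{m-1}$; hence $\ga_m = \cL(\mu_m\mid Y_m,\bmmu_{m-1})$. Computing this posterior by Bayes' rule---with the Blackwell--MacQueen predictive \eqref{Dir-post} of $\vpi$ as prior, likelihood $\nu^N(A_m)$, and the continuous part supplied by \eqref{DirNoisy2}---yields exactly the mixture \eqref{u^*-defn} (this is the content of Section \ref{S:cond1row}). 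Consequently the recursion defining $u_m^*$ coincides with the sequential simulation \eqref{sim-iter}, so that, conditionally on $Y=y$, the vector $\bmu_M^*$ has the law of $Z^*$.

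The crux is to show that the total weight \eqref{V-defn} equals the importance weight \eqref{weightFunc}, i.e.\ $V = w(y,\bmu_M^*)$. Since each $f_m$ is a density whose $T$-coordinates carry counting measure, the factor $f_m(y_{m+1}\mid\bmy_m,\bmz_m)$ is literally the conditional probability $P(Y_{m+1}=y_{m+1}\mid\bmY_m=\bmy_m,\bmmu_m=\bmz_m)$. By the same conditional-independence argument this equals $P(Y_{m+1}=y_{m+1}\mid\bmmu_m=\bmz_m) = \int\nu^N(A_{m+1})\,\cL(\mu_{m+1}\mid\bmmu_m=\bmz_m; d\nu)$, and applying \eqref{Dir-post} to $\vpi$ gives
\[
  f_m(y_{m+1}\mid\bmy_m,\bmz_m)
    = \frac1{\ka+m}\bigg(\ka\vrho_N(A_{m+1}) + \sum_{i=1}^m\nu_i^N(A_{m+1})\bigg).
\]
Imputing $\bmz_m = \bmu_m^*$ and reading off \eqref{t-defn}, the right-hand side becomes $(\ka+m)^{-1}\sum_{i=1}^{m+1}t_{(m+1)i}$; together with $f_1(y_1) = \vrho_N(A_1) = t_{11}/\ka$ from \eqref{Y_m-dist}, taking the product in \eqref{weightFunc} reproduces \eqref{V-defn} exactly. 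Hence the weights $W_k$ of Theorem \ref{T:seqImpGen} are the total weights $V_k$.

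With these identifications in place, Theorem \ref{T:seqImpGen} gives $\cL(\wt Z^K\mid Z^{*,1},\ldots,Z^{*,K},Y)\to\cL(\bmmu_M\mid Y)$ almost surely, and by \eqref{impSampDef} the left-hand side is the weighted empirical measure $\big(\sum_k V_k\big)^{-1}\sum_k V_k\,\de(\bmu_M^{*,k})$. Evaluating at $Y=y$, which is legitimate because $P(Y=y)>0$, yields \eqref{seqImpNDP1}. The convergence here is weak convergence of probability measures on the complete separable metric space $M_1^M$, so \eqref{seqImpNDP2} follows from the continuous mapping theorem applied to $\Phi$, whose discontinuity set $D$ is null under the limit $\cL(\bmmu_M\mid Y=y)$ by hypothesis. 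I expect the main obstacle to be the weight bookkeeping of the third paragraph---translating the abstract conditional densities $f_m$ into the explicit row weights $t_{mi}$---and, relatedly, making the two conditional-independence reductions fully rigorous.
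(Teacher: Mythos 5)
Your proposal is correct and follows essentially the same route as the paper's proof: Assumption \ref{A:densYexist} is verified with counting measure on $T$ and $\fn_m = \cL(\bmmu_m)$, the $L^2$ check is identical, the factors $f_1(y_1) = \vrho_N(A_1)$ and $f_m(y_{m+1} \mid \bmy_m, \bmnu_m)$ are computed exactly as in the paper via \eqref{condLawX} and \eqref{Y_m-dist}, and your weight bookkeeping identifying $W_k = V_k(Y)$ reproduces \eqref{NDP-weights}. The two steps you flag at the end as needing full rigor are precisely what the paper isolates into Theorem \ref{T:compGamma} (the conditional-independence reduction and the Bayes computation of $\ga_m$) and Proposition \ref{P:mu_k^*-iid} (that the simulations, constructed independently of $Y$ from the fixed string $y$ and then glued via $\mu_m^{*,k} = u_m^{*,k}(Y)$, satisfy $\{\bmmu_M^{*,k}\}_{k=1}^\infty \mid Y \sim \fm^*(Y)^\infty$), so your outline matches the paper's argument with those two ingredients supplied.
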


The proof of Theorem \ref{T:seqImpNDP} will be given in Section \ref{S:pfSI-NDP}.

\begin{cor}\label{C:seqImpNDP}
  With the assumptions of Theorem \ref{T:seqImpNDP}, we have
  \begin{equation}\label{lawHighRow}
    \cL(\mu_{M + 1} \mid Y = y) = \lim_{K \to \infty} \frac 1{\ka + M} \left({
      \ka \cD(\ep \vrho) + \sum_{m = 1}^M \frac{
        \sum_{k = 1}^K V_k \de(u_m^{*, k})
      }{
        \sum_{k = 1}^K V_k
      }
    }\right).
  \end{equation}
  Consequently, if $\Phi$ is a measurable function on $M_1$ taking values in a
  metric space and $P(\mu_{M + 1} \in D^c \mid Y = y) = 1$, where $D \subseteq
  M_1$ is the set of discontinuities of $\Phi$, then
  \begin{equation}\label{lawHighRow2}
    \cL(\Phi(\mu_{M + 1}) \mid Y = y)
      = \lim_{K \to \infty} \frac 1{\ka + M} \left({
        \ka \cD(\ep \vrho) \circ \Phi^{-1} + \sum_{m = 1}^M \frac{
          \sum_{k = 1}^K V_k \de(\Phi(u_m^{*, k}))
        }{
          \sum_{k = 1}^K V_k
        }
      }\right).
  \end{equation}
\end{cor}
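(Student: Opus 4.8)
The plan is to derive \eqref{lawHighRow} from the Dirichlet prediction rule \eqref{Dir-post} together with Theorem \ref{T:seqImpNDP}, and then to obtain \eqref{lawHighRow2} by pushing \eqref{lawHighRow} forward under $\Phi$. First I would apply \eqref{Dir-post} at the top level of the NDP. Since $\vpi \sim \cD(\ka \rho)$ with $\rho = \cD(\ep \vrho)$, and $\mu_1, \ldots, \mu_{M + 1}$ are samples from $\vpi$, the prediction formula \eqref{Dir-post}, read with $M_1$ in place of $S$, gives
\[
  \cL(\mu_{M + 1} \mid \bmmu_M) = \frac{\ka}{\ka + M} \cD(\ep \vrho)
    + \frac{1}{\ka + M} \sum_{m = 1}^M \de_{\mu_m}.
\]

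Next I would establish the conditional independence $\mu_{M + 1} \perp Y \mid \bmmu_M$. This follows from the conditional i.i.d.\ structure noted in the introduction: since $\cL(\bmX_{MN} \mid \mu) = \prod_{i = 1}^M \mu_i^N$ depends only on $\bmmu_M$, the observation $Y$, being a function of $\bmX_{MN}$, is conditionally independent of $(\vpi, \mu_{M + 1})$ given $\bmmu_M$. Conditioning the displayed identity on $Y$ and applying the tower property then yields
\[
  \cL(\mu_{M + 1} \mid Y = y) = \frac{1}{\ka + M} \bigg(
    \ka \cD(\ep \vrho) + \sum_{m = 1}^M \cL(\mu_m \mid Y = y)
  \bigg),
\]
where I have used that the deterministic first term is unaffected by the conditioning and that $E[\de_{\mu_m} \mid Y] = \cL(\mu_m \mid Y)$ as a measure, which one checks by evaluating both sides on an arbitrary Borel set.

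Then I would invoke Theorem \ref{T:seqImpNDP}. Applying \eqref{seqImpNDP2} with the coordinate projection $\Phi = \pi_m \colon M_1^M \to M_1$, which is continuous so that its discontinuity set is empty and the hypothesis is vacuous, gives
\[
  \cL(\mu_m \mid Y = y) = \lim_{K \to \infty} \frac{
    \sum_{k = 1}^K V_k \de(u_m^{*, k})
  }{
    \sum_{k = 1}^K V_k
  }.
\]
Substituting this into the previous display and interchanging the finite sum over $m$ with the limit, which is justified because weak convergence of probability measures is preserved under finite linear combinations with fixed nonnegative coefficients and under addition of the fixed measure $\ka \cD(\ep \vrho)$, produces \eqref{lawHighRow}.

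Finally, for \eqref{lawHighRow2} I would push \eqref{lawHighRow} forward under $\Phi$. The weighted averages on the right-hand side of \eqref{lawHighRow} converge weakly, almost surely, to the probability measure $\cL(\mu_{M + 1} \mid Y = y)$. The hypothesis $P(\mu_{M + 1} \in D^c \mid Y = y) = 1$ says precisely that this limit assigns zero mass to the discontinuity set $D$ of $\Phi$, so the continuous mapping theorem for weak convergence applies and the $\Phi$-pushforwards converge weakly to $\cL(\mu_{M + 1} \mid Y = y) \circ \Phi^{-1} = \cL(\Phi(\mu_{M + 1}) \mid Y = y)$; since the pushforward sends $\cD(\ep \vrho)$ to $\cD(\ep \vrho) \circ \Phi^{-1}$ and each $\de(u_m^{*, k})$ to $\de(\Phi(u_m^{*, k}))$, this is exactly \eqref{lawHighRow2}. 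I expect the only genuinely delicate point to be this last step, namely verifying that the stated discontinuity hypothesis is precisely the condition required by the continuous mapping theorem; the earlier conditional independence argument and the interchange of limit and finite sum are routine.
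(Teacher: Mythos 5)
Your proposal is correct and follows essentially the same route as the paper's proof: the prediction rule \eqref{Dir-post} applied to $\cL(\mu_{M+1} \mid \bmmu_M)$, the conditional independence of $\mu_{M+1}$ and $Y$ given $\bmmu_M$, the tower property, Theorem \ref{T:seqImpNDP} applied to the coordinate marginals, and then the mapping theorem for \eqref{lawHighRow2}. The only difference is presentational---the paper argues dually via arbitrary continuous bounded test functions $\Psi$ on $M_1$ rather than directly at the level of weakly convergent measures, and you spell out the conditional-independence verification and the continuous-mapping step for \eqref{lawHighRow2} that the paper treats as immediate.
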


\begin{proof}
  Let $\Psi: M_1 \to \bR$ be continuous and bounded. Using \eqref{Dir-post} and
  the fact that $\mu_{M + 1}$ and $Y$ are conditionally independent given
  $\bmmu_M$, we have
  \begin{align*}
    E[\Psi(\mu_{M + 1}) \mid Y]
      &= E[E[\Psi(\mu_{M + 1}) \mid \bmmu_M, Y] \mid Y]\\
    &= E[E[\Psi(\mu_{M + 1}) \mid \bmmu_M] \mid Y]\\
    &= E\bigg[
      \frac{\ka}{\ka + M} \int_{M_1} \Psi(\nu) \, \cD(\ep \vrho, d\nu)
      + \frac 1{\ka + M} \sum_{m = 1}^M \Psi(\mu_m)
    \;\bigg|\;
      Y
    \bigg]\\
    &= \frac{\ka}{\ka + M} \int_{M_1} \Psi(\nu) \, \cD(\ep \vrho, d\nu)
      + \frac 1{\ka + M} \sum_{m = 1}^M E[\Psi(\mu_m) \mid Y]
  \end{align*}
  By \eqref{seqImpNDP2}, this gives
  \[
    E[\Psi(\mu_{M + 1}) \mid Y]
      = \frac{\ka}{\ka + M} \int_{M_1} \Psi(\nu) \, \cD(\ep \vrho, d\nu)
      + \frac 1{\ka + M} \sum_{m = 1}^M \lim_{K \to \infty} {
        \frac{
          \sum_{k = 1}^K V_k \Psi(u_m^{*, k})
        }{
          \sum_{k = 1}^K V_k
        }
      }.
  \]
  Since $\Psi$ was arbitrary, this proves \eqref{lawHighRow}, and
  \eqref{lawHighRow2} follows immediately.
\end{proof}

\subsection{Conditioning on a single row}\label{S:cond1row}

We will prove Theorem \ref{T:seqImpNDP} by applying Theorem \ref{T:seqImpGen}.
To do this, we must, among other things, compute the conditional distribution
$\ga_m$ described in Section \ref{S:seqImpSim}. This is done below, and the
result is presented in \eqref{compGamma}. In order to derive this result, we
begin by establishing some formulas that we will need later.

Note that $\si(\prod_{i = 1}^m \mu_i^n) \subseteq \si(\vpi, \bmmu_m) \subseteq
\si(\mu)$. Also, since $\{\xi_{ij}\}$ is row exchangeable, we have
$\cL(\bmX_{mn} \mid \mu) = \prod_{i = 1}^m \mu_i^n$. Therefore,
\begin{equation}\label{condOnBmu}
  \cL(\bmX_{mn} \mid \vpi, \bmmu_m) = \cL(\bmX_{mn} \mid \bmmu_m)
    = \prod_{i = 1}^m \mu_i^n.
\end{equation}
Now let $A \subseteq S^n$ and $B \subseteq M_1$ be Borel. By \eqref{condOnBmu},
\begin{align*}
  P(\mu_m \in B, X_{mn} \in A \mid \bmmu_{m - 1})
    &= E[1_B(\mu_m) P(X_{mn} \in A \mid \bmmu_m) \mid \bmmu_{m - 1}]\\
  &= E[1_B(\mu_m) \mu_m^n(A) \mid \bmmu_{m - 1}].
\end{align*}
By \eqref{Dir-post}, this gives
\begin{equation}\label{condLawMuX}
  P(\mu_m \in B, X_{mn} \in A \mid \bmmu_{m - 1})
    = \frac 1 {\ka + m - 1} \bigg(
      \ka E[1_B(\mu_m) \mu_m^n(A)] + \sum_{i = 1}^{m - 1} 1_B(\mu_i) \mu_i^n(A)
    \bigg).
\end{equation}
In particular, since $P(X_{mn} \in A) = E[\mu_m^n(A)]$, we have
\begin{equation}\label{condLawX}
  P(X_{mn} \in A \mid \bmmu_{m - 1})
    = \frac 1 {\ka + m - 1} \bigg(
      \ka P(X_{mn} \in A) + \sum_{i = 1}^{m - 1} \mu_i^n(A)
    \bigg).
\end{equation}

\begin{thm}\label{T:compGamma}
  Fix $m \in \{1, \ldots, M\}$. Let $\ga_m$ be the probability kernel from $T^m
  \times M_1^{m - 1}$ to $M_1$ with ${\mu_m \mid \bmY_m, \bmmu_{m - 1}} \sim
  \ga_m(\bmY_m, \bmmu_{m - 1})$. Fix $\bmy_m \in T^m$ and $\bmnu_{m - 1} \in M_1^
  {m - 1}$. For $1 \le i \le m$, let
  \begin{equation}\label{q^m-defn}
    q_i = q_i^m(\bmnu_{m - 1}) = \begin{cases}
      \nu_i^N(A_m) &\text{if $1 \le i < m$},\\
      \ka \vrho_N(A_m) &\text{if $i = m$},
    \end{cases}
  \end{equation}
  and let $p_i = q_i/(q_1 + \cdots + q_m)$. Then
  \begin{equation}\label{compGamma}
    \ga_m(\bmy_m, \bmnu_{m - 1}) = p_m \int_{S^N} {
      \cD\bigg(\ep \vrho + \sum_{j = 1}^N \de_{x_j}\bigg)
    } \, \vrho_N(dx \mid A_m) + \sum_{i = 1}^{m - 1} p_i \de_{\nu_i}.
  \end{equation}
\end{thm}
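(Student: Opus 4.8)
The plan is to first strip the conditioning on $\bmY_m$ down to conditioning on the single observation $Y_m$, and then evaluate the resulting single-row posterior as a ratio of the quantities already computed in \eqref{condLawMuX} and \eqref{condLawX}. The first task is to show that only the last observation enters, i.e.\ that $\cL(\mu_m \mid \bmY_m, \bmmu_{m - 1}) = \cL(\mu_m \mid Y_m, \bmmu_{m - 1})$. For this I would establish the conditional independence
\[
  (\mu_m, X_{mN}) \perp (X_{1N}, \ldots, X_{(m - 1)N}) \mid \bmmu_{m - 1}.
\]
The clean way to see this is to condition first on the full sequence $\mu$: under $\mu$ all rows are independent, and $\cL((X_{1N}, \ldots, X_{(m - 1)N}) \mid \mu)$ depends only on $(\mu_1, \ldots, \mu_{m - 1})$, hence is $\si(\bmmu_{m - 1})$-measurable; the tower property then gives the factorization. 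Since $Y_j = \ph_j(X_{jN})$, this yields $(\mu_m, Y_m) \perp (Y_1, \ldots, Y_{m - 1}) \mid \bmmu_{m - 1}$, and the standard identity ``if $(A, B) \perp C \mid D$ then $\cL(A \mid B, C, D) = \cL(A \mid B, D)$'' gives the reduction. This already explains why the right-hand side of \eqref{compGamma} depends on $\bmy_m$ only through $y_m$.

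Next, because $Y_m$ is discrete and $\{Y_m = y_m\} = \{X_{mN} \in A_m\}$, I would express the single-row posterior as a ratio of regular conditional distributions given $\bmmu_{m - 1}$, namely
\[
  \ga_m(\bmy_m, \bmnu_{m - 1})(B)
    = \frac{
      P(\mu_m \in B, X_{mN} \in A_m \mid \bmmu_{m - 1})
    }{
      P(X_{mN} \in A_m \mid \bmmu_{m - 1})
    }
    \bigg|_{\bmmu_{m - 1} = \bmnu_{m - 1}},
\]
which is a valid version off the $\cL(\bmY_m, \bmmu_{m - 1})$-null set where $\vrho_N(A_m) = 0$. Applying \eqref{condLawMuX} and \eqref{condLawX} with $n = N$ and $A = A_m$, the common factor $(\ka + m - 1)^{-1}$ cancels. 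Evaluating at $\bmmu_{m - 1} = \bmnu_{m - 1}$ and using $P(X_{mN} \in A_m) = \vrho_N(A_m)$ from \eqref{Y_m-dist}, the denominator becomes $\ka \vrho_N(A_m) + \sum_{i = 1}^{m - 1} \nu_i^N(A_m) = \sum_{i = 1}^m q_i$, with the $q_i$ of \eqref{q^m-defn}.

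For the numerator, the urn's ``old'' contribution $\sum_{i = 1}^{m - 1} 1_B(\nu_i) \nu_i^N(A_m)$ is exactly $\sum_{i = 1}^{m - 1} q_i \de_{\nu_i}(B)$. The ``new'' contribution $\ka E[1_B(\mu_m) \mu_m^N(A_m)]$, in which $\mu_m$ is a fresh draw from the base law $\cD(\ep \vrho)$, I would factor as $\ka \vrho_N(A_m)$ times the ratio $\int 1_B(\nu) \nu^N(A_m) \, \cD(\ep \vrho, d\nu)/\vrho_N(A_m)$; dividing \eqref{DirBayesIn} (with $\al = \ep \vrho$, $n = N$, $A = A_m$) by $\vrho_N(A_m)$ identifies this ratio with the fresh-row posterior \eqref{u^*-def-pt}, so the new contribution equals $q_m$ times \eqref{u^*-def-pt}. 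Dividing the whole numerator by $\sum_{i = 1}^m q_i$ and writing $p_i = q_i/(q_1 + \cdots + q_m)$ then produces \eqref{compGamma}.

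The step I expect to be the main obstacle is the conditional-independence reduction of the first paragraph, together with the measure-theoretic care needed in passing between the random kernels ``$\,\cdot \mid \bmmu_{m - 1}$'' and their evaluation at $\bmmu_{m - 1} = \bmnu_{m - 1}$. Once the discreteness of $Y_m$ is used to justify the ratio as a genuine version of the regular conditional distribution, the remainder is bookkeeping with \eqref{condLawMuX}, \eqref{condLawX}, and the identification of the fresh-draw term via \eqref{DirBayesIn}.
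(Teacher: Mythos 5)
Your proposal is correct and follows essentially the same route as the paper's proof: reduce the conditioning from $\bmY_m$ to $Y_m$ via conditional independence given $\bmmu_{m-1}$, use the discreteness of $Y_m$ to realize $\ga_m$ as the ratio of \eqref{condLawMuX} to \eqref{condLawX} (the common factor $(\ka + m - 1)^{-1}$ cancelling), and identify the fresh-draw term $\ka E[1_B(\mu_m)\mu_m^N(A_m)]$ with $q_m$ times \eqref{u^*-def-pt}. The only cosmetic differences are that the paper justifies the ratio step by hand, via the auxiliary kernel $\vth_m$ in \eqref{th-defn} and a verification against generating rectangles $\{Y_m \in D\} \cap \{\bmmu_{m-1} \in F\}$, where you invoke the standard discrete-conditioning fact, and that the paper identifies the fresh-draw term through the noisy-observation result \eqref{DirNoisy2} where you use the analytic identity \eqref{DirBayesIn} directly; indeed, your joint conditional independence $(\mu_m, X_{mN}) \perp \bmX_{m-1,N} \mid \bmmu_{m-1}$ is the precise form of the independence the paper invokes tersely at the start of its proof.
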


\begin{proof}
  Let $\ga$ be the probability kernel on the right-hand side of
  \eqref{compGamma} and let $B \subseteq M_1$ be Borel. We must show that $P
  (\mu_m \in B \mid \bmY_m, \bmmu_{m - 1}) = \ga(\bmY_m, \bmmu_{m - 1}, B)$.
  Since $\bmX_{m - 1, N}$ and $\mu_m$ are conditionally independent given
  $\bmmu_{m-1}$, it suffices to show that $P(\mu_m \in B \mid Y_m, \bmmu_{m -
  1}) = \ga(\bmY_m, \bmmu_{m - 1}, B)$.

  Define the kernel $\vth_m$ from $T \times M_1^{m - 1}$ to $M_1$ by
  \begin{equation}\label{th-defn}
    \vth_m(y_m, \bmnu_{m - 1}, d\nu_m) = \ka \nu_m^N(A_m) \, \cL(\mu_m; d\nu_m)
      + \sum_{i = 1}^{m - 1} \nu_m^N(A_m) \, \de_{\nu_i}(d\nu_m).
  \end{equation}
  Then \eqref{condLawMuX} gives
  \begin{equation}\label{lawWithTh}
    P(\mu_m \in B, Y_m = y_m \mid \bmmu_{m - 1}) = \frac{
      \vth(y_m, \bmmu_{m - 1}, B)
    }{\ka + m - 1}.
  \end{equation}
  Now let $C \in \si (Y_m, \bmmu_{m - 1})$. Without loss of generality, we may
  assume that $C$ is of the form $C = \{Y_m \in D\} \cap \{\bmmu_{m - 1} \in F\}$
  for some $D \subseteq T$ and some Borel $F \subseteq M_1^{m - 1}$. Then 
  \eqref{lawWithTh}
  gives
  \begin{align*}
    E[1_B(\mu_m) 1_C] &= P(\mu_m \in B, Y_m \in D, \bmmu_{m - 1} \in F)\\
    &= E\bigg[1_F(\bmmu_{m - 1}) \sum_{y_m \in D} {
      P(\mu_m \in B, Y_m = y_m \mid \bmmu_{m - 1})
    }\bigg]\\
    &= E\bigg[1_F(\bmmu_{m - 1}) \sum_{y_m \in D} {
      \frac{\vth(y_m, \bmmu_{m - 1}, B)}{\vth(y_m, \bmmu_{m - 1}, M_1)}
      P(Y_m = y_m \mid \bmmu_{m - 1})
    } \bigg],
  \end{align*}
  where in the last line we have used \eqref{lawWithTh} with $B = M_1$. Hence,
  \begin{align*}
    E[1_B(\mu_m) 1_C] &= \sum_{y_m \in D} E\bigg[1_F(\bmmu_{m - 1}) {
      \frac{\vth(y_m, \bmmu_{m - 1}, B)}{\vth(y_m, \bmmu_{m - 1}, M_1)}
      E[1_{\{y_m\}}(Y_m) \mid \bmmu_{m - 1}]
    } \bigg]\\
    &= \sum_{y_m \in D} E\bigg[ E\bigg[ 1_F(\bmmu_{m - 1}) {
      \frac{\vth(y_m, \bmmu_{m - 1}, B)}{\vth(y_m, \bmmu_{m - 1}, M_1)}
      1_{\{y_m\}}(Y_m) \;\bigg|\; \bmmu_{m - 1}
    } \bigg] \bigg]\\
    &= \sum_{y_m \in D} E\bigg[1_F(\bmmu_{m - 1}) {
      \frac{\vth(y_m, \bmmu_{m - 1}, B)}{\vth(y_m, \bmmu_{m - 1}, M_1)}
      1_{\{y_m\}}(Y_m)
    } \bigg].
  \end{align*}
  We can rewrite this is
  \begin{align*}
    E[1_B(\mu_m) 1_C] &= \sum_{y_m \in D} E\bigg[1_F(\bmmu_{m - 1}) {
      \frac{\vth(Y_m, \bmmu_{m - 1}, B)}{\vth(Y_m, \bmmu_{m - 1}, M_1)}
      1_{\{y_m\}}(Y_m)
    } \bigg]\\
    &= E\bigg[1_F(\bmmu_{m - 1}) {
      \frac{\vth(Y_m, \bmmu_{m - 1}, B)}{\vth(Y_m, \bmmu_{m - 1}, M_1)}
      \sum_{y_m \in D} 1_{\{y_m\}}(Y_m)
    } \bigg]\\
    &= E\bigg[1_F(\bmmu_{m - 1}) {
      \frac{\vth(Y_m, \bmmu_{m - 1}, B)}{\vth(Y_m, \bmmu_{m - 1}, M_1)} 1_D(Y_m)
    } \bigg]\\
    &= E\bigg[
      \frac{\vth(Y_m, \bmmu_{m - 1}, B)}{\vth(Y_m, \bmmu_{m - 1}, M_1)} 1_C
    \bigg].
  \end{align*}
  Hence,
  \[
    P(\mu_m \in B \mid Y_m, \bmmu_{m - 1})
      = \frac{\vth(Y_m, \bmmu_{m - 1}, B)}{\vth(Y_m, \bmmu_{m - 1}, M_1)}.
  \]
  It remains to show that $\ga(\bmy_m, \bmmu_{m - 1}) = \vth(y_m, \bmmu_{m - 1})
  / \vth(y_m, \bmmu_{m - 1}, M_1)$. Note that
  \begin{align*}
    P(\mu_m \in B, Y_m = y_m) &= E[1_B(\mu_m) P(X_{mN} \in A_m \mid \mu_m)]\\
    &= E[1_B(\mu_m) \mu_m^N(A_m)]\\
    &= \int_B \nu_m^N(A_m) \, \cL(\mu_m; d\nu_m),
  \end{align*}
  which shows that
  \[
    \cL(\mu_m \mid Y_m = y_m; d\nu_m)
      = \frac 1{P(Y_m = y_m)} \, \nu_m^N(A_m) \, \cL(\mu_m; d\nu_m).
  \]
  Thus, \eqref{th-defn} becomes
  \[
    \vth(y_m, \bmnu_{m - 1}) = \ka P(Y_m = y_m) \cL(\mu_m \mid Y_m = y_m)
      + \sum_{i = 1}^{m - 1} \nu_i^N(A_m) \de_{\nu_i}.
  \]
  By \eqref{DirNoisy2},
  \[
    \cL(\mu_m \mid Y_m = y_m) = \int_{S^N} {
      \cD\bigg(\ep \vrho + \sum_{n = 1}^N \de_{x_n}\bigg)
    } \, \cL(X_{mN} \mid Y_m = y_m; dx)
  \]
  Since $\{Y_m = y_m\} = \{X_{mN} \in A_m\}$ and $\vrho_N = \cL(X_{mN})$, we
  can combine these last two equations to arrive at
  \[
    \vth(y_m, \bmnu_{m - 1}) = \ka \vrho_N(A_m) \int_{S^N} {
      \cD\bigg(\ep \vrho + \sum_{n = 1}^N \de_{x_n}\bigg)
    } \, \vrho_N(dx \mid A_m) + \sum_{i = 1}^{m - 1} {
      \nu_i^N(A_m) \de_{\nu_i}
    }.
  \]
  It therefore follows from \eqref{compGamma} that $\ga(\bmy_m, \bmmu_{m -
  1}) = \vth(y_m, \bmmu_{m - 1}) / \vth(y_m, \bmmu_{m - 1}, M_1)$.
\end{proof}

\subsection{Generating the simulations}\label{S:genSims}

Having computed $\ga_m$ in Theorem \ref{T:compGamma}, we can now compute the
simulation measure $\fm^*$, described in Section \ref{S:seqImpSim}. In
\eqref{u^*-defn} and \eqref{V-defn}, the terms $u_m^*$ and $V$ depend on $y$. To
emphasize this dependence, we write $u_m^{*, k} = u_m^{*, k}(y)$ and $V_k = V_k
(y)$. Define $\mu_m^{*, k} = u_m^{*, k}(Y)$. Note that $u_m^{*, k}(y)$ is
independent of $Y$, whereas $\mu_m^{*, k}$ is not. The random measure $\mu_m^{*,
k}$ is playing the role of $Z_m^{*, k}$ in Section \ref{S:seqImpSim}. To show
that we have constructed $\mu_m^{*, k}$ correctly, we must show that $\{\bmmu_M^
{*, k}\}_{k = 1}^\infty \mid Y \sim \fm^*(Y)^\infty$. This is done below in
Proposition \ref{P:mu_k^*-iid}.

To prove Proposition \ref{P:mu_k^*-iid}, we use the following explicit
construction of $u_m^{*, k}(y)$. Define $H \subseteq \bR^m$ by $H = [0,
\infty)^m \setminus \{(0, \ldots, 0)\}$. Let
\[
  U = \{U_{mk}(t): 1 \le m \le M, 1 \le k \le K, t \in H\}
\]
be an independent collection of random variables, where $U_{mk}(t)$ takes values
in $\{1, \ldots, m\}$ and satisfies $P(U_{mk}(t) = i) = t_i/(t_1 + \cdots +
t_m)$. Let
\[
  \la = \{\la_{mk}: 1 \le m \le M, 1 \le k \le K\}
\]
be an independent collection of random measures on $S$, where $\la_{mk}$ is a
Dirichlet mixture satisfying
\begin{equation}\label{la_mk-defn}
  \la_{mk} \sim \int_{S^N} {
    \cD\bigg(\ep \vrho + \sum_{n = 1}^N \de_{x_n}\bigg)
  } \, \vrho_N(dx \mid A_m).
\end{equation}
Assume $U$, $\la$, and $Y$ are independent.

Define $t_m^k = (t_{m1}^k, \ldots, t_{mm}^k) \in \bR^m$ and $\th(m, k) \in \{1,
\ldots, m\}$ recursively as follows. Let $t_{11}^k = \ka \vrho_N(A_1)$ and $\th
(1, k) = 1$. For $m > 1$, let
\begin{equation}\label{t^mk-defn}
  t_{mi}^k =\begin{cases}
    \la_{\th(i, k), k}^N(A_m) &\text{if $1 \le i < m$},\\
    \ka \vrho_N(A_m) &\text{if $i = m$},
  \end{cases}
\end{equation}
and
\begin{equation}\label{theta-defn}
  \th(m, k) = \begin{cases}
    \th(U_{mk}(t^{mk}), k) &\text{if $1 \le U_{mk}(t^{mk}) < m$},\\
    m &\text{if $U_{mk}(t^{mk}) = m$}.
  \end{cases}
\end{equation}
With this construction, we may write $u_m^{*, k}(y) = \la_{\th(m, k), k}$.

In the proof of Proposition \ref{P:mu_k^*-iid}, we also use the notation $\cF
\vee \cG = \si(\cF \cup \cG)$, whenever $\cF$ and $\cG$ are $\si$-algebras on a
common set.

\begin{prop}\label{P:mu_k^*-iid}
  Let $\ga_m$ be as in Theorem \ref{T:compGamma} and $\fm^* = \ga_1^* \cdots
  \ga_M^*$, where $\ga_m^*(y, \bmnu_{m - 1}) = \ga_m(\bmy_m, \bmnu_{m - 1})$.
  Then $\{\bmmu_M^{*, k}\}_{k = 1}^\infty \mid Y \sim \fm^*(Y)^\infty$.
\end{prop}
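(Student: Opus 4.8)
The plan is to peel off the two ``global'' sources of randomness---the observation $Y$ and the simulation index $k$---and reduce everything to a single induction on $m$. First I would remove $Y$. Since $U$, $\la$, and $Y$ are independent, and $u_m^{*, k}(y)$ is, for each fixed $y \in T^M$, a fixed measurable function of $(U, \la)$, freezing the independent variable $Y$ shows that the conditional law of $\{\bmmu_M^{*, k}\}_k = \{\bmu_M^{*, k}(Y)\}_k$ given $Y = y$ equals the unconditional law of $\{\bmu_M^{*, k}(y)\}_k$. Hence it suffices to fix $y$ and prove that $\{\bmu_M^{*, k}(y)\}_{k = 1}^\infty$ is i.i.d.\@ with common law $\fm^*(y)$. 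Independence and identical distribution across $k$ are then immediate: for fixed $y$, the vector $\bmu_M^{*, k}(y)$ is built solely from the blocks $\{U_{mk}(\cdot)\}_m$ and $\{\la_{mk}\}_m$ carrying the second index $k$; distinct values of $k$ use disjoint, independent blocks, and the recursion \eqref{t^mk-defn}--\eqref{theta-defn} is identical for each $k$.

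It remains to identify the law of a single $\bmu_M^{*, k}(y)$ with $\fm^*(y)$. Fixing $k$, I abbreviate $u_m^* = u_m^{*, k}(y)$, $\la_m = \la_{mk}$, $\th(m) = \th(m, k)$, $U_m = U_{mk}$, and $t_{mi} = t_{mi}^k$. Because $\fm^* = \ga_1^* \cdots \ga_M^*$ is characterized by the sequential requirement that $\nu_m \mid \nu_1, \ldots, \nu_{m - 1} \sim \ga_m(\bmy_m, \bmnu_{m - 1})$, I would prove by induction on $m$ that $\cL(u_m^* \mid u_1^*, \ldots, u_{m - 1}^*) = \ga_m(\bmy_m, u_1^*, \ldots, u_{m - 1}^*)$, with $\ga_m$ as in Theorem \ref{T:compGamma}. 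Writing $\cG_{m - 1} = \si(u_1^*, \ldots, u_{m - 1}^*)$ and using $u_i^* = \la_{\th(i)}$, the definition \eqref{t^mk-defn} gives $t_{mi} = (u_i^*)^N(A_m)$ for $i < m$ and $t_{mm} = \ka \vrho_N(A_m)$; these are exactly the quantities $q_i^m$ of \eqref{q^m-defn}, so the random point $t^m$ is $\cG_{m - 1}$-measurable. The construction then sets $u_m^* = u_i^*$ when $U_m(t^m) = i < m$ and $u_m^* = \la_m$ (a fresh draw) when $U_m(t^m) = m$.

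The crux of the induction is the evaluation of the selector $U_m$ at the random argument $t^m$. Since $\cG_{m - 1} \subseteq \si(\{U_j : j < m\}, \la)$, the family $\{U_m(t) : t \in H\}$ is independent of $\cG_{m - 1}$, so freezing the argument yields $P(U_m(t^m) = i \mid \cG_{m - 1}) = t_{mi}/(t_{m1} + \cdots + t_{mm}) = p_i$. Moreover, $u_1^*, \ldots, u_{m - 1}^*$ involve only $\la_1, \ldots, \la_{m - 1}$, so $\la_m$ is independent of $(\cG_{m - 1}, U_m)$ and, on $\{U_m(t^m) = m\}$, contributes its Dirichlet-mixture law \eqref{la_mk-defn} independently of the past. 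Combining these, the conditional law of $u_m^*$ given $\cG_{m - 1}$ is $\sum_{i < m} p_i \de_{u_i^*} + p_m \int_{S^N} \cD(\ep \vrho + \sum_{n = 1}^N \de_{x_n}) \, \vrho_N(dx \mid A_m)$, which is precisely $\ga_m(\bmy_m, \bmnu_{m - 1})$ evaluated at $\bmnu_{m - 1} = (u_1^*, \ldots, u_{m - 1}^*)$ by \eqref{compGamma}; coincident atoms among the $u_i^*$ merge correctly because both sides are read as measures. The base case $m = 1$ is immediate, as $u_1^* = \la_1 \sim \ga_1(y_1)$ by \eqref{la_mk-defn}.

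I expect the main obstacle to be the careful justification of the freezing step. One must verify that $\{U_m(t) : t \in H\}$ is genuinely independent of all the data determining $t^m$, and track at each stage of the recursion that $t^m$ is a function of the already-constructed measures $u_1^*, \ldots, u_{m - 1}^*$ (through $(u_i^*)^N(A_m)$) rather than of the auxiliary labels $\th(\cdot)$. Establishing the containment $\cG_{m - 1} \subseteq \si(\{U_j : j < m\}, \la)$ cleanly---so that $U_m$ and $\la_m$ are each seen to be independent of the relevant past---is the part that needs the most bookkeeping, and it is where I would spend the bulk of the rigorous proof.
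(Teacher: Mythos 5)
Your proof is correct and follows essentially the same route as the paper's: both arguments hinge on the identification $t_{mi}^k = q_i^m(\bmu_{m-1}^{*,k}(y))$ together with the mutual independence of $U_{mk}$, $\la_{mk}$, and the previously constructed measures, and both exploit the discreteness of $Y$ to freeze it---the paper merely interleaves the freezing of $Y$ into the verification of each conditional kernel identity $\mu_m^{*,k} \mid Y, \bmmu_{m-1}^{*,k} \sim \ga_m(\bmY_m, \bmmu_{m-1}^{*,k})$ via test events $\{Y \in D\} \cap \{\bmmu_{m-1}^{*,k} \in F\}$, rather than fixing $y$ once at the outset. As a minor bonus, you make explicit the conditional i.i.d.\@ property across $k$ (distinct $k$ use disjoint independent blocks of $U$ and $\la$), which the paper's proof leaves implicit.
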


\begin{proof}
  As noted in \eqref{sim-iter}, it suffices to show that $\mu_m^{*, k} \mid Y,
  \bmmu_{m - 1}^{*, k} \sim \ga_m(\bmY_m, \bmmu_{m - 1}^{*, k})$.

  We first note that if $\cF_{mk} = \si(U_{1k}, \ldots, U_{mk}, \la_{1k},
  \ldots, \la_{mk})$, then $t_m^k$ is $\cF_{m - 1, k}$-measurable and $\th(m,
  k)$ is $\cF_ {m - 1, k} \vee \si(U_{mk})$-measurable. This follows from
  \eqref{t^mk-defn} and \eqref{theta-defn} by induction. Also, by
  \eqref{theta-defn}, we have
  \begin{equation}\label{SI-NDP-pf2}
    u_m^{*, k}(y) = \begin{cases}
      u_i^{*, k}(y) &\text{if $U_{mk}(t^{mk}) = i < m$},\\
      \la_{mk}        &\text{if $U_{mk}(t^{mk}) =  m$}.
    \end{cases}
  \end{equation}
  Hence, $u_m^{*, k}(y)$ is $\cF_{mk}$-measurable. In particular, $U_{mk}$,
  $\la_ {mk}$, and $\bmu_{m - 1}^{*, k}(y)$ are independent.

  Now let $B \subseteq M_1$ be Borel and let $C \in \si (Y, \bmmu_{m - 1}^{*,
  k})$. Without loss of generality, we may assume that $C$ is of the form $C =
  \{Y \in D\} \cap \{\bmmu_{m - 1}^{*, k} \in F\}$ for some $D \subseteq T^M$
  and some Borel $F \subseteq M_1^{m - 1}$. We then have
  \begin{align}
    E[1_B(\mu_m^{*, k}) 1_C]
      &= P(Y \in D, \bmmu_m^{*, k} \in F \times B)\notag\\
    &= \sum_{y \in D} P(Y = y, \bmu_m^{*, k}(y) \in F \times B)\notag\\
    &= \sum_{y \in D} P(Y = y) P(\bmu_m^{*, k}(y) \in F \times B)\notag\\
    &= \sum_{y \in D} P(Y = y) E[1_F(\bmu_{m - 1}^{*, k}(y)) P(
      u_m^{*, k}(y) \in B \mid \bmu_{m - 1}^{*, k}(y)
    )] \label{SI-NDP-pf1}
  \end{align}
  Using \eqref{SI-NDP-pf2}, we obtain
  \begin{multline*}
    P(u_m^{*, k}(y) \in B \mid \bmu_{m - 1}^{*, k}(y)) = P(
      U_{mk}(t_m^k) = m, \la_{mk} \in B
    \mid
      \bmu_{m - 1}^{*, k}(y)
    )\\
    + \sum_{i = 1}^{m - 1} P(
      U_{mk}(t_m^k) = i, u_i^{*, k}(y) \in B
    \mid
      \bmu_{m - 1}^{*, k}(y)
    ).
  \end{multline*}
  From \eqref{t^mk-defn} and \eqref{q^m-defn}, it follows that $t_{mi}^k =
  q_i^m(\bmu_{m - 1}^{*, k}(y))$. Since $U_{mk}$, $\la_{mk}$, and $\bmu_{m -
  1}^{*, k}(y)$ are independent, the above becomes
  \[
    P(u_m^{*, k}(y) \in B \mid \bmu_{m - 1}^{*, k}(y))
      = p_m^m(\bmu_{m - 1}^{*, k}(y)) P(\la_{mk} \in B)
      + \sum_{i = 1}^{m - 1} {
        p_i^m(\bmu_{m - 1}^{*, k}(y)) \de_{u_i^{*, k}(y)}(B)
      }.
  \]
  It follows from \eqref{la_mk-defn} and \eqref{compGamma} that
  \[
    P(u_m^{*, k}(y) \in B \mid \bmu_{m - 1}^{*, k}(y))
      = \ga_m(\bmy_m, \bmu_{m - 1}^{*, k}(y), B).
  \]
  Substituting this into \eqref{SI-NDP-pf1} and noting that $\bmu_M^{*, k}(y)$
  and $Y$ are independent, we have
  \begin{align*}
    E[1_B(\mu_m^{*, k}) 1_C] &= \sum_{y \in D} P(Y = y) {
      E[1_F(\bmu_{m - 1}^{*, k}(y)) \ga_m(\bmy_m, \bmu_{m - 1}^{*, k}(y), B)]
    }\\
    &= \sum_{y \in D} E[1_{\{y\}}(Y) 1_F(\bmu_{m - 1}^{*, k}(y))\ga_m(
      \bmy_m, \bmu_{m - 1}^{*, k}(y), B
    )]\\
    &= \sum_{y \in D} E[1_{\{y\}}(Y) 1_F(\bmmu_{m - 1}^{*, k}) \ga_m(
      \bmY_m, \bmmu_{m - 1}^{*, k}, B
    )]\\
    &= E[1_D(Y) 1_F(\bmmu_{m - 1}^{*, k}) \ga_m(
      \bmY_m, \bmmu_{m - 1}^{*, k}, B
    )]\\
    &= E[\ga_m(\bmY_m, \bmmu_{m - 1}^{*, k}, B) 1_C],
  \end{align*}
  showing that $P(\mu_m^{*, k} \in B \mid Y, \bmmu_{m - 1}^{*, k}) =
  \ga_m(\bmY_m, \bmmu_{m - 1}^{*, k}, B)$.
\end{proof}

\subsection{Proof of the main result}\label{S:pfSI-NDP}

Having established Theorem \ref{T:compGamma} and Proposition 
\ref{P:mu_k^*-iid}, we are now ready to prove the main result.

\begin{proof}[Proof of Theorem \ref{T:seqImpNDP}]
  We apply Theorem \ref{T:seqImpGen}. Let $\ga_m$ and $\fm^*$ be as in
  Proposition \ref{P:mu_k^*-iid}.

  If $\wt \fn$ is counting measure on $T$ and $\fn_m = \cL(\bmmu_m)$, then
  $\cL(Y, \bmmu_m) \ll \wt \fn^M \times \fn_m$, so that Assumption
  \ref{A:densYexist} holds. Let $f_m$ be the density of $(Y, \bmmu_m)$ with
  respect to $\wt \fn^M \times \fn_m$, and recall the notational conventions of
  Section \ref{S:sim-dens}. Let $w(y, \nu)$ be given by \eqref{weightFunc}.

  Let $\bmmu_M^{*, k}$ be as in Proposition \ref{P:mu_k^*-iid}, so that
  $\{\bmmu_M^{*, k}\}_{k = 1}^\infty \mid Y \sim \fm^*(Y)^\infty$. We define the
  weights $W_k = w(Y, \bmmu_M^{*, k})$. We first prove that $W_k = V_k(Y)$,
  where, according to \eqref{V-defn}, we have
  \begin{equation}\label{V^k(y)-defn}
    V_k(y) = \prod_{m = 1}^M \frac 1{\ka + m - 1} \sum_{i = 1}^m t_{mi}^k.
  \end{equation}
  Note that
  \[
    f_1(y_1) = P(Y_1 = y_1) = \vrho_N(A_1).
  \]
  For the other factors in \eqref{weightFunc}, we use \eqref{condLawX} and
  \eqref{Y_m-dist}, and the fact that $\bmY_m$ and $Y_{m + 1}$ are conditionally
  independent given $\bmmu_m$ to obtain
  \begin{align*}
    P(Y_{m + 1} = y_{m + 1} \mid \bmY_m, \bmmu_m)
      &= P(X_{m + 1, N} \in A_{m + 1} \mid \bmmu_m)\\
    &= \frac 1 {\ka + m} \bigg(
      \ka \vrho_N(A_{m + 1}) + \sum_{i = 1}^m \mu_i^N(A_{m + 1})
    \bigg),
  \end{align*}
  so that
  \[
    f_m(y_{m + 1} \mid \bmy_m, \bmnu_m)
      = \frac \ka{\ka + m} \vrho_N(A_{m + 1})
      + \frac 1{\ka + m} \sum_{i = 1}^m \nu_i^N(A_{m + 1}).
  \]
  Substituting this into \eqref{weightFunc} gives
  \[
    w(y, \nu) = \vrho_N(A_1) \prod_{m = 1}^{M - 1} \bigg(
      \frac \ka{\ka + m} \vrho_N(A_{m + 1})
      + \frac 1{\ka + m} \sum_{i = 1}^m \nu_i^N(A_{m + 1})
    \bigg),
  \]
  which can be rewritten as
  \begin{equation}\label{NDP-weights}
    w(y, \nu) = \prod_{m = 1}^M \bigg(
      \frac \ka{\ka + m - 1} \vrho_N(A_m)
      + \frac 1{\ka + m - 1} \sum_{i = 1}^{m - 1} \nu_i^N(A_m)
    \bigg).
  \end{equation}
  In the proof of Proposition \ref{P:mu_k^*-iid}, we noted that $t_{mi}^k =
  q_i^m(\bmu_{m - 1}^{*, k}(y))$. Hence, by \eqref{V^k(y)-defn} and 
  \eqref{q^m-defn}, we have
  \begin{align*}
    V_k(y) &= \prod_{m = 1}^M {
      \frac 1{\ka + m - 1} \sum_{i = 1}^m q_i^m(\bmu_{m - 1}^{*, k}(y))
    }\\
    &= \prod_{m = 1}^M {
      \frac 1{\ka + m - 1} \bigg(
        \ka \vrho_N(A_m) + \sum_{i = 1}^{m - 1} (u_i^{*, k}(y))^N(A_m)
      \bigg)
    }.
  \end{align*}
  It follows from \eqref{NDP-weights} that $V_k(y) = w(y, \bmu_M^{*, k}(y))$, so
  that $W_k = V_k(Y)$.

  Finally, we construct $\wt \bmmu_M^K$ so that
  \begin{equation}\label{seqImpNDP3}
    \wt \bmmu_M^K \mid \bmmu_M^{*, 1}, \ldots, \bmmu_M^{*, K}, Y
      \propto \sum_{k = 1}^K W_k \de(\bmmu_M^{*, k}).
  \end{equation}
  Since $f_M(y) = P(Y = y)$, we have
  \[
    \int_{T^M} f_M(y)^2 \, \wt \fn^M(dy) = \sum_{y \in T^M} P(Y = y)^2
      \le \sum_{y \in T^M} P(Y = y) = 1,
  \]
  so that $f_M(y) \in L^2(\wt \fn^M)$. Hence, by Theorem \ref{T:seqImpGen},
  \[
    \cL(\bmmu_M \mid Y) = \lim_{K \to \infty} {
      \cL(\wt \bmmu_M^K \mid \bmmu_M^{*, 1}, \ldots, \bmmu_M^{*, K}, Y)
    }.
  \]
  Applying \eqref{seqImpNDP3} to the above gives
  \[
    \cL(\bmmu_M \mid Y) = \lim_{K \to \infty} \frac{
      \sum_{k = 1}^K W_k \de(\bmmu_M^{*, k})
    }{
      \sum_{k = 1}^K W_k
    }.
  \]
  Since $W_k = V_k(Y)$ and $\bmmu_M^{*, k} = \bmu_M^{*, k}(Y)$, this proves
  \eqref{seqImpNDP1}, and \eqref{seqImpNDP2} follows immediately.
\end{proof}

\section{Examples}\label{S:examples}

In this section, we present four hypothetical applications to illustrate the use
of the Theorem \ref{T:seqImpNDP} and Corollary \ref{C:seqImpNDP}. See
\url{https://github.com/jason-swanson/ndp} for the code used to generate the
simulations.

The framework for each of these examples was described in Section \ref{S:intro}.
In that framework, we interpret $\xi_{ij}$ as the $j$th action of the $i$th
agent. The space $S$ is therefore the set of possible actions.

All the examples in this section involve a finite state space $S$. But, as we
describe in Remark \ref{R:finObs}, this special case is easily generalized to
the case of an arbitrary $S$ in which our observations are made with limited
precision.

The outline of this section is as follows. In Section \ref{S:finNDP}, we
describe how Theorem \ref{T:seqImpNDP} and Corollary \ref{C:seqImpNDP} simplify
in the case that $S$ is finite. After that, the remainder of the section is
devoted to the examples. Our simplest example is in Section \ref{S:pennies}, and
concerns a malfunctioning pressed penny machine. Section \ref{S:tacks} presents
a similar example, but with significantly more data. This is the same example
treated in \cite{Liu1996} (originally considered in \cite{Beckett1994}) and is
concerned with the flicking of thumbtacks. Section \ref{S:reviews} applies the
NDP model to the analysis of Amazon reviews. The final example, found in Section
\ref{S:lboards}, is about video game leaderboards. To prepare for that example,
a custom prior distribution, which we call the ``gamer'' distribution, is
presented in Section \ref{S:gamer}.

\subsection{The case of a finite state space}\label{S:finNDP}

Let $L \ge 2$ be an integer and suppose that $S = \{0, \ldots, L - 1\}$. Let
$p_\l = \vrho(\{\l\})$, so that we may identify $\vrho$ with the vector $\bmp =
(p_0, \ldots, p_{L - 1})$. We assume that $p_\l > 0$ for all $\l \in S$.

Let $y = \{y_{mn}: 1 \le m \le M, 1 \le n \le N_m\}$ be a jagged array of
elements in $S$. The array $y$ denotes our observed data. That is, we observe
$\xi_{mn} = y_{mn}$ for $1 \le m \le M$ and $1 \le n \le N_m$, and we wish to
compute the conditional distribution of $\xi$ given these observations. Define
the row counts $\ol y = \{\ol y_{m\l}: 1 \le m \le M, 0 \le \l \le L - 1\}$ by
$\ol y_{m\l} = |\{n: y_{mn} = \l\}|$. Since $\xi$ is row exchangeable, all of
our calculations will depend on $y$ only through the array $\ol y$. We use $\ol
y_m$ to denote the vector $(\ol y_{m1}, \ldots, \ol y_{m, L - 1})$.

To apply Theorem \ref{T:seqImpNDP}, let $N = \max \{N_1, \ldots, N_M\}$ and $T =
\bigcup_{n = 1}^N S^n$. Let $\ph_m: S^N \to T$ be the projection onto the first
$N_m$ components, so that $\ph_m(x_1, \ldots, x_N) = (x_1, \ldots, x_ {N_m})$.
Then $Y = (Y_1, \ldots, Y_M)$, where $Y_m = \ph_m(X_{mN}) = X_{mN_m}$. Note that
$A_m = \{Y_m = y_m\} = \{X_{mN_m} = y_m\}$. Therefore, if we define $\th_{m\l} =
\mu_m(\{\l\})$, then the prior likelihoods satisfy
\[
  \vrho_N(A_m) = P(X_{mN_m} = y_m) = E[P(X_{mN_m} = y_m \mid \mu_m)]
    = E\bigg[\prod_{\l = 0}^{L - 1} \th_{m\l}^{\ol y_{m\l}}\bigg].
\]
From \eqref{DirProcFDD} it follows that $(\th_{m0}, \ldots, \th_{m,L - 1}) \sim
\Dir(\ep p_0, \ldots, \ep p_{L - 1})$. This gives
\begin{equation}\label{likelihoods}
  \vrho_N(A_m)
    = E\bigg[\prod_{\l = 0}^{L - 1} \th_{m\l}^{\ol y_{m\l}}\bigg]
    = \frac 1 {B(\ep \bmp)} \int_{\De^{L - 1}} {
      \prod_{\l = 0}^{L - 1} t_\l^{\ol y_{m\l} + \ep p_\l - 1}
    } \, dt
    = \frac {B(\ep \bmp + \ol y_m)} {B(\ep \bmp)},
\end{equation}
where $B(\bmx) = \Ga(\sum_{\l = 0}^L x_\l) ^{-1} \prod_{\l = 0}^L \Ga(x_\l)$ is
the multivariate Beta function.

Having computed the prior likelihoods, we turn our attention to the weighted
simulations. From \eqref{DirNoisy2}, it follows that
\eqref{u^*-def-pt} is equal to $\cL(\mu_m \mid Y_m = y_m)$. But $Y_m =
X_{mN_m}$, so by \eqref{Dir-Bayes} we can rewrite \eqref{t-defn} and
\eqref{u^*-defn} as
\[
  t_{mi} =\begin{cases}
    \prod_{n = 1}^{N_m} u_i^*(y_{mn}) &\text{if $1 \le i < m$},\\
    \ka \vrho_N(A_m) &\text{if $i = m$},
  \end{cases}
\]
and
\begin{equation}\label{simRowDist}
  \cL(u_m^* \mid \bmu_{m - 1}^*) \propto t_{mm} {
    \cD\bigg(\ep \vrho + \sum_{n = 1}^{N_m} \de_{y_{mn}}\bigg)
  } + \sum_{i = 1}^{m - 1} t_{mi} \de_{u_i^*}.
\end{equation}
If we define $\th_{m\l}^* = u_m^*(\{\l\})$, then we can rewrite the row weights
as
\[
  t_{mi} =\begin{cases}
    \prod_{\l = 0}^{L - 1} (\th_{i\l}^*)^{\ol y_{m\l}}
      &\text{if $1 \le i < m$},\\
    \ka \vrho_N(A_m) &\text{if $i = m$}.
  \end{cases}
\]
In this case, we can identify $u_m^*$ with the vector $\th_m^* = (\th_{m0}^*,
\ldots, \th_{m, L - 1}^*)$, and \eqref{simRowDist} becomes
\[
  \cL(\th_m^* \mid \bmth_{m - 1}^*) \propto t_{mm} \Dir(\ep \bmp + \ol y_m)
    + \sum_{i = 1}^{m - 1} t_{mi} \de_{\th_i^*},
\]
where $\bmth_m^* = (\th_1^*, \ldots, \th_m^*)$. In other words, for $i < m$, we
have $\th_m^* = \th_i^*$ with probabilty $t_{mi}$, and, with probability $t_
{mm}/(t_{m1} + \cdots + t_{mm})$, the random vector $\th_m^*$ is independent of
$\bmth_{m - 1}^*$ and has the Dirichlet distribution, $\Dir(\ep \bmp + \ol
y_m)$.

Finally, we define $V$, the total weight of the simulation. According to 
\eqref{V-defn}, the total weight should be
\begin{equation}\label{oldweight}
  \prod_{m = 1}^M \frac 1{\ka + m - 1} \sum_{i = 1}^m t_{mi}.
\end{equation}
But in Theorem \ref{T:seqImpNDP}, we see that the weights are all relative to
their sum, so we are free to multiply this value by any constant that does not
depend on $k$. Leaving it as it is will produce a very small number, on the
order of $1/M!$. For computational purposes, then, we multiply \eqref{oldweight}
by $c^M M!$, where $c$ is a nonrandom constant. The total weight of our
simulation is then
\begin{equation}\label{scaledWt}
  V = \prod_{m = 1}^M \frac {cm}{\ka + m - 1} \sum_{i = 1}^m t_{mi}
\end{equation}
We call $\log c$ the \emph{log scale factor} of the simulation. In the examples
covered later in this section, we used $c = 1$ unless otherwise specified.

Now, if $\th = (\th_{m\l}) \in \bR^{M \times L}$ and $\Phi: \bR^{M \times L} \to
\bR$ is continuous, then \eqref{seqImpNDP2} gives
\begin{equation}\label{agent}
  \cL(\Phi(\th) \mid Y = y) = \lim_{K \to \infty} \frac{
    \sum_{k = 1}^K V_k \de(\Phi(\bmth_M^{*, k}))
  }{
    \sum_{k = 1}^K V_k
  }.
\end{equation}
Similarly, if $\Phi: \bR^L \to \bR$ is continuous, then \eqref{lawHighRow2}
gives
\begin{equation}\label{newAgent}
  \cL(\Phi(\th_{M + 1}) \mid Y = y)
    = \lim_{K \to \infty} \frac 1{\ka + M} \left({
      \ka \Dir(\ep \bmp) \circ \Phi^{-1}
      + \sum_{m = 1}^M \frac{
        \sum_{k = 1}^K V_k \de(\Phi(\th_m^{*, k}))
      }{
        \sum_{k = 1}^K V_k
      }
    }\right).
\end{equation}

\begin{rmk}
  In the case $S = \{0, 1\}$, the base measure $\vrho$ is entirely determined by
  the number $p = \vrho(\{1\})$, and we may define a single row count for each
  row, $\ol y_m = |\{n: y_{mn} = 1\}|$. In this case, letting $a = \ep p$ and $b
  = \ep(1 - p)$, we can rewrite \eqref{likelihoods} as
  \[
    \vrho_N(A_m) = \frac{B(a + \ol y_m, b + N_m - \ol y_m)}{B(a, b)}.
  \]
  Defining $\th_m^* = u_m^*(\{1\})$, the row weights of the weighted simulations
  become
  \[
    t_{mi} =\begin{cases}
      (\th_i^*)^{\ol y_m}(1 - \th_i^*)^{N_m - \ol y_m}
        &\text{if $1 \le i < m$},\\
      \ka \vrho_N(A_m) &\text{if $i = m$},
    \end{cases}
  \]
  and \eqref{simRowDist} becomes
  \[
    \cL(\th_m^* \mid \bmth_{m - 1}^*)
      \propto t_{mm} \Bet(a + \ol y_m, b + N_m - \ol y_m)
      + \sum_{i = 1}^{m - 1} t_{mi} \de_{\th_i^*}.
  \]
\end{rmk}

\begin{rmk}\label{R:finObs}
  Let us return for the moment to the general setting, where $S$ is an arbitrary
  complete and separable metric space. Let $S'$ be another complete and
  separable metric space and let $\psi: S \to S'$ be measurable. Let $\xi' =
  \{\xi'_{ij}\}$, where $\xi'_{ij} = \psi(\xi_{ij})$. It is straightforward to
  verify that $\xi'$ is a row exchangeable array of $S'$-valued random variables
  whose row distribution generator $\vpi'$ satisfies $\vpi' \sim \cD(\ka \cD(\ep
  \vrho'))$, where $\vrho' = \vrho \circ \psi^{-1}$.

  We can apply this to $S' = \{0, \ldots, L - 1\}$, where $L \ge 2$. For each
  $\l \in S' = \{0, \ldots, L - 1\}$, choose $B_\l \in \cS$ so that $\{B_\l: \l
  \in S'\}$ is a partition of $S$. Define $\psi: S \to S'$ by $\psi = \sum_{\l =
  0}^{L - 1} \l 1_{B_\l}$ and let $\xi' = \{\xi'_{ij}\}$ where $\xi'_{ij} =
  \psi(\xi_{ij})$. Suppose we can only observe the process $\xi'$. That is, we
  can only observe the values of $\xi$ with enough precision to tell which piece
  of the partition those values lie in. Based on some set of these observations,
  we wish to make probabilistic inferences about $\xi'$. Since $\xi'$ is an
  array of samples from an NDP on $S'$, we may do this using the simplified
  formulas in this section.
\end{rmk}

\subsection{The pressed penny machine}\label{S:pennies}

Imagine a pressed penny machine, like those found in museums or tourist
attractions. For a fee, the machine presses a penny into a commemorative
souvenir. Now imagine the machine is broken, so that it mangles all the pennies
we feed it. Each pressed penny it creates is mangled in its own way. Each has
its own probability of landing on heads when flipped. In this situation, the
agents are the pennies and the actions are the heads and tails that they
produce.

Now suppose we create seven mangled pennies and flip each one 5 times, giving us
the results in Table \ref{Tb:coins}. Of the 35 flips, 23 of them (or about
65.7\%) were heads. In fact, 6 of the 7 coins landed mostly on heads. The
machine clearly seems predisposed to creating pennies that are biased towards
heads.

\begin{table}
  \centering
  \begin{tabular}{|c|c|c|c|c|c|}
    \hline
    \textbf{Coin \#}  & \textbf{1st Flip} & \textbf{2nd Flip} &
    \textbf{3rd Flip} & \textbf{4th Flip} & \textbf{5th Flip} \\ \hline
    1 & H & H & H & H & T \\ \hline
    2 & H & T & H & H & H \\ \hline
    3 & T & H & H & T & H \\ \hline
    4 & H & H & T & H & H \\ \hline
    5 & T & T & T & H & T \\ \hline
    6 & T & H & H & H & H \\ \hline
    7 & H & T & T & H & H \\ \hline
  \end{tabular}
  \caption{Results of flipping seven different mangled pennies}
  \label{Tb:coins}
\end{table}

Coin 5, though, produced only one head. Is this coin different from the others
and actually biased toward tails? Or was it mere chance that its flips turned
out that way? For instance, suppose all 7 coins had a 60\% chance of landing on
heads. In that case, there would still be a 43\% chance that at least one of
them would produce four tails. How should we balance these competing
explanations and arrive at some concrete probabilities?

One way to answer this is to model the example with an NDP as in Section
\ref{S:finNDP}. We take $L = 2$, so that $S = \{0, 1\}$, where $0$ represents
tails and $1$ represents heads. We then take $\ka = \ep = 1$ and $p_0 = p_1 =
1/2$. From the table above, we have $M = 7$, $N_m = 5$ for all $m$, and
\[
  y = \begin{pmatrix}
    1 & 1 & 1 & 1 & 0 \\
    1 & 0 & 1 & 1 & 1 \\
    0 & 1 & 1 & 0 & 1 \\
    1 & 1 & 0 & 1 & 1 \\
    0 & 0 & 0 & 1 & 0 \\
    0 & 1 & 1 & 1 & 1 \\
    1 & 0 & 0 & 1 & 1 
  \end{pmatrix}.
\]
With $K = 10000$, we generated the weighted simulations $(t^k, \bmth_7^{*, k})$
for $1 \le k \le K$, and computed their corresponding total weights, $V_k$. In
this case, the effective sample size of our simulations (denoted by $K_\ep''$ in
Section \ref{S:ess}) was approximately $6067$.

Before addressing Coin 5 directly, let us ask a different question. If we were
to get a new coin from this machine, how would we expect it to behave? The new
coin would have some random probability of heads, which is denoted by $\th_
{8, 1}$. Taking $\Phi: \bR^2 \to \bR$ to be the projection, $\Phi(x_0, x_1) =
x_1$, we can use \eqref{newAgent} to approximation the distribution of
$\th_{8, 1}$, giving $\cL(\th_{8, 1} \mid Y = y) \approx \nu$, where
\[
  \nu = \frac18 \bigg(\Bet(1/2, 1/2)
    + \sum_{m = 1}^7 \frac{
      \sum_{k = 1}^{10000} V_k \de(\th_{m, 1}^{*, k})
    }{
      \sum_{k = 1}^{10000} V_k
    }\bigg).
\]
Using this, we have $P(\xi_{8, 1} = 1 \mid Y = y) = E[\th_{8, 1} \mid Y = y]
\approx 0.633$, so that, given our observations, the first flip of a new coin
has about a 63.3\% chance of landing heads. To visualize the distribution of
$\th_{81}$ rather than simply its mean, we can plot the distribution function of
$\nu$. See Figure \ref{F:coins}(a) for a graph of $x \mapsto \nu((0, x])$.

\begin{figure}
  \centering
  \begin{subfigure}[b]{0.45\linewidth}
    \includegraphics[width=\linewidth]{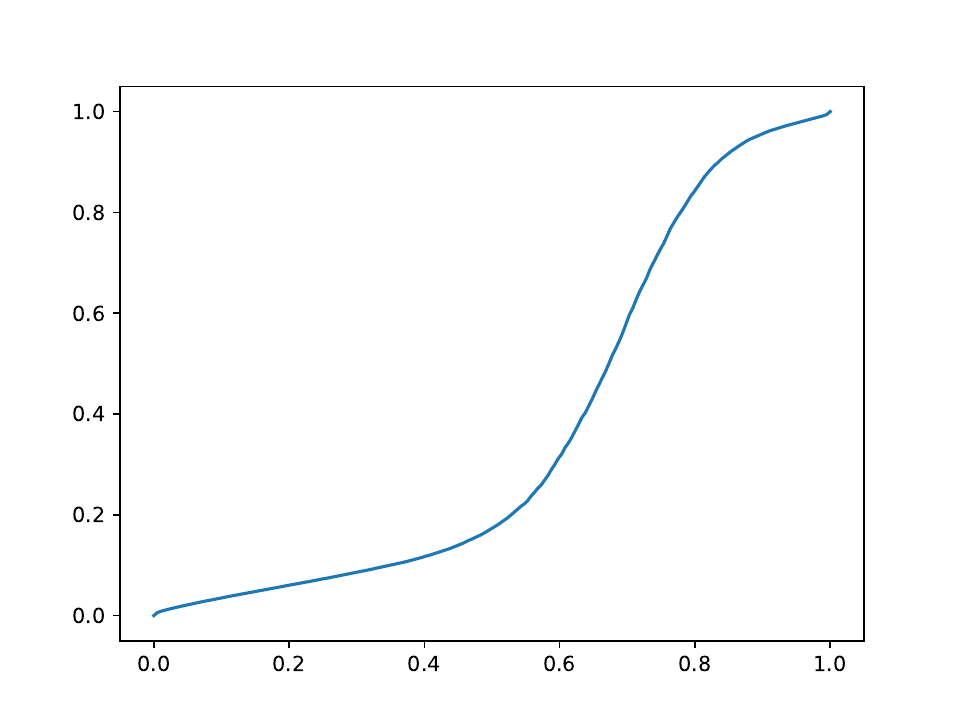}
    \caption{distribution function of $\th_{8, 1}$}
  \end{subfigure}
  \begin{subfigure}[b]{0.45\linewidth}
    \includegraphics[width=\linewidth]{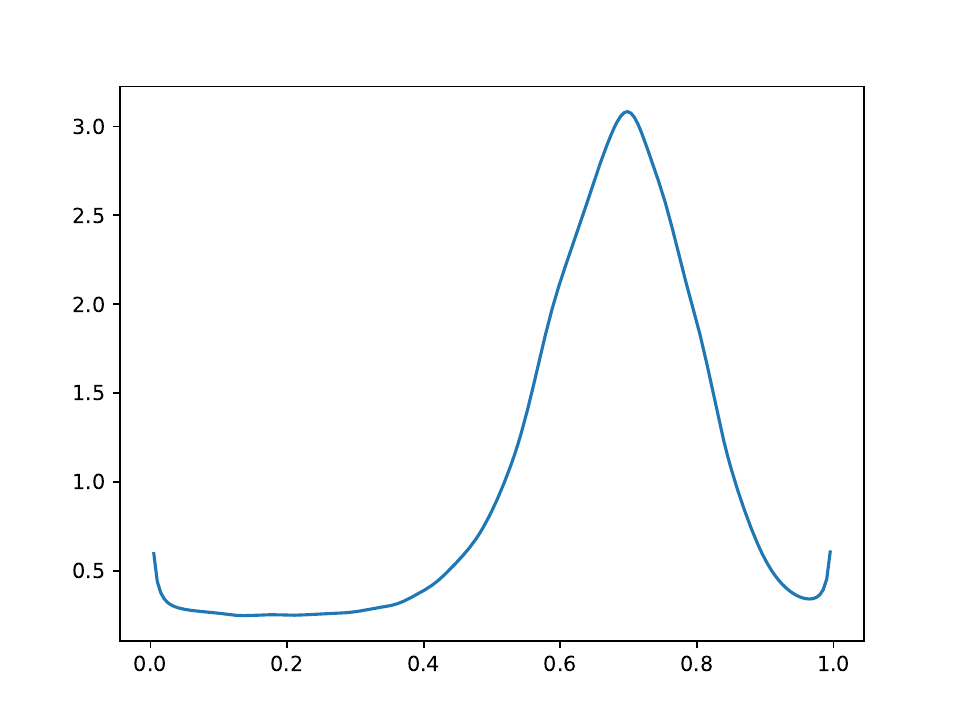}
    \caption{density of $\th_{8, 1}$ with $h \approx 0.119$}
  \end{subfigure}
  \begin{subfigure}[b]{0.45\linewidth}
    \includegraphics[width=\linewidth]{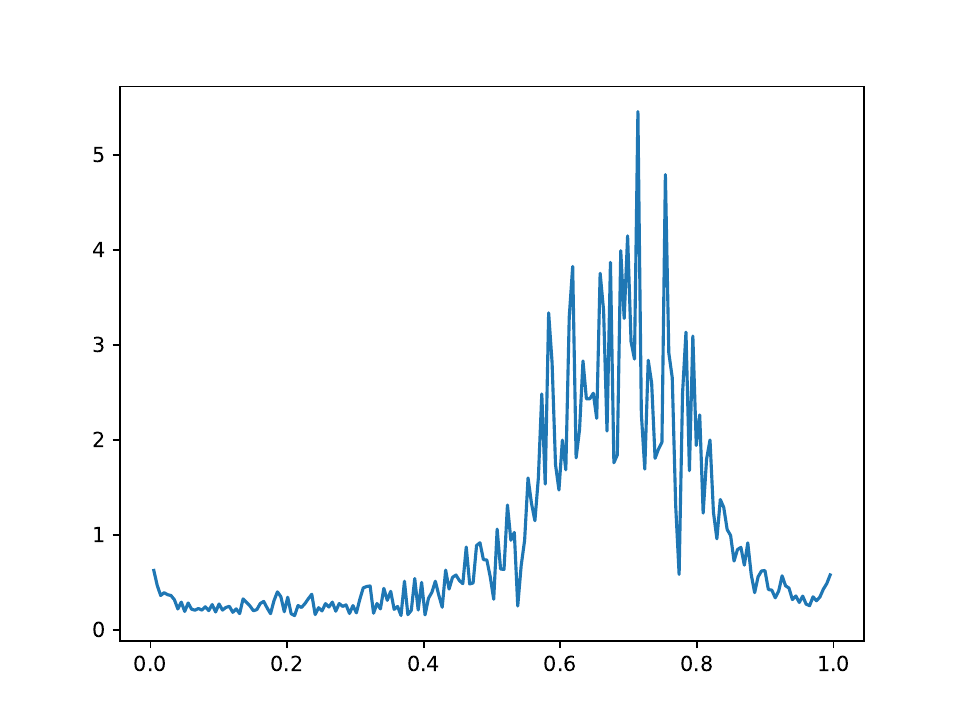}
    \caption{density of $\th_{8, 1}$ with $h = 0.001$}
  \end{subfigure}
  \begin{subfigure}[b]{0.45\linewidth}
    \includegraphics[width=\linewidth]{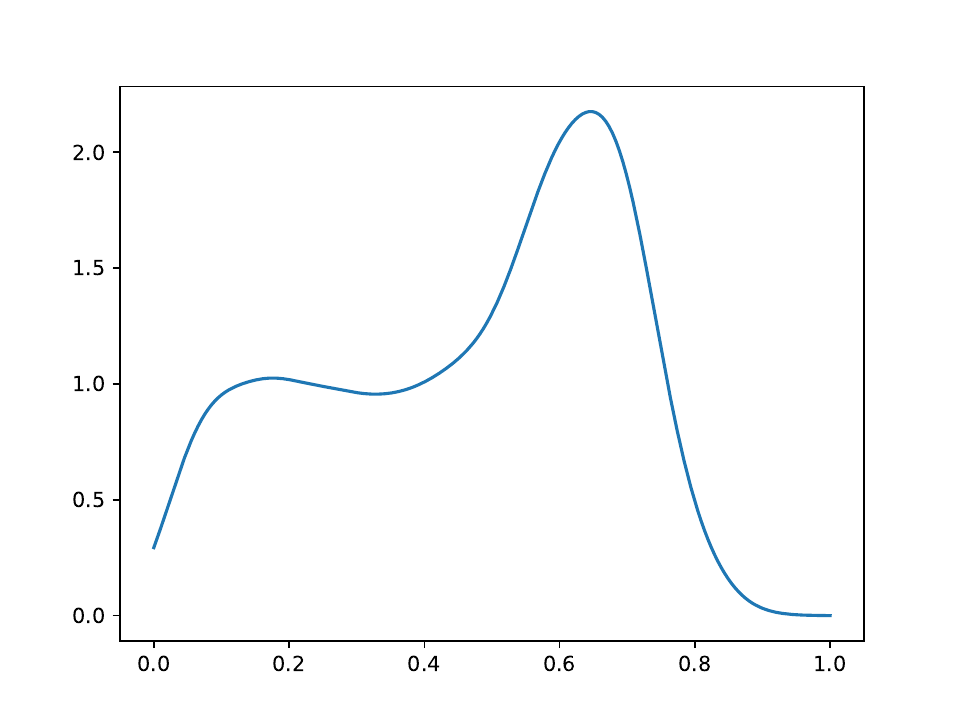}
    \caption{density of $\th_{5, 1}$}
  \end{subfigure}
  \caption{Approximate distribution and density functions for $\th_{m, \l}$}
  \label{F:coins}
\end{figure}

For a different visualization, we can plot an approximate density for $\nu$. The
measure $\nu$ has a discrete component, so we obtain an approximate density
using Gaussian kernel density estimation, replacing each point mass $\de_x$ by a
Gaussian measure with mean $x$ and standard deviation $h$, where $h$ is the
``bandwidth'' of the density estimation. For the measure $\nu$, we used Python's
\texttt{scipy.stats.gaussian\_kde} class to compute the bandwidth according to
Scott's Rule (see \cite{Scott1992}). In this case, we obtained $h \approx
0.119$, yielding the graph in Figure \ref{F:coins}(b). For a coarser estimate,
see Figure \ref{F:coins}(c), which uses $h = 0.001$. In all the remaining
examples in this section, we will default to using the bandwidth determined by
Scott's Rule.

Turning back to the question of Coin 5, if we define $\Phi: \bR^{7 \times 2}
\to \bR$ by $\Phi((x_{m, \l})) = x_{5, 1}$, then \eqref{agent} gives
\[
  \cL(\th_{5, 1} \mid Y = y) \approx \frac{
    \sum_{k = 1}^{10000} V_k \de(\th_{5, 1}^{*, k})
  }{
    \sum_{k = 1}^{10000} V_k
  }
\]
An approximate density for this measure is given in Figure \ref{F:coins}(d).
Using this, we can compute the probability that a sixth flip of Coin 5 lands on
heads, which is
\[
  P(\xi_{5, 6} = 1 \mid Y = y) = E[\th_{5, 1} \mid Y = y] \approx 0.461.
\]
We can also compute the probability that Coin 5 is biased toward tails, which is
given by $P(\th_{51} < 1/2 \mid Y = y) \approx 0.481$.

\subsection{Flicking thumbtacks}\label{S:tacks}

In \cite{Liu1996}, the following situation is considered. Imagine a box of 320
thumbtacks. We flick each thumbtack 9 times. If it lands point up, we call it a
success. Point down is a failure. Because of the imperfections, each thumbtack
has its own probability of success. The results (that is, the number of
successes) for these 320 thumbtacks are given by
\begin{align*}
  r = (
    & 7, 4, 6, 6, 6, 6, 8, 6, 5, 8, 6, 3, 3, 7, 8, 4,
      5, 5, 7, 8, 5, 7, 6, 5, 3, 2, 7, 7, 9, 6, 4, 6,\\
    & 4, 7, 3, 7, 6, 6, 6, 5, 6, 6, 5, 6, 5, 6, 7, 9,
      9, 5, 6, 4, 6, 4, 7, 6, 8, 7, 7, 2, 7, 7, 4, 6,\\
    & 2, 4, 7, 7, 2, 3, 4, 4, 4, 6, 8, 8, 5, 6, 6, 6,
      5, 3, 8, 6, 5, 8, 6, 6, 3, 5, 8, 5, 5, 5, 5, 6,\\
    & 3, 6, 8, 6, 6, 6, 8, 5, 6, 4, 6, 8, 7, 8, 9, 4,
      4, 4, 4, 6, 7, 1, 5, 6, 7, 2, 3, 4, 7, 5, 6, 5,\\
    & 2, 7, 8, 6, 5, 8, 4, 8, 3, 8, 6, 4, 7, 7, 4, 5,
      2, 3, 7, 7, 4, 5, 2, 3, 7, 4, 6, 8, 6, 4, 6, 2,\\
    & 4, 4, 7, 7, 6, 6, 6, 8, 7, 4, 4, 8, 9, 4, 4, 3,
      6, 7, 7, 5, 5, 8, 5, 5, 5, 6, 9, 1, 7, 3, 3, 5,\\
    & 7, 7, 6, 8, 8, 8, 8, 7, 5, 8, 7, 8, 5, 5, 8, 8,
      7, 4, 6, 5, 9, 8, 6, 8, 9, 9, 8, 8, 9, 5, 8, 6,\\
    & 3, 5, 9, 8, 8, 7, 6, 8, 5, 9, 7, 6, 5, 8, 5, 8,
      4, 8, 8, 7, 7, 5, 4, 2, 4, 5, 9, 8, 8, 5, 7, 7,\\
    & 2, 6, 2, 7, 6, 5, 4, 4, 6, 9, 3, 9, 4, 4, 1, 7,
      4, 4, 5, 9, 4, 7, 7, 8, 4, 6, 7, 8, 7, 4, 3, 5,\\
    & 7, 7, 4, 4, 6, 4, 4, 2, 9, 9, 8, 6, 8, 8, 4, 5,
      7, 5, 4, 6, 8, 7, 6, 6, 8, 6, 9, 6, 7, 6, 6, 6
  ).
\end{align*}
This data originally came from an experiment described in \cite{Beckett1994}. In
the original experiment, there were not 320 thumbtacks. Rather, there were 16
thumbtacks, 2 flickers, and 10 surfaces. We follow \cite{Liu1996}, however, in
treating the data as if it came from 320 distinct thumbtacks.

To model this example we take $L = 2$, so that $S = \{0, 1\}$, where $0$
represents failure (point down) and $1$ represents success (point up). To match
the modeling in \cite{Liu1996}, we take $\ep = 2$ and $p_0 = p_1 = 1/2$, so that
$\cD(\ep \vrho) \circ \pi_1^{-1} = \Bet(1, 1)$, where $\pi_1: M_1 \to [0, 1]$ is
the projection, $\nu \mapsto \nu(\{1\})$. We will use and compare two different
values of $\ka$ (which is denoted by $c$ in \cite{Liu1996}). For the data, we
have $M = 320$ and $N_m = 9$ for all $m$. Our row counts, $\ol y$, are given by
$\ol y_{m1} = r_m$ and $\ol y_{m0} = 9 - r_m$.

We first consider $\ka = 1$. As in \cite{Liu1996}, we generated $K = 10000$
weighted simulations. In this case, our effective sample size was approximately
244. (For comparison, in \cite{Liu1996}, Liu reported an effective sample size
of 227 for the case $\ka = 1$.) The unknown probability of success for a new
thumbtack is given by $\th_{321, 1}$, and \eqref{newAgent} gives
\[
  \cL(\th_{321, 1} \mid Y = y) \approx \frac1{321} \bigg(\Bet(1, 1)
    + \sum_{m = 1}^{320} \frac{
      \sum_{k = 1}^{10000} V_k \de(\th_{m, 1}^{*, k})
    }{
      \sum_{k = 1}^{10000} V_k
    }\bigg).
\]
An approximate density for this measure is given in Figure \ref{F:tacks1}(a).

We next consider $\ka = 10$, again using $K = 10000$, which generated an
effective sample size of about 388 (compared to 300 in \cite{Liu1996} for the
same value of $\ka$). This time, using \eqref{newAgent} gives
\[
  \cL(\th_{321, 1} \mid Y = y) \approx \frac1{330} \bigg(10\Bet(1, 1)
    + \sum_{m = 1}^{320} \frac{
      \sum_{k = 1}^{10000} V_k \de(\th_{m, 1}^{*, k})
    }{
      \sum_{k = 1}^{10000} V_k
    }\bigg).
\]
Note that in this second case, the simulated values $\th_{m, 1}^{*, k}$ and
their corresponding weights $V_k$ were all regenerated. An approximate density
for this measure is given in Figure \ref{F:tacks1}(b).

\begin{figure}
  \centering
  \begin{subfigure}[b]{0.45\linewidth}
    \includegraphics[width=\linewidth]{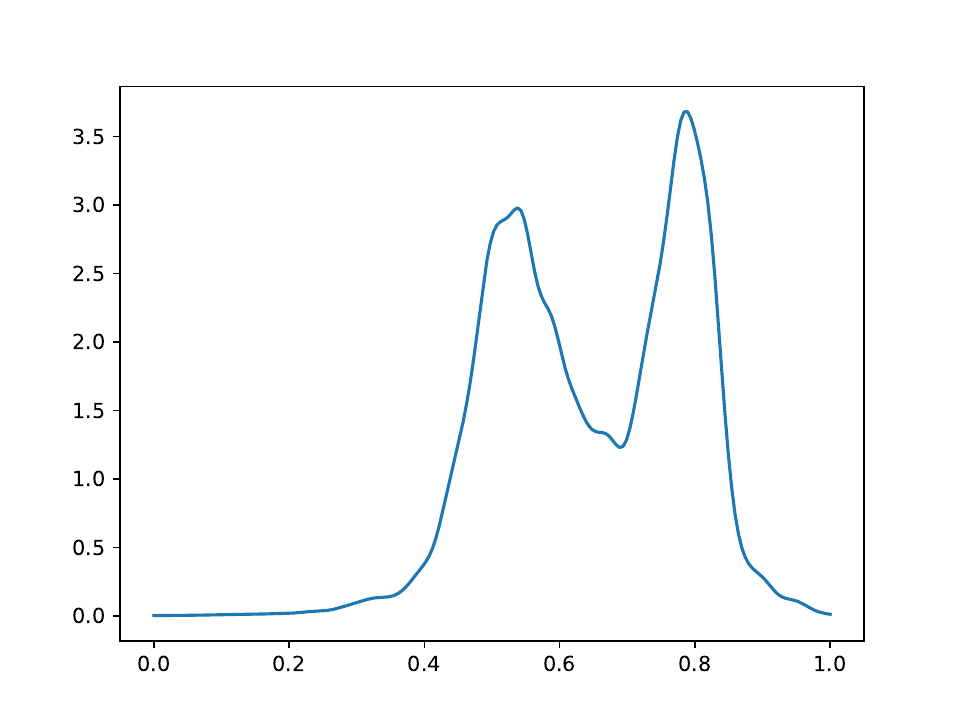}
    \caption{$\ka = 1$}
  \end{subfigure}
  \begin{subfigure}[b]{0.45\linewidth}
    \includegraphics[width=\linewidth]{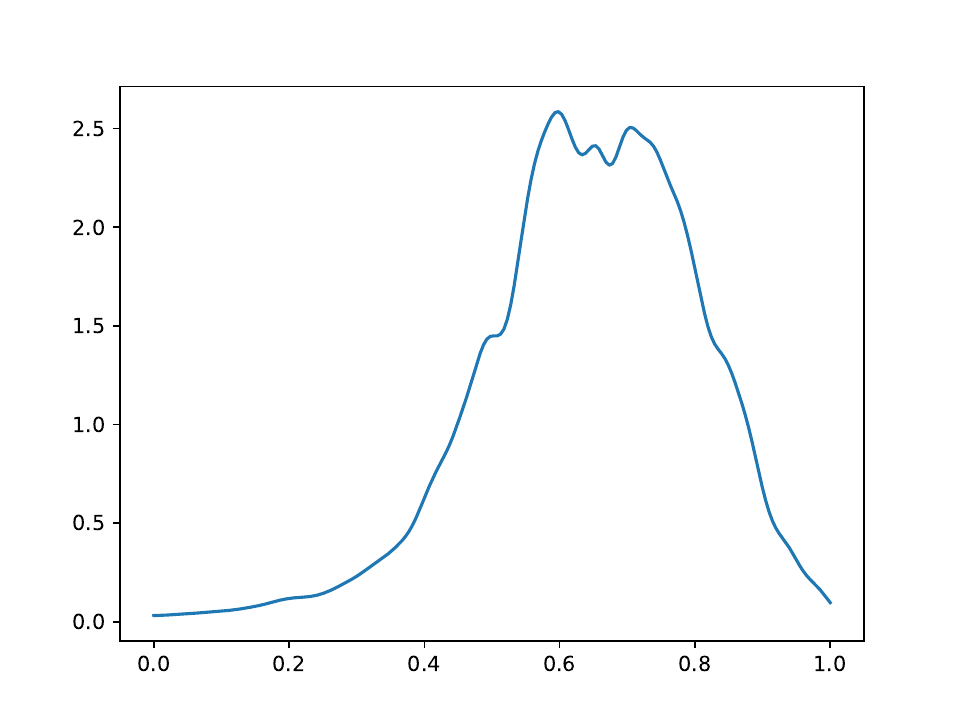}
    \caption{$\ka = 10$}
  \end{subfigure}
  \caption{Approximate density of $\cL(\th_{321, 1} \mid Y = y)$}
  \label{F:tacks1}
\end{figure}

As in the previous example, these approximate densities were constructed using
Gaussian kernel density estimation. Their respective bandwidths are $h \approx
0.105$ and $h \approx 0.096$. The graphs in Figure \ref{F:tacks1} are
qualitatively similar to their counterparts in \cite{Liu1996}, but with minor
differences. It is difficult, though, to make a direct comparison. Although
Gaussian kernel smoothing was also used in \cite{Liu1996}, details about the
smoothing were not provided. For instance, the bandwidths used to produce the
graphs in \cite{Liu1996} were not reported therein.

\subsection{Amazon reviews}\label{S:reviews}

The model in \cite{Liu1996} only covers agents with two possible actions, such
as coins and thumbtacks. The NDP, though, can handle agents whose range of
possible actions is arbitrary.

Imagine, then, that we discover a seller on Amazon that has 50 products. Their
products have an average rating of 2.4 stars out of 5. Some products have almost
100 ratings, while others have only a few. On average, the products have 23
ratings each. In this case, the agents are the products and the actions are the
ratings that each product earns. Each individual rating must be a whole number
of stars between 1 and 5, inclusive. Hence, each action has 5 possible outcomes.
The data used for this hypothetical seller is given in Table \ref{Tb:reviews}.

\begin{small}
  \begin{center}
    \begin{longtable}{|c|c|c|c|c|c|c|c|}
      \hline
      \textbf{product \#} & \textbf{1 star}   & \textbf{2 stars}  &
      \textbf{3 stars}    & \textbf{4 stars}  & \textbf{5 stars}  &
      \textbf{\# reviews} & \textbf{average}  \\ \hline
      \endhead
      1  & 9  & 25 & 15 & 41 & 0  & 90 & 2.98 \\ \hline
      2  & 21 & 28 & 18 & 1  & 3  & 71 & 2.11 \\ \hline
      3  & 16 & 11 & 21 & 11 & 0  & 59 & 2.46 \\ \hline
      4  & 3  & 9  & 37 & 0  & 3  & 52 & 2.83 \\ \hline
      5  & 11 & 0  & 36 & 0  & 5  & 52 & 2.77 \\ \hline
      6  & 16 & 16 & 4  & 15 & 0  & 51 & 2.35 \\ \hline
      7  & 30 & 3  & 15 & 0  & 0  & 48 & 1.69 \\ \hline
      8  & 12 & 9  & 17 & 1  & 7  & 46 & 2.61 \\ \hline
      9  & 13 & 13 & 18 & 1  & 0  & 45 & 2.16 \\ \hline
      10 & 23 & 2  & 0  & 14 & 0  & 39 & 2.13 \\ \hline
      11 & 11 & 4  & 6  & 7  & 10 & 38 & 3.03 \\ \hline
      12 & 6  & 3  & 21 & 0  & 5  & 35 & 2.86 \\ \hline
      13 & 14 & 9  & 0  & 5  & 2  & 30 & 2.07 \\ \hline
      14 & 4  & 25 & 0  & 0  & 0  & 29 & 1.86 \\ \hline
      15 & 8  & 7  & 2  & 10 & 0  & 27 & 2.52 \\ \hline
      16 & 5  & 4  & 6  & 10 & 0  & 25 & 2.84 \\ \hline
      17 & 6  & 10 & 9  & 0  & 0  & 25 & 2.12 \\ \hline
      18 & 11 & 1  & 2  & 3  & 7  & 24 & 2.75 \\ \hline
      19 & 20 & 3  & 0  & 0  & 0  & 23 & 1.13 \\ \hline
      20 & 6  & 9  & 4  & 2  & 1  & 22 & 2.23 \\ \hline
      21 & 5  & 1  & 3  & 8  & 1  & 18 & 2.94 \\ \hline
      22 & 9  & 1  & 5  & 2  & 1  & 18 & 2.17 \\ \hline
      23 & 5  & 7  & 3  & 1  & 1  & 17 & 2.18 \\ \hline
      24 & 0  & 3  & 12 & 0  & 2  & 17 & 3.06 \\ \hline
      25 & 1  & 11 & 1  & 3  & 1  & 17 & 2.53 \\ \hline
      26 & 0  & 3  & 0  & 6  & 7  & 16 & 4.06 \\ \hline
      27 & 2  & 2  & 8  & 3  & 1  & 16 & 2.94 \\ \hline
      28 & 6  & 5  & 1  & 3  & 0  & 15 & 2.07 \\ \hline
      29 & 6  & 6  & 1  & 2  & 0  & 15 & 1.93 \\ \hline
      30 & 0  & 8  & 2  & 4  & 0  & 14 & 2.71 \\ \hline
      31 & 8  & 5  & 1  & 0  & 0  & 14 & 1.5  \\ \hline
      32 & 5  & 0  & 8  & 0  & 1  & 14 & 2.43 \\ \hline
      33 & 0  & 0  & 13 & 0  & 0  & 13 & 3    \\ \hline
      34 & 5  & 4  & 1  & 2  & 0  & 12 & 2    \\ \hline
      35 & 6  & 2  & 0  & 3  & 0  & 11 & 2    \\ \hline
      36 & 4  & 7  & 0  & 0  & 0  & 11 & 1.64 \\ \hline
      37 & 0  & 1  & 6  & 4  & 0  & 11 & 3.27 \\ \hline
      38 & 5  & 5  & 0  & 1  & 0  & 11 & 1.73 \\ \hline
      39 & 5  & 6  & 0  & 0  & 0  & 11 & 1.55 \\ \hline
      40 & 1  & 2  & 2  & 4  & 1  & 10 & 3.2  \\ \hline
      41 & 4  & 1  & 3  & 1  & 0  & 9  & 2.11 \\ \hline
      42 & 4  & 1  & 1  & 0  & 0  & 6  & 1.5  \\ \hline
      43 & 3  & 1  & 0  & 1  & 0  & 5  & 1.8  \\ \hline
      44 & 3  & 0  & 1  & 0  & 0  & 4  & 1.5  \\ \hline
      45 & 1  & 2  & 0  & 0  & 0  & 3  & 1.67 \\ \hline
      46 & 0  & 1  & 2  & 0  & 0  & 3  & 2.67 \\ \hline
      47 & 2  & 1  & 0  & 0  & 0  & 3  & 1.33 \\ \hline
      48 & 0  & 0  & 2  & 0  & 0  & 2  & 3    \\ \hline
      49 & 0  & 1  & 0  & 1  & 0  & 2  & 3    \\ \hline
      50 & 0  & 0  & 1  & 1  & 0  & 2  & 3.5  \\ \hline
      \caption{Reviews for 50 different products from a given seller}
      \label{Tb:reviews}
    \end{longtable}
  \end{center}
\end{small}

To model this example we take $L = 5$, so that $S = \{0, 1, 2, 3, 4\}$, where
$\l \in S$ represents an $(\l + 1)$-star review. We take $\ka = 10$, $\ep = 5$,
and $p_\l = 1/5$ for each $\l \in S$. For the data, we have $M = 50$ and the
number $N_m$ is the total number of reviews given to the $m$th product. For our
row counts, the number $\ol y_{m\l}$ is the total number of $(\l + 1)$-star
reviews given to the $m$th product. In this example, we generated $K = 100000$
weighted simulations, and obtained an effective sample size of about 561. In
computing the simulation weights as in \eqref{scaledWt}, we used a log scale
factor of 28.8.

As with the pressed penny machine, we begin by considering a hypothetical new
product from this seller. The quality of this 51st product can be characterized
by the vector $\th_{51} = (\th_{51, 0}, \th_{51, 1}, \th_{51, 2}, \th_{51, 3},
\th_{51, 4})$, since $\th_{51, \l}$ is the (unknown) probability that the
product will receive an $(\l + 1)$-star review. The long-term average rating of
this product over many reviews will be $A(\th_{51})$, where $A(x) = \sum_{\l =
0}^4 \l x_\l$. According to \eqref{newAgent}, we have
\[
  \cL(A(\th_{51}) \mid Y = y) \approx \frac 1{60} \left({
      10 \Dir(1, 1, 1, 1, 1) \circ A^{-1}
      + \sum_{m = 1}^{50} \frac{
        \sum_{k = 1}^{100000} V_k \de(A(\th_m^{*, k}))
      }{
        \sum_{k = 1}^{100000} V_k
      }
    }\right).
\]
Using this, we have $E[A(\th_{51}) \mid Y = y] \approx 2.54$, meaning that the
expected long-term average rating of a new product is a little more than 2.5.
For a more informative look at the quality of a new product, an approximate
density for $\cL(A(\th_{51}) \mid Y = y)$ is given in Figure \ref{F:reviews}(a).

\begin{figure}
  \centering
  \begin{subfigure}[b]{0.45\linewidth}
    \includegraphics[width=\linewidth]{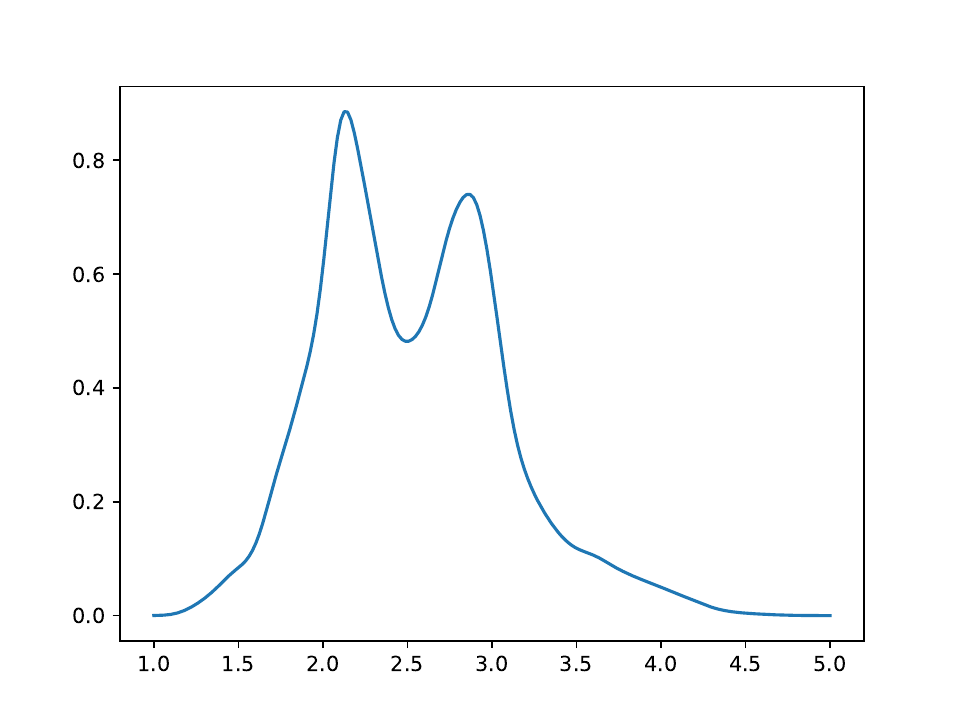}
    \caption{$m = 51$, mean: 2.54}
  \end{subfigure}
  \begin{subfigure}[b]{0.45\linewidth}
    \includegraphics[width=\linewidth]{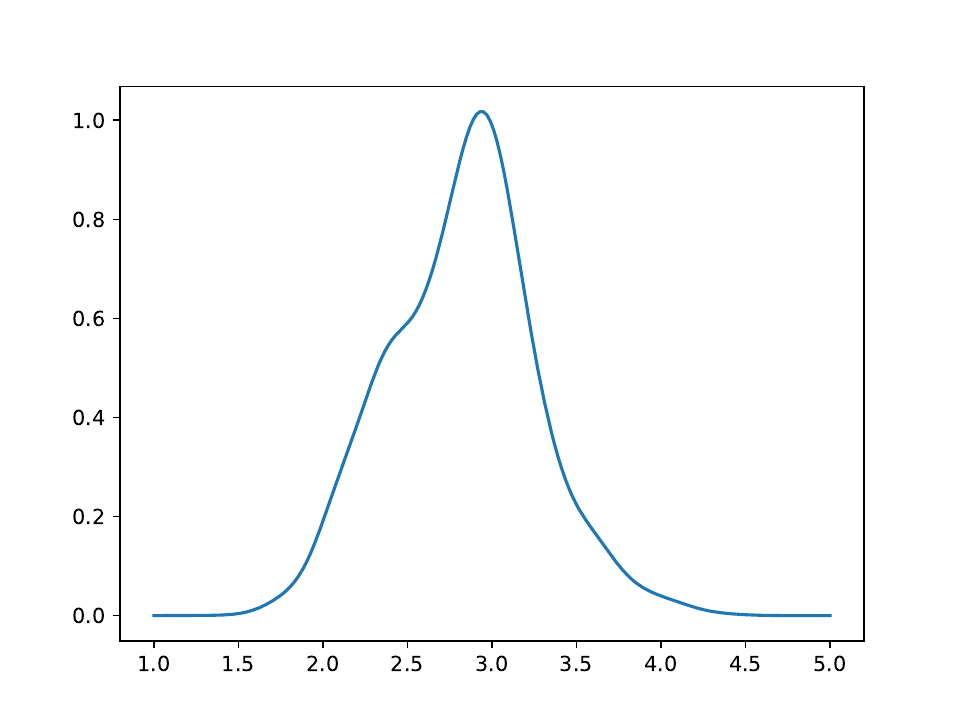}
    \caption{$m = 50$, mean: 2.83}
  \end{subfigure}
  \begin{subfigure}[b]{0.45\linewidth}
    \includegraphics[width=\linewidth]{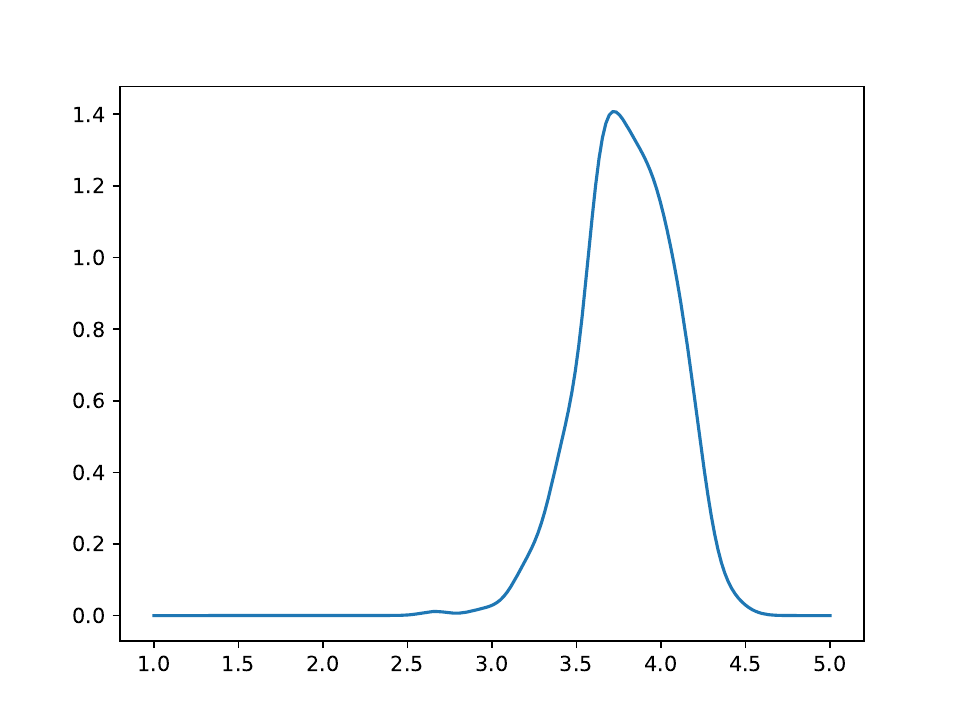}
    \caption{$m = 26$, mean: 3.8}
  \end{subfigure}
  \caption{Approximate densities for $\cL(A(\th_{m}) \mid Y = y)$}
  \label{F:reviews}
\end{figure}

This graph in Figure \ref{F:reviews}(a) shows a bimodal distribution, meaning
that we can expect the average ratings of future products to cluster around 2
and 3 stars.

After having considered a hypothetical new product, we turn our attention to the
50 products that have already received reviews. Consider, for instance, the 50th
product. This product has a 3.5-star average rating, but only 2 reviews. To see
the effect of these 2 reviews on the expected long-term rating, we apply 
\eqref{agent} with $\Phi((x_{m\l})) = A(x_{50})$ to obtain
\[
  \cL(A(\th_{50}) \mid Y = y) \approx \frac{
    \sum_{k = 1}^{100000} V_k \de(A(\th_{50}^{*, k}))
  }{
    \sum_{k = 1}^{100000} V_k
  }.
\]
This gives $E[A(\th_{50}) \mid Y = y] \approx 2.83$, and an approximate density
for $\cL(A(\th_{50}) \mid Y = y)$ is given in Figure \ref{F:reviews}(b).
According to the model, the 50th product's two reviews (a 3-star and a 4-star
review) have transformed the graph in Figure \ref{F:reviews}(a) to the graph in 
\ref{F:reviews}(b), and increased its expected long-term average rating from
2.54 to 2.83.

We can similarly look at the 26th product. This product has an average rating of
4.06, but it only has 16 reviews. Using \eqref{agent} as above, we obtain $E[A
(\th_{26}) \mid Y = y] \approx 3.8$, and an approximate density for $\cL(A(\th_
{26}) \mid Y = y)$ as given in Figure \ref{F:reviews}(c).

\subsection{The gamer distribution}\label{S:gamer}

In our previous examples, we took $\bmp$ to be a uniform measure. That is, we
took $p_\l = 1/L$ for all $\l$. Our final example will be presented in Section
\ref{S:lboards} and is concerned with video game leaderboards. In that example,
to have plausible results that match our intuition about video games, it will
not be sufficient to let $\bmp$ be uniform. Instead, we will construct $\bmp$
from the continuous distribution described in this section.

Let $r$, $c$, and $\al$ be positive real numbers. A nonnegative random variable
$X$ is said to have the \emph{gamer distribution} with parameters $r$, $c$, and
$\al$, denoted by $X \sim \Gamer(r, c, \al)$, if $X$ has density
\begin{equation}\label{gamrDens}
  f(x) = \frac {r c^r}{\al^r \, \Ga(\al)} \,
    x^{-r - 1} \, \int_0^{\al x/c} y^{\al + r - 1} e^{-y} \, dy,
\end{equation}
for $x > 0$. The fact that this is a probability density function is a consequence of 
Proposition \ref{P:makeGamr} below. Note that if $X \sim \Gamer(r, c, \al)$ and
$s > 0$, then $sX \sim \Gamer(r, sc, \al)$.

The gamer distribution is meant to model the score of a random player in a
particular single-player game. The game is assumed to have a structure in which
the player engages in a sequence of activities that can result in success or
failure. Successes increase the player's score. Failures bring the player
closer to a termination event, which causes the game to end.

The distribution of scores at the higher end of the player skill spectrum has a
power law decay with exponent $r$. More specifically, there is constant $K$ such
that $P(X > x) \approx K x^ {-r}$ for large values of $x$. For small values of
$x$, the distribution of $X$ looks like a gamma distribution.

The parameter $c$ indicates the average score of players at the lower end of the
skill spectrum, which make up the bulk of the player base. The parameter $\al$
is connected to the structure of the game. Higher values of $\al$ indicate a
more forgiving game in which the termination event is harder to trigger. See
below for more on the meaning of these parameters.

The gamer distribution can be seen as a mixture of gamma distributions, where
the mixing distribution is Pareto. More specifically, it is straightforward to
prove the following.

\begin{prop}\label{P:makeGamr}
  Let $r, c > 0$ and let $M$ have a Pareto distribution with minimum value $c$
  and tail index $r$. That is, $P(M > m) = (m/c)^{-r}$ for $m > c$. If $X \mid M
  \sim \Gam(\al, \al/M)$, then $X \sim \Gamer(r, c, \al)$.
\end{prop}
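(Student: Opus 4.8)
The plan is to obtain the density of $X$ by integrating out the mixing variable $M$ and then matching the result to \eqref{gamrDens}. Since $P(M > m) = (m/c)^{-r}$ for $m > c$, differentiating the tail gives the Pareto density $f_M(m) = r c^r m^{-r - 1}$ on $(c, \infty)$. Using the rate (not scale) parametrization, so that $E[X \mid M] = M$ in keeping with the Pareto-mixing interpretation, the conditional law $X \mid M \sim \Gam(\al, \al/M)$ has density $f_{X \mid M}(x \mid m) = \Ga(\al)^{-1} (\al/m)^\al x^{\al - 1} e^{-\al x/m}$ for $x > 0$. First I would record these two densities and assemble the marginal integral
\[
  f_X(x) = \int_c^\infty f_{X \mid M}(x \mid m) \, f_M(m) \, dm
    = \frac{r c^r \al^\al x^{\al - 1}}{\Ga(\al)} \int_c^\infty
      m^{-\al - r - 1} e^{-\al x/m} \, dm,
\]
valid for $x > 0$.

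The computational heart of the argument is the single integral in $m$, which I would evaluate with the substitution $y = \al x/m$, i.e.\@ $m = \al x/y$ and $dm = -\al x \, y^{-2} \, dy$. This sends $m = c$ to $y = \al x/c$ and $m = \infty$ to $y = 0$, and after collecting the powers of $y$ it turns the integral into $(\al x)^{-\al - r} \int_0^{\al x/c} y^{\al + r - 1} e^{-y} \, dy$. Substituting this back and cancelling powers of $\al$ and $x$, via $\al^\al \al^{-\al - r} = \al^{-r}$ and $x^{\al - 1} x^{-\al - r} = x^{-r - 1}$, produces exactly the right-hand side of \eqref{gamrDens}, which completes the proof.

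The only care required is bookkeeping: fixing the rate-versus-scale convention for $\Gam$ at the outset, and tracking the exponents cleanly through the substitution. There is no genuine analytic obstacle, and in particular the resulting lower-incomplete-gamma integral need not be put in closed form. A pleasant byproduct is that, because $f_X$ arises here as a bona fide marginal density, this same computation simultaneously verifies the claim made just before the proposition that \eqref{gamrDens} is indeed a probability density function.
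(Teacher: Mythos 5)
Your proof is correct: the Pareto density, the rate parametrization of $\Gam(\al, \al/M)$, and the substitution $y = \al x/m$ all check out, and the exponent bookkeeping lands exactly on \eqref{gamrDens}. The paper omits the proof as ``straightforward,'' and this direct marginalization is clearly the intended argument; your closing observation even matches the paper's own remark that the normalization of \eqref{gamrDens} is a consequence of the proposition.
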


According to Proposition \ref{P:makeGamr}, the parameter $r$ is the tail index
of the mean player scores in the population. However, it is also the tail index
of the raw player scores. To see this, let $\ga(\be, u) = \int_0^u y^ {\be - 1}
e^{-y} \, dy$ denote the lower incomplete gamma function. Then \eqref{gamrDens}
can be rewritten as
\begin{equation}\label{gmrDens2}
  f(x) = \frac {r c^r}{\al^r \, \Ga(\al)} \,
    x^{-r - 1} \, \ga\left(\al + r, \frac {\al x} c\right).
\end{equation}
Since $\ga(\be, u) \to \Ga(\be)$ as $u \to \infty$, we have
\begin{equation*}
  f(x) \sim \frac {\Ga(\al + r)}{\al^r \, \Ga(\al)} \, \frac{r c^r}{x^{r + 1}}
\end{equation*}
as $x \to \infty$. In other words, the density of $X$ is asymptotically
proportional to the density of $M$ as $x \to \infty$.

For small values of $x$, note that $\ga(\be, u) \sim u^\be e^{-u}$ as $u \to 0$.
Hence, if we introduce the parameter $\la = \al/c$, then
\begin{equation*}
  f(x) \sim \frac r{\la^r \Ga(\al)} x^{-r - 1}(\la x)^{\al + r} e^{-\la x}
  = r \, \frac{\la^\al}{\Ga(\al)} \, x^{\al - 1} e^{-\la x}
\end{equation*}
as $x \to 0$. In other words, the density of $X$ is asymptotically proportional
to the density of $\Gam(\al, \al/c)$ as $x \to 0$. Since $\Gam (\al, \al/c)$ has
mean $c$, the parameter $c$ can be understood as the average score of players at
the lower end of the skill spectrum.

To understand $\al$, we look to the fact that $X \mid M \sim \Gam(\al, \al/M)$.
Given $M$, we can think of $X$ as being driven by $\al$ exponential clocks, each
with mean $M/\al$. Each clock represents a time to failure, and when all clocks
expire, the player has reached the termination event. Since $\al$ denotes the
number of such clocks, a higher value of $\al$ indicates that more failures are
needed to trigger the end of the game. We also have $\Var(X \mid M) = M^2/\al$.
Hence, $\al$ can also be understood through the fact that $1/\sqrt{\al}$ is the
coefficient of variation of $X$ given $M$.

For computational purposes, it may be more efficient to rewrite \eqref{gmrDens2}
in terms of the logarithm of the gamma function and the regularized lower
incomplete gamma function, $P(\be, u) = \ga(\be, u)/\Ga (\be)$. In this case, we
have
\begin{equation*}
  f(x) = r \left( \frac{c}{\al} \right)^r
    \exp(\log \Ga(\al + r) - \log \Ga(\al))
    x^{-r - 1} P\left( \al + r, \frac{\al x}{c} \right)
\end{equation*}
for $x > 0$.

\subsection{Video game leaderboards}\label{S:lboards}

For our final example, we consider a single-player video game in which an
individual player tries to score as many points as possible before the game
ends. If $X$ is a random score of a random player, then we will assume that $X
\sim \Gamer(r, c, \al)$, where $r = 7/3$, $c = 28$, and $\al = 3$. The values of
these parameters are arbitrarily chosen for the sake of the example. The values
of $r$ and $c$ give the distribution a mean of about 50 and a decay rate that
approximately matches the decay rate in the global Tetris leaderboard (see
\url{https://kirjava.xyz/tetris-leaderboard/}). The choice $\al = 3$ indicates a
game in which the player has 3 ``lives,'' which is a typical gaming structure,
especially in classic arcade video games. Finally, we assume that the actual
score displayed by the game is rounded to the nearest integer and capped at 499.

A group of 10 friends get together and play this game. Each friend plays the
game a different number of times. In this case, the agents are the players and
the actions are the scores they earn each time they play.

The 10 friends all have their own usernames that they use when playing the game.
The usernames are Asparagus Soda, Goat Radish, Potato Log, Pumpkins, Running
Stardust, Sweet Rolls, The Matrix, The Pianist Spider, The Thing, and Vertigo
Gal. We will consider three different scenarios for this example.

\subsubsection{Players with matching scores}

In our first scenario, the 10 friends generate the scores given in Table
\ref{Tb:scores1}. Note that in that table, the scores are listed in increasing
order. To get an overview of the data, we can place the 10 players in a
leaderboard, ranked by their high score, as shown in Table \ref{Tb:lboard1}.

\begin{table}
  \centering
  \begin{tabular}{|l|l|}
    \hline
    \multicolumn{1}{|c|}{\textbf{Username}} &
      \multicolumn{1}{|c|}{\textbf{Scores}} \\ \hline
    Pumpkins &            12, 21, 25, 25, 26, 27, 30, 33, 34, 34, 36,
                          42, 44, 44, 48, 55, 67, 69 \\ \hline
    Potato Log &          18, 21, 21, 22, 23, 25, 29, 29, 32, 33, 47,
                          53, 54, 56, 57, 65, 75 \\ \hline
    The Thing &           10, 16, 16, 19, 19, 25, 25, 26, 29, 32, 35,
                          37, 42, 44, 59, 60 \\ \hline
    Running Stardust &    23, 38, 62, 71, 138, 149, 151 \\ \hline
    Sweet Rolls &         15, 23, 56, 71, 98, 130 \\ \hline
    Vertigo Gal &         10, 30, 40, 56, 87, 92 \\ \hline
    Asparagus Soda &      17, 43, 55 \\ \hline
    The Matrix &          11, 15 \\ \hline
    Goat Radish &         38 \\ \hline
    The Pianist Spider &  3 \\ \hline
  \end{tabular}
  \caption{Player scores for Video Game Scenario 1}
  \label{Tb:scores1}
\end{table}

\begin{table}
  \centering
  \begin{tabular}{|c|l|c|c|c|c|}
    \hline
    \textbf{rank}       & \textbf{name}     & \textbf{hi score} &
    \textbf{avg score}  & \textbf{NDP avg}  & \textbf{\# games} \\ \hline
    1  & Running Stardust   & 151 & 90 & 80 & 7  \\ \hline
    2  & Sweet Rolls        & 130 & 66 & 55 & 6  \\ \hline
    3  & Vertigo Gal        & 92  & 52 & 52 & 6  \\ \hline
    4  & Potato Log         & 75  & 39 & 39 & 17 \\ \hline
    5  & Pumpkins           & 69  & 37 & 38 & 18 \\ \hline
    6  & The Thing          & 60  & 31 & 32 & 16 \\ \hline
    7  & Asparagus Soda     & 55  & 38 & 40 & 3  \\ \hline
    8  & Goat Radish        & 38  & 38 & 71 & 1  \\ \hline
    9  & The Pianist Spider & 32  & 32 & 37 & 1  \\ \hline
    10 & The Matrix         & 15  & 13 & 43 & 2  \\ \hline
  \end{tabular}
  \caption{Leaderboard for Video Game Scenario 1}
  \label{Tb:lboard1}
\end{table}

To model this scenario, we take $L = 500$, so that $S = \{0, 1, \ldots, 499\}$.
We take $\ka = \ep = 1$ and let
\[
  p_\l = \begin{cases}
    F(\l + 0.5) - F(\l - 0.5) & \text{if $0 \le \l < 499$},\\
    1 - F(498.5) &\text{if $\l = 499$},
  \end{cases}
\]
where $F$ is the distribution function of a $\Gamer(7/3, 28, 3)$ distribution.
For the data, we have $M = 10$, the number $N_m$ is the number of scores in the
$m$th row of Table \ref{Tb:scores1}, and $y_{mn}$ is the $n$th score in the
$m$th row. Note that since the model only depends on $y_{mn}$ through the row
counts $\ol y_{m\l}$, the order in which the scores are listed in the vector
$y_m$ is not relevant. In this scenario, we generated $K = 40000$ weighted
simulations, and obtained an effective sample size of about 326. In computing
the simulation weights as in \eqref{scaledWt}, we used a log scale factor of 42.

The long-term average score of the player in the $m$th row of Table 
\ref{Tb:scores1} will be $A(\th_m)$, where $A(x) = \sum_{\l = 0}^{499} \l x_\l$.
For example, using \eqref{agent}, the expected long-term average score of
Running Stardust is $E[A(\th_4) \mid Y = y] \approx 79.65$. These conditional
expectations, rounded to the nearest integer, are shown in the ``NDP avg''
column of Table \ref{Tb:lboard1}.

Looking at these averages, we can see at least two players whose numbers seem
unusual. The first is Goat Radish. They played only one game and scored a 38,
which is a relatively low score compared to the rest of the group. And yet the
NDP model has given them an expected long-term average score of 71. Not only is
this counterintuitive, it is also inconsistent with how the model treated The
Pianist Spider.

The reason for this behavior can be seen in Table \ref{Tb:scores1}. There is
only one other player that managed to score exactly 38 in one of their games:
Running Stardust. So from the model's perspective, there is a reasonable chance
that Goat Radish and Running Stardust have similar scoring tendencies. Since
Running Stardust happens to be the top player, this leads to an unusually high
long-term estimate for Goat Radish.

Our intuition is able to dismiss this line of reasoning because we know, for
instance, that there is very little difference between a score of 38 and 39. Had
Goat Radish scored a 39 instead, our predictions should not change that much.
But we only know this because we are viewing the positive real numbers as more
than just a set. We are viewing them as a totally ordered set with the Euclidean
metric. The NDP model is not designed to utilize these properties of the state
space. From its perspective, the number ``38'' is just a label. It is nothing
more than the name of a particular element of the state space, and it happens to
be an element that only two players were able to hit.

We see similar behavior in the model's forecast for The Matrix, who scored an 11
and a 15 in their two games. No one else scored an 11, but exactly one other
player managed to score exactly 15, and that was Sweet Rolls, who happens to be
the second best player. Just as with Goat Radish, this causes the model to
generate an unintuitively high value for The Matrix's long-term average score.

To test this explanation, we are led to our second scenario.

\subsubsection{Matching scores removed}

The scores in our second scenario are the same as in our first, but we changed
Goat Radish's 38 to a 39, and The Matrix's 15 to a 14. (See Table
\ref{Tb:scores2}.) The scores 14 and 39 are unique in that no other player
achieved exactly those scores. We reran the model, again generating $K = 40000$
weighted simulations. This time, we obtained an effective sample size of about
22.3. To save time, we deleted the two heaviest simulations, leaving $K = 39998$
simulations with an effective sample size of about 1099. The new expected
long-term averages are shown in Table \ref{Tb:lboard2}.

We now see that Goat Radish and The Matrix have lower, more reasonable long-term
averages according to the model. Likewise, Running Stardust and Sweet Rolls have
slightly higher averages. In the first scenario, their averages were brought
down because of their associations with Goat Radish and The Matrix.

\begin{table}
  \centering
  \begin{tabular}{|l|l|}
    \hline
    \multicolumn{1}{|c|}{\textbf{Username}} &
      \multicolumn{1}{|c|}{\textbf{Scores}} \\ \hline
    Pumpkins &            12, 21, 25, 25, 26, 27, 30, 33, 34, 34, 36,
                          42, 44, 44, 48, 55, 67, 69 \\ \hline
    Potato Log &          18, 21, 21, 22, 23, 25, 29, 29, 32, 33, 47,
                          53, 54, 56, 57, 65, 75 \\ \hline
    The Thing &           10, 16, 16, 19, 19, 25, 25, 26, 29, 32, 35,
                          37, 42, 44, 59, 60 \\ \hline
    Running Stardust &    23, 38, 62, 71, 138, 149, 151 \\ \hline
    Sweet Rolls &         15, 23, 56, 71, 98, 130 \\ \hline
    Vertigo Gal &         10, 30, 40, 56, 87, 92 \\ \hline
    Asparagus Soda &      17, 43, 55 \\ \hline
    The Matrix &          11, \textbf{14} \\ \hline
    Goat Radish &         \textbf{39} \\ \hline
    The Pianist Spider &  3 \\ \hline
  \end{tabular}
  \caption{Player scores for Video Game Scenario 2}
  \label{Tb:scores2}
\end{table}

\begin{table}
  \centering
  \begin{tabular}{|c|l|c|c|c|c|}
    \hline
    \textbf{rank}       & \textbf{name}     & \textbf{hi score} &
    \textbf{avg score}  & \textbf{NDP avg}  & \textbf{\# games} \\ \hline
    1  & Running Stardust   & 151 & 90 & 84 & 7  \\ \hline
    2  & Sweet Rolls        & 130 & 66 & 62 & 6  \\ \hline
    3  & Vertigo Gal        & 92  & 52 & 51 & 6  \\ \hline
    4  & Potato Log         & 75  & 39 & 39 & 17 \\ \hline
    5  & Pumpkins           & 69  & 37 & 38 & 18 \\ \hline
    6  & The Thing          & 60  & 31 & 31 & 16 \\ \hline
    7  & Asparagus Soda     & 55  & 38 & 39 & 3  \\ \hline
    8  & Goat Radish        & 38  & 38 & 43 & 1  \\ \hline
    9  & The Pianist Spider & 32  & 32 & 37 & 1  \\ \hline
    10 & The Matrix         & 15  & 13 & 28 & 2  \\ \hline
  \end{tabular}
  \caption{Leaderboard for Video Game Scenario 2}
  \label{Tb:lboard2}
\end{table}

\subsubsection{Players with only a few games}

In our third scenario, the ten friends generated the scores in Table
\ref{Tb:scores3}. We use the same $L$, $\ka$, $\ep$, $\bmp$, and $M$ as in the
first scenario. Also as it was there, the number $N_m$ is the number of scores
in the $m$th row of Table \ref{Tb:scores3}, and $y_{mn}$ is the $n$th score in
the $m$th row. Note, however, that the username in the $m$th row has changed in
the current scenario. We again used a log scale factor of 42 and generated $K =
40000$ weighted simulations, obtaining an effective sample size of about 39.
This time, we deleted the 26 heaviest simulations, leaving $K = 39974$
simulations and an effective sample size of about 207. As before, the resulting
long-term expected averages, $E[A(\th_m) \mid Y = y]$, are shown in Table
\ref{Tb:lboard3}.

In this example, we focus our attention on Asparagus Soda, who it situated at
No.~4 on the leaderboard, but played the game only once. The question is, does
he deserve to be at No.~4? Is he truly the fourth-best player among the ten
friends? For example, Potato Log, who is at No.~3, played the game 20 times and
only managed to get a high score of 87. Asparagus Soda almost matched that high
score in a single attempt. Intuitively, it seems clear that Asparagus Soda is
the better player and should rank higher than Potato Log.

It is less clear how Asparagus Soda compares to Pumpkins, the No.~2 player.
Neither of them made a lot of attempts, but Asparagus Soda has the higher
average score. Which one is more likely to have the higher long-term average
score? If they had a contest where they each played a single game and the higher
score wins, who should we bet on?

\begin{table}
  \centering
  \begin{tabular}{|l|p{5in}|}
    \hline
    \multicolumn{1}{|c|}{\textbf{Username}} &
      \multicolumn{1}{|c|}{\textbf{Scores}} \\ \hline
    Vertigo Gal &         45, 100, 118, 121, 125, 130, 133, 145, 161,
                          173, 173, 187, 190, 192, 193, 200, 220, 223,
                          256, 275, 314, 354, 388, 475, 524 \\ \hline
    Potato Log &          4, 13, 13, 16, 19, 19, 19, 19, 23, 24, 25,
                          26, 31, 38, 41, 43, 44, 47, 51, 87 \\ \hline
    The Thing &           4, 6, 9, 19, 25, 27, 28, 38, 39, 40 \\ \hline
    The Matrix &          13, 15, 17, 32, 32, 61, 78 \\ \hline
    Running Stardust &    21, 23, 51, 61, 65 \\ \hline
    Goat Radish &         23, 25, 34, 51 \\ \hline
    Pumpkins &            49, 65, 84, 117 \\ \hline
    Sweet Rolls &         26, 65 \\ \hline
    Asparagus Soda &      86 \\ \hline
    The Pianist Spider &  62 \\ \hline
  \end{tabular}
  \caption{Player scores for Video Game Scenario 3}
  \label{Tb:scores3}
\end{table}

\begin{table}
  \centering
  \begin{tabular}{|c|l|c|c|c|c|}
    \hline
    \textbf{rank}       & \textbf{name}     & \textbf{hi score} &
    \textbf{avg score}  & \textbf{NDP avg}  & \textbf{\# games} \\ \hline
    1  & Vertigo Gal        & 475 & 207 & 198 & 25 \\ \hline
    2  & Pumpkins           & 117 & 79  & 72  & 4  \\ \hline
    3  & Potato Log         & 87  & 30  & 31  & 20 \\ \hline
    4  & Asparagus Soda     & 86  & 86  & 67  & 1  \\ \hline
    5  & The Matrix         & 78  & 35  & 37  & 7  \\ \hline
    6  & Running Stardust   & 65  & 44  & 45  & 5  \\ \hline
    6  & Sweet Rolls        & 65  & 46  & 52  & 2  \\ \hline
    8  & The Pianist Spider & 62  & 62  & 56  & 1  \\ \hline
    9  & Goat Radish        & 51  & 33  & 34  & 4  \\ \hline
    10 & The Thing          & 40  & 24  & 26  & 10 \\ \hline
  \end{tabular}
  \caption{Leaderboard for Video Game Scenario 3}
  \label{Tb:lboard3}
\end{table}

\paragraph{Asparagus Soda vs.~Potato Log.}

Looking at Table \ref{Tb:scores3}, we see that Asparagus Soda corresponds to $m
= 9$ and Potato Log corresponds to $m = 2$. Table \ref{Tb:lboard3} shows us that
$E[A(\th_9) \mid Y = y] \approx 67$ and $E[A(\th_2) \mid Y = y] \approx 31$. In
other words, the NDP model gives Asparagus Soda a much higher expected long-term
average score than Potato Log. This confirms our intuition that Asparagus Soda
is the better player. But because Asparagus Soda played only one game, the model
should have a lot more uncertainty surrounding Asparagus Soda's forecasted mean.
To see this, we can compare approximate densities for $\cL(A(\th_9) \mid Y = y)$
and $\cL(A (\th_2) \mid Y = y)$. (See Figure \ref{F:aspvspot}.)

\begin{figure}
  \centering
  \begin{subfigure}[b]{0.45\linewidth}
    \includegraphics[width=\linewidth]{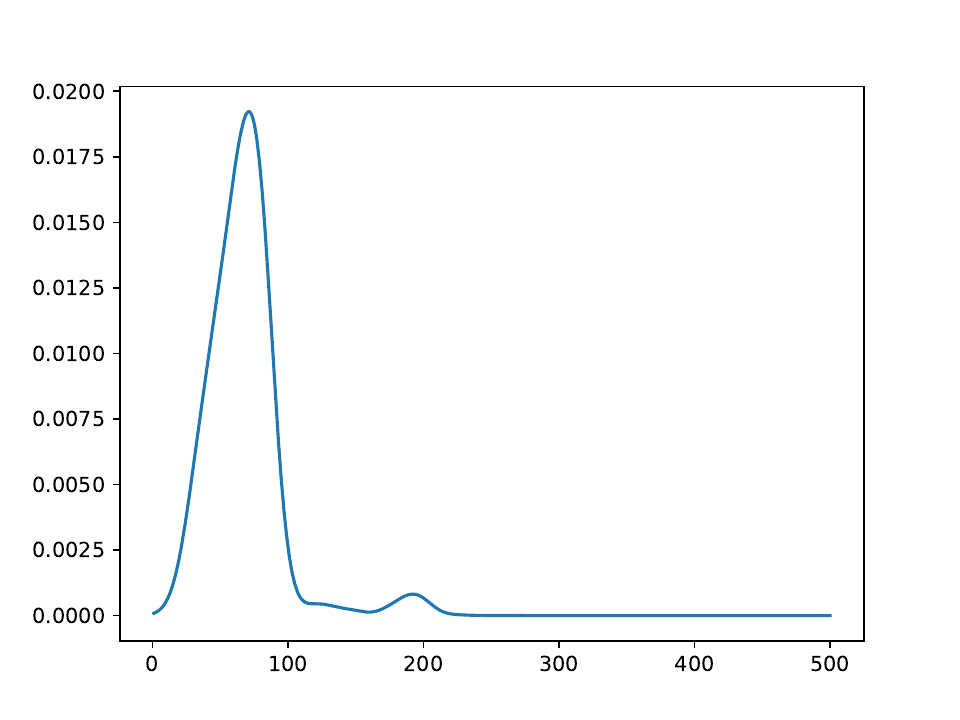}
    \caption{density for $\cL(A(\th_9) \mid Y = y)$}
  \end{subfigure}
  \begin{subfigure}[b]{0.45\linewidth}
    \includegraphics[width=\linewidth]{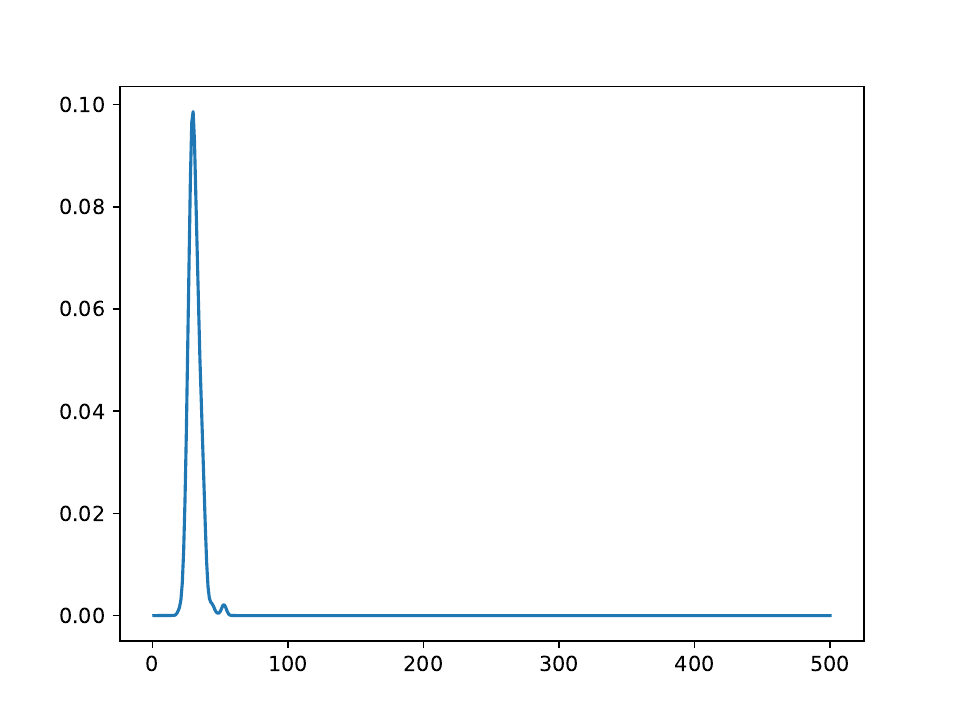}
    \caption{density for $\cL(A(\th_2) \mid Y = y)$}
  \end{subfigure}
  \begin{subfigure}[b]{0.45\linewidth}
    \includegraphics[width=\linewidth]{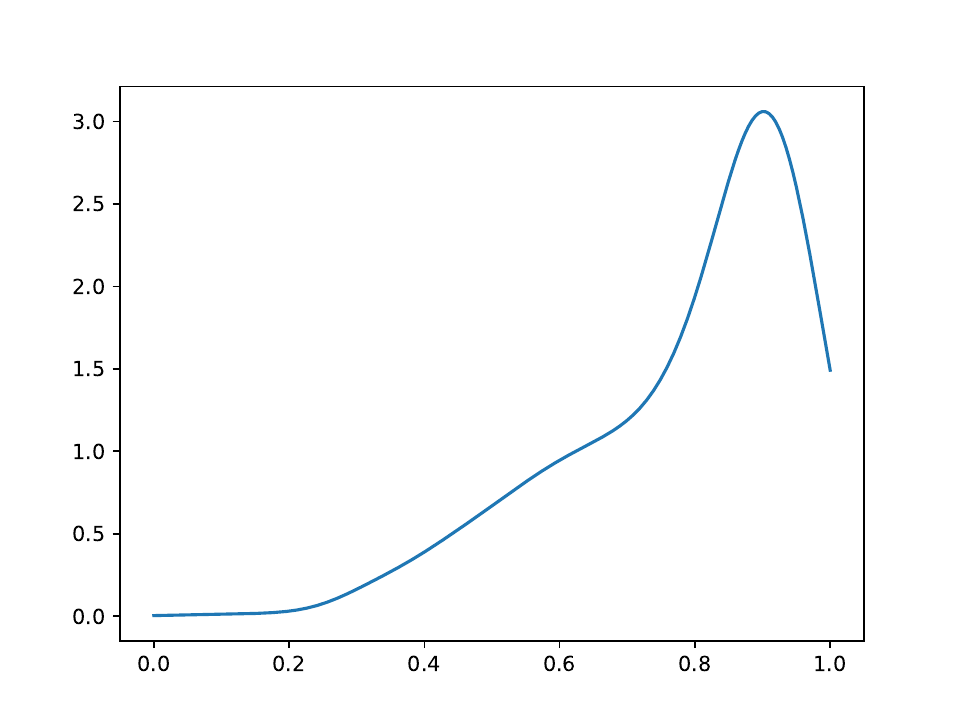}
    \caption{density for $\cL(C(\th_9, \th_2) \mid Y = y)$}
  \end{subfigure}
  \caption{Asparagus Soda ($m = 9$) vs.~Potato Log ($m = 2$)}
  \label{F:aspvspot}
\end{figure}

As is visually evident, Asparagus Soda's density is supported on a much wider
interval. In this way, the model acknowledges the possibility that Asparagus
Soda's actual long-term average score is lower than Potato Log's. The
probability that this is the case is $P(A(\th_9) < A(\th_2) \mid Y = y)$. If we
define $\Phi: \bR^{10 \times 500} \to \bR$ by $\Phi((x_{m\l})) = A(x_9) - A
(x_1)$, then we can use \eqref{agent} to obtain $P(A(\th_9) < A(\th_2) \mid Y =
y) \approx 0.049$. In other words, according to the model, there is a 95\%
chance that Asparagus Soda is a better player than Potato Log.

Now suppose the two of them had a contest in which they each played the game
once and the higher score wins. What is the probability that Asparagus Soda
would win this contest? If we define $C: \bR^{500} \times \bR^{500} \to \bR$ by
$C(x, y) = \sum_{\l > \l'} x_\l y_{\l'}$ and then $C(\th_9, \th_2)$ is the
(unknown) probability that Asparagus Soda beats Potato Log in this single-game
contest. The actual probability, given our observations $Y = y$, is then $E[C
(\th_9, \th_2) \mid Y = y]$, which, according to \eqref{agent}, is approximately
$0.786$. That is, Asparagus Soda has about a 79\% chance of beating Potato Log
in a contest involving a single play of the game. To visualize the uncertainty
around this probability, we can graph an approximate density for $\cL(C(\th_9,
\th_2) \mid Y = y)$. This is done in Figure \ref{F:aspvspot}(c). The graph shows
that although the conditional mean of $C(\th_9, \th_2)$ is about 79\%, the
conditional mode is much higher.

\paragraph{Asparagus Soda vs.~Pumpkins.}

We now turn our attention to comparing Asparagus Soda, who played only once, to
Pumpkins, who played four times. (See Figure \ref{F:aspvspum}.)

\begin{figure}
  \centering
  \begin{subfigure}[b]{0.45\linewidth}
    \includegraphics[width=\linewidth]{images/asparagus.pdf}
    \caption{density for $\cL(A(\th_9) \mid Y = y)$}
  \end{subfigure}
  \begin{subfigure}[b]{0.45\linewidth}
    \includegraphics[width=\linewidth]{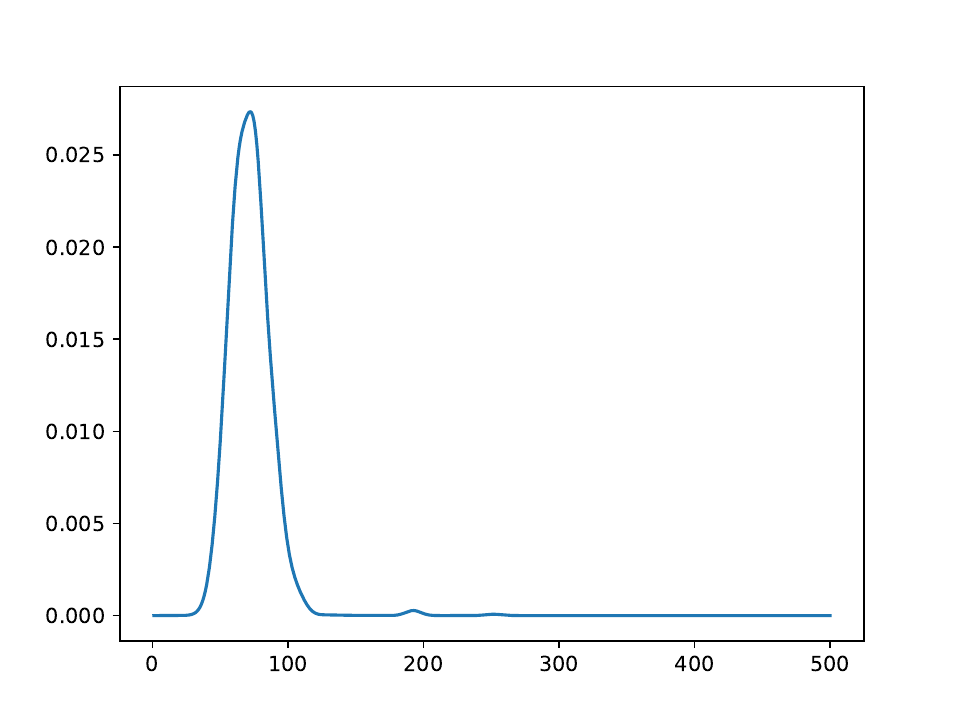}
    \caption{density for $\cL(A(\th_7) \mid Y = y)$}
  \end{subfigure}
  \begin{subfigure}[b]{0.45\linewidth}
    \includegraphics[width=\linewidth]{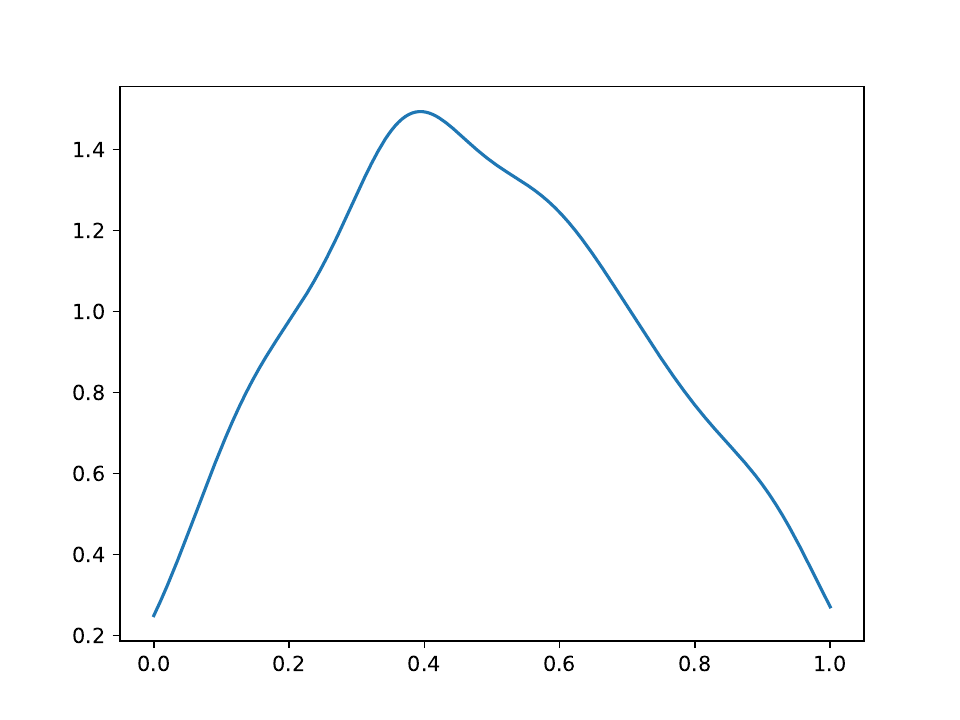}
    \caption{density for $\cL(C(\th_9, \th_7) \mid Y = y)$}
  \end{subfigure}
  \caption{Asparagus Soda ($m = 9$) vs.~Pumpkins ($m = 7$)}
  \label{F:aspvspum}
\end{figure}

Looking at Table \ref{Tb:scores3}, we see that Asparagus Soda corresponds to $m
= 9$ and Pumpkins corresponds to $m = 7$. Table \ref{Tb:lboard3} shows us that
$E[A(\th_9) \mid Y = y] \approx 67$ and $E[A(\th_7) \mid Y = y] \approx 72$. We
can visualize the model's uncertainty around Pumpkins' expected long-term
average by graphing an approximate density for $\cL(A(\th_7) \mid Y = y)$. This
is done in Figure \ref{F:aspvspum}(b).

Visually comparing this graph with the corresponding one for Asparagus Soda in
Figure \ref{F:aspvspum}(a), we see that the two long-term averages have
comparable degrees of uncertainty. Using \eqref{agent}, we have $P(A(\th_9) < A
(\th_2) \mid Y = y) \approx 0.625$, meaning there is a 62\% chance that Pumpkins
is the better player.

We can also consider a single-game contest between Asparagus Soda and Pumpkins.
As above, we can use \eqref{agent} to compute $E[C(\th_9, \th_7) \mid Y = y]
\approx 0.484$, meaning that Asparagus Soda has a 48\% chance of beating
Pumpkins in a single-game contest. To visualize the uncertainty around this
probability, we can graph an approximate density for $\cL(C(\th_9, \th_7) \mid Y
= y)$. (See Figure \ref{F:aspvspum}(c).)

\bibliographystyle{plain}
\bibliography{ndp-paper}

\end{document}